\definecolor{citation}{rgb}{0.2,0.58,0.2} 
\definecolor{formula}{rgb}{0,0,1}
\DeclarePairedDelimiterX\Set[1]{\lbrace}{\rbrace}%
 {  #1 }
\def\bign#1{\mathclose{\hbox{$\left#1\vbox to8.5\p@{}\right.\n@space$}}\mathopen{}}
\def\Bign#1{\mathclose{\hbox{$\left#1\vbox to8.5\p@{}\right.\n@space$}}\mathopen{}}
\def\biggn#1{\mathclose{\hbox{$\left#1\vbox to14.5\p@{}\right.\n@space$}}\mathopen{}}
\def\Biggn#1{\mathclose{\hbox{$\left#1\vbox to17.5\p@{}\right.\n@space$}}\mathopen{}}
\newcommand\blfootnote[1]{%
  \begingroup
  \renewcommand\thefootnote{}\footnote{#1}%
  \addtocounter{footnote}{-1}%
  \endgroup
}
\newenvironment{proof*}[1][\proofname]{\par
  \pushQED{\qed}%
  \normalfont \partopsep=\z@skip \topsep=\z@skip
  \trivlist
  \item[\hskip\labelsep
        \itshape
    #1\@addpunct{.}]\ignorespaces
}{%
  \popQED\endtrivlist\@endpefalse
}
\newcommand*{\rom}[1]{\expandafter\@slowromancap\romannumeral #1@}
\def\Xint#1{\mathchoice
{\XXint\displaystyle\textstyle{#1}}%
{\XXint\textstyle\scriptstyle{#1}}%
{\XXint\scriptstyle\scriptscriptstyle{#1}}%
{\XXint\scriptscriptstyle\scriptscriptstyle{#1}}%
\!\int}
\def\XXint#1#2#3{{\setbox0=\hbox{$#1{#2#3}{\int}$ }
\vcenter{\hbox{$#2#3$ }}\kern-.582\wd0}}
\def\dashint{\Xint-}
\newtheorem*{defin}{Definition}
\newtheorem{thm}{Theorem}[section]
\newtheorem{lem}[thm]{Lemma}
\newtheorem{rem}[thm]{Remark}
\newtheorem{prop}[thm]{Proposition}
\title{Higher H\"older regularity for nonlocal equations with irregular kernel}
\author{Simon Nowak}
\address{Universit\"at Bielefeld, Fakult\"at f\"ur Mathematik, Postfach 100131, D-33501 Bielefeld, Germany}
\email{simon.nowak@uni-bielefeld.de}
\keywords{Nonlocal operator, elliptic equations, H\"older regularity}
\subjclass[2010]{35R09, 35B65, 35D30, 47G20}
\begin{document}

\maketitle

\begin{abstract}
We study the higher H\"older regularity of local weak solutions to a class of nonlinear nonlocal elliptic equations with kernels that satisfy a mild continuity assumption. An interesting feature of our main result is that the obtained regularity is better than one might expect when considering corresponding results for local elliptic equations in divergence form with continuous coefficients. Therefore, in some sense our result can be considered to be of purely nonlocal type, following the trend of various such purely nonlocal phenomena observed in recent years. Our approach can be summarized as follows. First, we use certain test functions that involve discrete fractional derivatives in order to obtain higher H\"older regularity for homogeneous equations driven by a locally translation invariant kernel, while the global behaviour of the kernel is allowed to be more general. This enables us to deduce the desired regularity in the general case by an approximation argument.
\end{abstract}
\pagestyle{headings}

\section{Introduction} 
\subsection{Basic setting and main result}
In this work, we study the higher H\"older regularity of solutions to nonlinear nonlocal equations of the form \blfootnote{Supported by SFB 1283 of the German Research Foundation.}
\begin{equation} \label{nonlocaleq}
L_A^\Phi u =  f \text{ in } \Omega \subset \mathbb{R}^n
\end{equation}
driven by a kernel that potentially exhibits a very irregular behaviour. More precisely, by modifying an approach introduced in \cite{BLS}, we prove that so-called local weak solutions to such equations are locally H\"older continuous with some explicitly determined H\"older exponent.
Here $s \in (0,1)$, $\Omega \subset \mathbb{R}^n$ is a domain (= open set), $f:\mathbb{R}^n \to \mathbb{R}$ is a given function and 
$$ L_A^\Phi u(x) := 2 \lim_{\varepsilon \to 0} \int_{\mathbb{R}^n \setminus B_\varepsilon(x)} \frac{A(x,y)}{|x-y|^{n+2s}} \Phi(u(x)-u(y))dy, \quad x \in \Omega,$$
is a nonlocal operator. Throughout the paper, for simplicity we assume that $n>2s$. Furthermore, the function $A:\mathbb{R}^n \times \mathbb{R}^n \to \mathbb{R}$ is measurable and we assume that there exists a constant $\lambda \geq 1$ such that
\begin{equation} \label{eq1}
\lambda^{-1} \leq A(x,y) \leq \lambda \text{ for almost all } x,y \in \mathbb{R}^n.
\end{equation}
Moreover, we require $A$ to be symmetric, i.e.
\begin{equation} \label{symmetry}
A(x,y)=A(y,x) \text{ for almost all } x,y \in \mathbb{R}^n.
\end{equation}
We call such a function $A$ a kernel coefficient. We define $\mathcal{L}_0(\lambda)$ as the class of all such measurable kernel coefficients $A$ that satisfy the conditions (\ref{eq1}) and (\ref{symmetry}).
Moreover, in our main results $\Phi:\mathbb{R} \to \mathbb{R}$ is assumed to be a continuous function satisfying $\Phi(0)=0$ and the following Lipschitz continuity and monotonicity assumptions, namely
\begin{equation} \label{PhiLipschitz}
|\Phi(t)-\Phi(t^\prime)| \leq \lambda |t-t^\prime| \text{ for all } t,t^\prime \in \mathbb{R}
\end{equation}
and
\begin{equation} \label{PhiMonotone}
\left (\Phi(t)-\Phi(t^\prime) \right )(t-t^\prime) \geq \lambda^{-1} (t-t^\prime)^2 \text{ for all } t,t^\prime \in \mathbb{R},
\end{equation}
where for simplicity we use the same constant $\lambda \geq 1$ as in (\ref{eq1}). In particular, if $\Phi(t)=t$, then the operator $L_A^\Phi$ reduces to a linear nonlocal operator which is widely considered in the literature. The above conditions are for example satisfied by any $C^1$ function $\Phi$ with $\Phi(0)=0$ such that the image of the first derivative $\Phi^\prime$ of $\Phi$ is contained in $[\lambda^{-1},\lambda]$. \par
Define the fractional Sobolev space
$$W^{s,2}(\Omega):= \left \{u \in L^2(\Omega) \mathrel{\Big|} \int_{\Omega} \int_{\Omega} \frac{|u(x)-u(y)|^2}{|x-y|^{n+2s}}dy < \infty \right \}$$ and denote by $W^{s,2}_{loc}(\Omega)$ the set of all functions $u \in L^2_{loc}(\Omega)$ that belong to $W^{s,2}(\Omega^\prime)$ for any relatively compact open subset $\Omega^\prime$ of $\Omega$.
In addition, we define the tail space
$$L^1_{2s}(\mathbb{R}^n):= \left \{u \in L^1_{loc}(\mathbb{R}^n) \mathrel{\Big|} \int_{\mathbb{R}^n} \frac{|u(y)|}{1+|y|^{n+2s}}dy < \infty \right \}.$$
We remark that for any function $u \in L^1_{2s}(\mathbb{R}^n)$, the quantity 
$$ \int_{\mathbb{R}^n \setminus B_{R}(x_0)} \frac{|u(y)|}{|x_0-y|^{n+2s}}dy$$
is finite for all $R>0$, $x_0 \in \mathbb{R}^n$.
For all measurable functions $u,\varphi:\mathbb{R}^n \to \mathbb{R}$, we define
$$ \mathcal{E}_A^\Phi(u,\varphi) := \int_{\mathbb{R}^n} \int_{\mathbb{R}^n} \frac{A(x,y)}{|x-y|^{n+2s}} \Phi(u(x)-u(y))(\varphi(x)-\varphi(y))dydx,$$
provided that the above expression is well-defined and finite. This is for example the case if $u \in W^{s,2}_{loc}(\Omega) \cap L^1_{2s}(\mathbb{R}^n)$ and $\varphi \in W_c^{s,2}(\Omega)$, where by $W_c^{s,2}(\Omega)$ we denote the set of all functions that belong to $W^{s,2}(\Omega)$ and are compactly supported in $\Omega$. \par
In the literature, various types of weak solutions with varying generality are considered. In this paper, we adopt the following very general notion of local weak solutions which is for example used in \cite{BL} and \cite{BLS}.
\begin{defin}
	Let $f \in L^\frac{2n}{n+2s}_{loc}(\Omega)$. We say that $u \in W^{s,2}_{loc}(\Omega) \cap L^1_{2s}(\mathbb{R}^n)$ is a local weak solution of the equation $L_A^\Phi u =f$ in $\Omega$, if 
	\begin{equation} \label{weaksolx}
	\mathcal{E}_A^\Phi(u,\varphi) =  (f,\varphi)_{L^2(\Omega)} \quad \forall \varphi \in W_c^{s,2}(\Omega).
	\end{equation}
\end{defin}
We remark that the right-hand side of (\ref{weaksolx}) is finite by the fractional Sobolev embedding (cf. \cite[Theorem 6.5]{Hitch}).
It is noteworthy that the above notion of local weak solutions contains most other notions of weak solutions considered in the literature, such as the ones considered in e.g. \cite{finnish} or \cite{Me}.

In our first main result, we are going to impose an additional continuity assumption on $A$. Namely, we assume that there exists some small $\varepsilon>0$ such that
\begin{equation} \label{contkernel}
	\lim_{h \to 0} \sup_{\substack{_{x,y \in K}\\{|x-y| \leq \varepsilon}}} |A(x+h,y+h)-A(x,y)| =0 \quad \text{for any compact set } K \subset \Omega.
\end{equation}
In particular, the condition (\ref{contkernel}) is satisfied if $A$ is either continuous close to the diagonal in $\Omega \times \Omega$ or if $A$ belongs to the following subclass of $\mathcal{L}_0(\lambda)$ which plays an important role in our proof of the desired regularity.
\begin{defin}
	Let $\Omega$ be a domain and $\lambda\geq 1$. We say that a kernel coefficient $A_0 \in \mathcal{L}_0(\lambda)$ belongs to the class $\mathcal{L}_1(\lambda,\Omega)$, if there exists a measurable function $a:\mathbb{R}^n \to \mathbb{R}$ such that $A_0(x,y)=a(x-y)$ for all $x,y \in \Omega$.
\end{defin}
A kernel coefficient that belongs to the class $\mathcal{L}_1(\lambda,\Omega)$ can be thought of being translation invariant, but only inside of $\Omega$. We also call such a kernel coefficient locally translation invariant. We note that the condition (\ref{contkernel}) is also satisfied by some more general choices of kernel coefficients, for example if
$$A(x,y)=A^\prime(x,y)A_0(x,y),$$
where $A^\prime \in \mathcal{L}_0(\lambda^\frac{1}{2})$ is continuous near the diagonal in $\Omega \times \Omega$ and $A_0$ belongs to the class $\mathcal{L}_1(\lambda^\frac{1}{2},\Omega)$, but is not required to satisfy any continuity or smoothness assumption. Moreover, we stress that the condition given by (\ref{contkernel}) only restricts the behaviour of $A$ close to the diagonal in $\Omega \times \Omega$, while away from the diagonal in $\Omega \times \Omega$ and outside of $\Omega \times \Omega$ a more general behaviour is possible. \par
We are now in the position to state our main results.
\begin{thm} \label{C2sreg}
	Let $\Omega \subset \mathbb{R}^n$ be a domain, $s \in (0,1)$, $\lambda \geq 1$ and $f \in L^q_{loc}(\Omega)$ for some $q>\frac{n}{2s}$. Consider a kernel coefficient $A \in \mathcal{L}_0(\lambda)$ that satisfies the condition (\ref{contkernel}) for some $\varepsilon>0$ and suppose that $\Phi$ satisfies (\ref{PhiLipschitz}) and (\ref{PhiMonotone}) with respect to $\lambda$. Moreover, assume that $u \in W^{s,2}_{loc}(\Omega) \cap L^1_{2s}(\mathbb{R}^n)$ is a local weak solution of the equation $L_{A}^\Phi u = f$ in $\Omega$. Then for any $0<\alpha<\min \big \{2s-\frac{n}{q},1 \big\}$, we have $u \in C^\alpha_{loc}(\Omega)$. \newline Furthermore, for all $R>0$, $x_0 \in \Omega$ such that $B_R(x_0) \Subset \Omega$ and any $\sigma \in (0,1)$, we have
	\begin{equation} \label{Hoeldest}
	\begin{aligned}
	[u]_{C^\alpha(B_{\sigma R}(x_0))} \leq \frac{C}{R^\alpha} & \bigg ( R^{-\frac{n}{2}} ||u||_{L^2(B_R(x_0))} + R^{2s} \int_{\mathbb{R}^n \setminus B_{R}(x_0)} \frac{|u(y)|}{|x_0-y|^{n+2s}}dy \\ & + R^{2s-\frac{n}{q}} ||f||_{L^q(B_R(x_0))} \bigg ),
	\end{aligned}
	\end{equation}
	where $C=C(n,s,\lambda,\alpha,q,\sigma,\varepsilon)>0$ and 
	$$[u]_{C^{\alpha}(B_{\sigma R}(x_0))}:=\sup_{\substack{_{x,y \in B_{\sigma  R}(x_0)}\\{x \neq y}}} \frac{|u(x)-u(y)|}{|x-y|^{\alpha}}.$$
\end{thm}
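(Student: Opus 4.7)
The plan is to adapt and extend the method of Brasco--Lindgren--Schikorra \cite{BLS}, which was developed for purely translation-invariant kernels, by coupling their higher-regularity scheme with a perturbative freezing argument that exploits the local near-diagonal continuity condition (\ref{contkernel}).

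First, I would establish the conclusion in the special case where the kernel belongs to $\mathcal{L}_1(\lambda,\Omega)$ and $f\equiv 0$. Since $A_0(x,y)=a(x-y)$ on $\Omega$, translations commute with $L_{A_0}^{\Phi}$ for test functions supported well inside $\Omega$. This allows one to insert discrete difference quotients as test functions: concretely, plug $\varphi=\eta^2\,\tau_{-h}(\tau_h u-u)/|h|^{2\beta}$ (and iterated second differences) into the weak formulation, where $\eta$ is a suitable cut-off, and exchange the variable of integration to produce a Caccioppoli-type inequality controlling the Nikolskii seminorm of $u$ at level $s+\beta$ by that at level $s$. The Lipschitz and monotonicity assumptions (\ref{PhiLipschitz})--(\ref{PhiMonotone}) on $\Phi$ enter precisely here, since they allow one to estimate $\Phi(\tau_h u(x)-\tau_h u(y))-\Phi(u(x)-u(y))$ from above by an $\ell$-difference of $u$ and from below by its square. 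Iterating this gain in the Nikolskii--Besov scale and appealing to the fractional Sobolev embedding then yields, after finitely many steps, local $C^{\alpha}$ regularity for every $\alpha<\min\{2s,1\}$, together with an explicit tail-dependent estimate of the form (\ref{Hoeldest}).

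Next, I would treat the general case by a freezing/approximation argument. Fix $x_0$ with $B_R(x_0)\Subset\Omega$, let $r\le R/2$, and for each scale $r$ choose $A_0\in\mathcal{L}_1(\lambda,\Omega)$ obtained by averaging $A$ in the translation variable near the diagonal, so that $\sup_{|x-y|\le\varepsilon,\,x,y\in\overline{B_r(x_0)}}|A(x,y)-A_0(x,y)|$ is small (by (\ref{contkernel})). Solve the auxiliary Dirichlet problem $L_{A_0}^{\Phi}v=0$ in $B_r(x_0)$ with $v=u$ on $\mathbb{R}^n\setminus B_r(x_0)$. The translation-invariant step gives a Campanato-type decay for $v$ with the sharp Hölder exponent. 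Writing $w=u-v$, one tests the equation satisfied by $w$ against $w$ itself to obtain an energy estimate whose right-hand side carries two small parameters: the kernel defect $\|A-A_0\|_{L^\infty}$ near the diagonal, and the source contribution, which by Hölder's inequality and the fractional Sobolev embedding is bounded by $r^{2s-n/q}\|f\|_{L^q}$. Combining the decay for $v$ with the smallness of $w$ yields an excess-decay inequality on concentric balls of the form
\[
\mathrm{osc}_{B_{\rho}(x_0)} u \le C\bigl(\rho/r\bigr)^{\alpha}\,\mathcal{M}_r(u,f) + C\,r^{2s-n/q}\|f\|_{L^q(B_r)},
\]
and a standard iteration lemma then produces the claimed Hölder estimate with exponent $\alpha<\min\{2s-n/q,1\}$.

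The main obstacle, in my view, will be the translation-invariant step beyond the natural threshold $\alpha=s$: a naive use of first differences as test functions only returns one back to energy class, so one has to iterate through a Nikolskii hierarchy with carefully chosen second (and higher) difference test functions, while keeping track of the nonlinearity $\Phi$ which cannot simply be linearized. The approximation step is conceptually more standard but still delicate, because (\ref{contkernel}) only controls $A$ near the diagonal in $\Omega\times\Omega$: throughout the argument one must therefore keep the local and tail contributions rigorously separated, which is why the estimate (\ref{Hoeldest}) naturally splits into an $L^2$ interior piece, an $L^1_{2s}$ nonlocal tail, and an $L^q$ source term.
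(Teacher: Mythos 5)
Your overall strategy mirrors the paper's: first prove the result for homogeneous equations with a locally translation invariant kernel via a BLS-style difference-quotient scheme, then reach the general case by freezing the kernel and comparing against the solution of a frozen problem. However, there are two places where your sketch, as written, would not go through without the specific devices the paper uses.

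First, in the translation-invariant step you propose testing with $\varphi=\eta^2\,\tau_{-h}(\tau_h u-u)/|h|^{2\beta}$, which is \emph{linear} in the increment $\delta_h u$. Such a test function yields a Caccioppoli inequality for $\delta_h u/|h|^{\beta}$ in $L^2$, i.e.\ a gain in differentiability along the scale of $L^2$-based Nikolskii spaces. Since the paper works in the regime $n>2s$, this alone cannot produce $C^{\alpha}$ regularity for $\alpha$ up to $\min\{2s,1\}$: the Sobolev embedding from $W^{s+\beta,2}$ loses $n/2$ derivatives and you would remain far below the target. The paper (Proposition \ref{increment}) instead tests with $J_{q+1}(\delta_h u/|h|^{\vartheta})\,\eta^2$, a \emph{power} of the difference quotient, and this is the crucial point: it simultaneously upgrades the integrability exponent from $q$ to $q+1$ and the differentiability parameter from $(1+\vartheta q)/q$ to $(1+2s+\vartheta q)/(q+1)$, measured via second differences $\delta_h^2 u$ through Lemma \ref{embedding5}. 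Lemmas \ref{elementary1} and \ref{elementary2} (based on (\ref{PhiMonotone})) are precisely what makes the nonlinear test function compatible with the nonlinearity $\Phi$. You note that ``a naive use of first differences only returns one back to energy class,'' which is the right diagnosis, but ``second (and higher) difference test functions'' is not the fix — higher \emph{powers} of first differences are.

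Second, in the approximation step you test $w=u-v$ against itself to get an energy estimate, and then write down an excess-decay inequality for the \emph{oscillation} of $u$. But the energy estimate only controls $w$ in $W^{s,2}$ (equivalently, by fractional Friedrichs--Poincar\'e, in $L^2$). To pass to $L^\infty$-smallness of $u-v$, the paper (Lemma \ref{approxLinf}) combines the $L^2$-smallness with the uniform basic $L^\infty$ and $C^{\beta}$ estimates of Theorems \ref{finnish} and \ref{finnish1}, and then runs an Arzel\`a--Ascoli compactness/contradiction argument. Without this step, the excess-decay inequality in $\mathrm{osc}$ that you propose is not justified. Relatedly, the paper's iteration in Proposition \ref{nonhomreg1} does not use a Campanato excess decay on its own: it propagates \emph{both} a sup-norm bound and a rescaled tail bound $\int_{\mathbb{R}^n\setminus B_1}|w_k(y)|\,|y|^{-n-2s}\,dy\le M_0$ inductively (display (\ref{sufficientest})). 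Controlling the tail along the rescaling is essential — if it grew with $k$, the frozen estimate (\ref{C2sreg5}) could not be applied at every scale — and your sketch acknowledges the need to track tails but does not give a mechanism for keeping them uniformly bounded through the iteration.

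Your freezing construction (averaging $A$ near the diagonal) is in the same spirit as the paper's choice $A_z(x,y)=\tfrac12\big(A(x-y+z,z)+A(y-x+z,z)\big)$ and is fine. The splitting of the final estimate into $L^2$ interior, $L^1_{2s}$ tail, and $L^q$ source terms is correctly anticipated.
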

If we focus on obtaining H\"older regularity for some fixed exponent $0<\alpha<\min \big \{2s-\frac{n}{q},1 \big\}$, then we can slightly weaken the assumption on $A$ as follows. Roughly speaking, in this case it is enough to require that $A$ is locally close enough to being translation invariant, while the condition (\ref{contkernel}) essentially means that $A$ is locally arbitrarily close to being translation invariant. This slight "room for error" is typical when one uses approximation techniques in order to obtain regularity results, see for example \cite{CSa}.
\begin{thm} \label{C2srega}
Let $\Omega \subset \mathbb{R}^n$ be a domain, $s \in (0,1)$, $\lambda \geq 1$ and $f \in L^q_{loc}(\Omega)$ for some $q>\frac{n}{2s}$. Consider a kernel coefficient $A \in \mathcal{L}_0(\lambda)$ and suppose that $\Phi$ satisfies (\ref{PhiLipschitz}) and (\ref{PhiMonotone}) with respect to $\lambda$. Fix some $0<\alpha<\min \big \{2s-\frac{n}{q},1 \big\}$. Then there exists some small enough $\delta=\delta(\alpha,n,s,\lambda,q)>0$, such that if for any $z \in \Omega$, there exists some small enough radius $r_{z}>0$ and some $A_{z} \in \mathcal{L}^1(\lambda,B_{r_{z}}(z))$ such that $$||A-A_{z}||_{L^\infty(B_{r_{z}}(z) \times B_{r_{z}}(z))} \leq \delta,$$
then for any local weak solution $u \in W^{s,2}_{loc}(\Omega) \cap L^1_{2s}(\mathbb{R}^n)$ of the equation $L_{A}^\Phi u = f$ in $\Omega$, we have $u \in C^\alpha_{loc}(\Omega)$. Moreover, for all $R>0$, $x_0 \in \Omega$ such that $B_R(x_0) \Subset \Omega$ and any $\sigma \in (0,1)$, $u$ satisfies the estimate (\ref{Hoeldest}) with respect to $\alpha$ and some constant $C=C(n,s,\lambda,\alpha,q,\sigma,\{r_z\}_{z \in \Omega})>0$.
\end{thm}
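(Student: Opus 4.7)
My plan is a Caffarelli--Silvestre type perturbation argument reducing Theorem \ref{C2srega} to Theorem \ref{C2sreg}. The essential observation is that any locally translation invariant $A_z \in \mathcal{L}^1(\lambda, B_{r_z}(z))$ satisfies the continuity condition (\ref{contkernel}) inside $B_{r_z}(z)$ trivially (with vanishing modulus), so Theorem \ref{C2sreg} applies to solutions of $L_{A_z}^\Phi w = g$ on any ball compactly contained in $B_{r_z}(z)$, in fact for any Hölder exponent in the admissible range. The strategy is to freeze the kernel near each $z \in \Omega$, compare $u$ to a solution of the frozen homogeneous equation, and iterate the resulting decay on dyadic scales.

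Fix $z \in \Omega$ and a small radius $r \le r_z$; after rescaling to $B_1(0)$ and normalizing so that the right-hand side of (\ref{Hoeldest}) at scale $r$ equals $1$, the central intermediate result is an approximation lemma: for every $\eta > 0$ there is $\delta = \delta(\eta, n, s, \lambda, q) > 0$ such that if $\|A - A_z\|_{L^\infty(B_1 \times B_1)} \le \delta$, then the solution $v$ of the Dirichlet problem
\[
L_{A_z}^\Phi v = 0 \text{ in } B_{1/2}, \qquad v = u \text{ on } \mathbb{R}^n \setminus B_{1/2},
\]
satisfies $\|u - v\|_{L^2(B_{1/2})} \le \eta$. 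I would prove this by testing the difference of the weak formulations against $\varphi = u - v$, splitting $A = A_z + (A - A_z)$, using the monotonicity (\ref{PhiMonotone}) on the $A_z$ part to extract the $W^{s,2}$-seminorm of $u-v$, using the Lipschitz bound (\ref{PhiLipschitz}) on the remainder (which gains a factor $\delta$ on the diagonal block $B_{1/2} \times B_1$ and is controlled on the tail block $B_{1/2} \times (\mathbb{R}^n \setminus B_1)$ by the bounded weight $|x-y|^{-n-2s}$ against the tail of $u$), and finally absorbing the $f$-contribution via the fractional Sobolev embedding together with a standard Caccioppoli inequality for $u$.

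Pick an auxiliary exponent $\alpha' \in (\alpha, \min\{2s, 1\})$, admissible for $v$ since the frozen equation has zero right-hand side. Theorem \ref{C2sreg} applied to $v$ on $B_{1/2}$ yields $[v]_{C^{\alpha'}(B_{1/4})} \le M$ with $M$ depending only on the admissible parameters, since the quantities entering (\ref{Hoeldest}) for $v$ are controlled by the normalization of $u$ through the Dirichlet matching. Combined with the approximation bound, this gives, for $\rho_0 \in (0, 1/4)$,
\[
\inf_{c \in \mathbb{R}} \rho_0^{-n/2} \|u - c\|_{L^2(B_{\rho_0})} \le C M \rho_0^{\alpha'} + C \rho_0^{-n/2} \eta.
\]
Choosing first $\rho_0$ small so that $C M \rho_0^{\alpha' - \alpha} \le 1/2$, and then $\eta$ (hence $\delta$) small so that $C \rho_0^{-n/2} \eta \le \rho_0^\alpha / 2$, one obtains the one-step Campanato decay $\inf_c \rho_0^{-n/2}\|u - c\|_{L^2(B_{\rho_0})} \le \rho_0^\alpha$. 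Iterating on the dyadic scales $\rho_0^k$ produces the full Campanato characterization of $C^\alpha$, which rescales back to (\ref{Hoeldest}).

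The main obstacle I anticipate is propagating the tail control through this iteration. The rescaled equation at each scale $\rho_0^k$ carries as data the global tail $\int_{\mathbb{R}^n \setminus B_{\rho_0^k}}|u(y)|/|y|^{n+2s}\,dy$, and I need to split it into contributions from the annuli $B_{\rho_0^j} \setminus B_{\rho_0^{j+1}}$ for $j < k$ and from $\mathbb{R}^n \setminus B_1$, using the inductive Campanato bounds to control the annular contributions uniformly in $k$. This is standard but delicate in nonlocal perturbation arguments, and it is precisely what ensures that a single threshold $\delta = \delta(\alpha, n, s, \lambda, q)$, independent of $z$ and of the scale, suffices at every step and hence yields the dependence of $\delta$ claimed in the statement.
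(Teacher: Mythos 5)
Your plan is essentially the same approach the paper takes: freeze the kernel near a point, compare $u$ to the solution $v$ of the homogeneous equation with the frozen (locally translation invariant) coefficient, feed the higher H\"older regularity of $v$ back into a scale-by-scale iteration with a small-loss exponent $\alpha'>\alpha$, and track the nonlocal tail through the iteration. The paper's Lemma \ref{approxLinf}, Proposition \ref{nonhomreg1} (Step 1: decay at the origin, Step 2: propagation to a ball), and the covering/rescaling arguments in Theorem \ref{C2sregy} implement exactly this scheme.

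Two differences are worth flagging. First, a bookkeeping point: you invoke Theorem \ref{C2sreg} for the frozen solution, but in the paper's logical order \ref{C2sreg} is \emph{deduced from} \ref{C2srega}, so that reference would be circular; the result you actually need is Theorem \ref{modC2sreg} (the directly-proved H\"older estimate for homogeneous equations with $A\in\mathcal{L}_1$), and your observation that the frozen kernel trivially satisfies (\ref{contkernel}) is really just the observation that it falls under \ref{modC2sreg}. Second, and more substantively, the paper's approximation Lemma \ref{approxLinf} provides an $L^\infty$ estimate $\|u-v\|_{L^\infty(B_{3/4})}\le\tau$ (obtained from the $L^2$ energy estimate by a compactness/Arzel\`a--Ascoli argument using the basic interior H\"older regularity of Theorems \ref{finnish}--\ref{finnish1}), and the whole induction in Proposition \ref{nonhomreg1} is carried in $L^\infty$: the inductive hypothesis (\ref{sufficientest}) is a pointwise oscillation bound together with a tail bound, and (\ref{D15}) gives a pointwise estimate which is then integrated against the tail weight. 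Your proposal instead carries only an $L^2$ Campanato excess. That is workable in principle, but it makes the tail propagation you correctly flag as the main obstacle noticeably harder: at each scale you must control $\int_{B_{\rho_0^{j}}\setminus B_{\rho_0^{j+1}}}|u(y)-c_j|\,|y|^{-n-2s}\,dy$ from a mean-square bound (Cauchy--Schwarz loses a factor you then need to absorb), and you must also track the drift $|c_{j+1}-c_j|$ of the Campanato constants. The paper's $L^\infty$ approximation makes this clean, at the cost of the compactness step; if you wish to stay with a purely quantitative $L^2$ energy lemma, you should spell out this tail bookkeeping, since it is precisely where the argument can break. Finally, you should note explicitly that the pointwise decay at the origin must be translated to all centers (this is the paper's Step 2, a simple translation/rescaling together with the covering argument of Theorem \ref{C2sregy}) before (\ref{Hoeldest}) can be read off.
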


\begin{rem} \normalfont
In order to provide some context, let us briefly consider the local elliptic equation in divergence form of the type
\begin{equation} \label{localeq}
\textnormal{div}(B \nabla u)=0 \quad \text{in } \Omega,
\end{equation}
where the matrix of coefficients $B=\{b_{ij}\}_{i,j=1}^n$ is assumed to be uniformly elliptic and bounded. The equation (\ref{localeq}) can in some sense be thought of as a local analogue of the nonlocal equation (\ref{nonlocaleq}) corresponding to the limit case $s=1$. A classical regularity result states that if the coefficients $b_{ij}$ are continuous, then weak solutions $u \in W^{1,2}_{loc}(\Omega)$ of the equation (\ref{localeq}) are locally H\"older continuous for any exponent $\alpha \in (0,1)$, see for example \cite[Corollary 5.18]{Giaq}. Heuristically, one might therefore expect that the optimal regularity in the setting of nonlocal equations with continuous kernel coefficient should not exceed $C^s$ regularity. Nevertheless, Theorem \ref{C2sreg} in particular shows that weak solutions to nonlocal equations of the type $L_A^\Phi u=0$ in $\Omega$ are locally $C^\alpha$ for any $0<\alpha<\min \big \{2s,1 \big\}$ whenever $A \in \mathcal{L}_0(\lambda)$ is continuous, exceeding $C^s$ regularity. In particular, in the case when $s \geq 1/2$, weak solutions to homogeneous nonlocal equations with continuous kernel coefficients enjoy the same amount of H\"older regularity as weak solutions to corresponding local equations with continuous coefficients, despite the fact that the order of such nonlocal equations is lower. \par Such at first sight unexpected additional regularity is however not untypical in the context of nonlocal equations and has been observed in various previous works in the context of Sobolev regularity. For example, in \cite{selfimpro} and \cite{Schikorra} it is shown that already in the setting of a general kernel coefficient $A \in \mathcal{L}_0(\lambda)$, weak solutions to nonlocal equations of the type (\ref{nonlocaleq}) are slightly higher differentiable than initially assumed along the scale of Sobolev spaces, which is a phenomenon not shared by local elliptic equations of the type (\ref{localeq}) with coefficients that are merely measurable. \newline
Another result in this direction was recently proved in \cite{MSY}, where the authors in particular show that if $A \in \mathcal{L}_0(\lambda)$ is H\"older continuous with some arbitrary H\"older exponent and $\Phi(t)=t$, then weak solutions of the equation $L_A^\Phi u=0$ in $\mathbb{R}^n$ belong to $W^{\alpha,p}_{loc}(\mathbb{R}^n)$ for any $\alpha<\min \big \{2s,1 \big\}$ and any $2 \leq p<\infty$, while for local equations of the type (\ref{localeq}) with corresponding H\"older continuous coefficients no comparable gain in differentiability is achievable. In particular, by the Sobolev embedding this result implies that such weak solutions belong to $C^\alpha_{loc}(\mathbb{R}^n)$ for any $0<\alpha<\min \big \{2s,1 \big\}$, which is consistent with our main result. Our main result shows that this amount of higher H\"older regularity is also enjoyed by local weak solutions of possibly nonlinear equations driven by kernel coefficients of class $\mathcal{L}_0(\lambda)$ that satisfy the continuity assumption (\ref{contkernel}).
\end{rem}
\begin{rem} \label{Sobrem} \normalfont
Besides being interesting for its own sake, one of our main motivations is that Theorem \ref{C2sreg} also has some interesting potential applications concerning the Sobolev regularity of solutions to nonlocal equations. A first such application can briefly be summarized as follows. In \cite{Me}, in the main result it is assumed that $A$ is globally translation invariant, i.e. that $A$ belongs to the class $\mathcal{L}_1(\lambda,\mathbb{R}^n)$. However, this assumption is only used in order to ensure that the H\"older estimate (\ref{Hoeldest}) from Theorem \ref{C2sreg} is valid, which up to this point was only known for translation invariant kernels, cf. \cite[Theorem 4.6]{Me}. Since otherwise the proofs in \cite{Me} only rely on the properties (\ref{eq1}) and (\ref{symmetry}) of $A$, from Theorem \ref{C2sreg} above we conclude that the statement of \cite[Theorem 1.1]{Me} is also true for general kernel coefficients $A$ of class $\mathcal{L}_0(\lambda)$ that satisfy the condition (\ref{contkernel}).
\end{rem}
\subsection{Approach and previous results}
As mentioned, our approach is strongly influenced by an approach introduced in \cite{BLS}, where a similar result concerning higher H\"older regularity is proved for the fractional $p$-Laplacian in the superquadratic case when $p \geq 2$. Although for simplicity we restrict ourselves to the quadratic case when $p=2$, in contrast to \cite{BLS} we deal with a nonlinearity already in the quadratic setting and most importantly, we also treat equations driven by general kernel coefficients $A$ that satisfy the mild assumption (\ref{contkernel}), while in \cite{BLS} only the case when $A \equiv 1$ is considered. Also, we stress that by combining our techniques with some more techniques from \cite{BLS}, our approach could be modified in order to treat also nonlinearities with nonlinear growth of the type $\Phi(t) \approx t^{p-1}$. However, since the additional difficulties arising from such a generalization were already dealt with in \cite{BLS} and we instead want to focus on the difficulties arising from considering equations with general coefficients, we decided not to pursue this direction in this work. \par Let us briefly summarize our approach, highlighting the differences to the one used in \cite{BLS}. First, we prove the higher H\"older regularity for homogeneous equations driven by a locally translation invariant kernel coefficient, see section 3. As in \cite{BLS}, the main idea in this case is to test the equation with certain monotone power functions of discrete fractional derivatives leading to an incremental higher integrability and differentiability result on the scale of certain Besov-type spaces. However, in our setting we also need to carefully use the local translation invariance and the bounds imposed on $A$, and also the assumptions (\ref{PhiLipschitz}) and (\ref{PhiMonotone}) imposed on $\Phi$ in order to overcome the difficulties that arise due to the presence of the general kernel and the general type of nonlinearity. Moreover, we remark that restricting ourselves to equations with linear growth has the advantage that the proof of this incremental higher regularity result simplifies quite substantially in some other respects. The obtained incremental gain in regularity is then iterated, in order for the desired H\"older regularity to follow by embedding. \par
In section 4, we then treat the general case of inhomogeneous equations driven by a kernel coefficient satisfying the condition (\ref{contkernel}) by an approximation argument. In the corresponding approximation argument applied in \cite{BLS}, the solution is approximated by a solution of a corresponding equation with zero right-hand side, while the nonlocal operator driving the equation is left unchanged. In order to be able to treat equations with a general kernel coefficient $A$ of class $\mathcal{L}_0(\lambda)$ that satisfies only the continuity assumption (\ref{contkernel}), in addition to freezing the right-hand side, we also need to locally replace $A$ by a corresponding locally translation invariant kernel coefficient, which is possible in view of the assumption (\ref{contkernel}).
Since by the first part of the proof the desired H\"older regularity is already known for solutions to equations with locally translation invariant kernel coefficients, we can then transfer this regularity from the approximate solution to the solution itself. In other words, in some sense we locally freeze the coefficient, in order to transfer the regularity from an equation for which the higher regularity can be proved directly to an equation driven by a less regular kernel. This strategy can be thought of as a nonlocal counterpart of corresponding techniques widely used in the study of higher regularity for local elliptic equations, although we stress that in our nonlocal setting we have to overcome a number of additional difficulties which are not present in the local setting in order to execute such an approximation argument successfully. Moreover, we believe that just like in the local setting, the approximation techniques developed in this paper are flexible enough in order to be adaptable to also proving other higher regularity results for nonlocal equations similar to (\ref{nonlocaleq}). \par
Regarding other related regularity results, in \cite{Fall} a similar result is proved in the linear case when $\Phi(t)=t$, where $A$ is required to be locally close enough to $b \left (\frac{x-y}{|x-y|} \right )$ for some even function $b:S^{n-1} \to \mathbb{R}$ that is bounded between two positive constants, which is contained in our assumption on $A$ in Theorem \ref{C2srega}. More results concerning higher H\"older regularity for various types of nonlocal equations are for instance contained in \cite{Fall1}, \cite{NonlocalGeneral}, \cite{CSa}, \cite{Stinga} and \cite{Grubb}. Furthermore, results regarding basic H\"older regularity for nonlocal equations are proved for example in \cite{finnish}, \cite{Kassmann}, \cite{Silvestre} and \cite{Peral}, while results concerning Sobolev regularity can be found for example in \cite{selfimpro}, \cite{Schikorra}, \cite{BL}, \cite{Cozzi}, \cite{MSY}, \cite{DongKim} and \cite{Me}. Finally, for some regularity results concerning nonlocal equations similar to (\ref{nonlocaleq}) in the more general setting of measure data, we refer to \cite{mdata}.

\section{Preliminaries}
\subsection{Some notation}
Let us fix some notation which we use throughout the paper. By $C$, $c$, $C_i$ and $c_i$, $i \in \mathbb{N}_0$, we always denote positive constants, while dependences on parameters of the constants will be shown in parentheses. As usual, by
$$ B_r(x_0):= \{x \in \mathbb{R}^n \mid |x-x_0|<r \}, \quad \overline B_r(x_0):= \{x \in \mathbb{R}^n \mid |x-x_0| \leq r \}$$
we denote the open and closed ball with center $x_0 \in \mathbb{R}^n$ and radius $r>0$, respectively. Moreover, if $E \subset \mathbb{R}^n$ is measurable, then by $|E|$ we denote the $n$-dimensional Lebesgue-measure of $E$. If $0<|E|<\infty$, then for any $u \in L^1(E)$ we define
$$ \overline u_{E}:= \dashint_{E} u(x)dx := \frac{1}{|E|} \int_{E} u(x)dx.$$
Next, for any $p \in (1,\infty)$ we define the function $J_p:\mathbb{R} \to \mathbb{R}$ by
$$J_p(t):=|t|^{p-2}t.$$
Moreover, for any measurable function $\psi:\mathbb{R}^n \to \mathbb{R}$ and any $h \in \mathbb{R}^n$, we define
$$ \psi_h(x):=\psi(x+h), \quad \delta_h \psi(x):=\psi_h(x)-\psi(x), \quad \delta_h^2(x):= \delta_h (\delta_h \psi(x))=\psi_{2h}(x)+\psi(x)-2\psi_h(x).$$

\subsection{The nonlocal tail}
In this section, for convenience we state and proof the following two simple results concerning the nonlocal tail of a function which we use frequently throughout the paper.
\begin{lem} \label{tailestz}
	Let $s \in (0,1)$ and $0<r<R$. Then for any $x \in \overline B_r$ and any $u \in L^1_{2s}(\mathbb{R}^n)$, we have
	$$ \int_{\mathbb{R}^n \setminus B_R} \frac{|u(y)|}{|x-y|^{n+2s}}dy \leq \left (\frac{R}{R-r} \right )^{n+2s} \int_{\mathbb{R}^n \setminus B_R} \frac{|u(y)|}{|y|^{n+2s}}dy.$$
\end{lem}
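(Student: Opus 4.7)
The plan is to reduce the inequality to a pointwise comparison between $|x-y|$ and $|y|$ for $y \in \mathbb{R}^n \setminus B_R$ and $x \in \overline B_r$, and then integrate. Since $r < R$, both $|x|$ and $|y|$ are separated by a fixed gap, so we expect $|x-y|$ to be comparable to $|y|$ with the sharp constant $(R-r)/R$.

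More precisely, the first step is to apply the reverse triangle inequality. For any $y$ with $|y| \geq R$ and any $x \in \overline B_r$, we have $|x-y| \geq |y|-|x| \geq |y|-r$. The second step is to observe that, since $|y| \geq R$, we have
\begin{equation*}
|y|-r = |y|\left(1-\frac{r}{|y|}\right) \geq |y|\left(1-\frac{r}{R}\right) = \frac{R-r}{R}|y|.
\end{equation*}
Combining the two inequalities yields the pointwise bound
\begin{equation*}
\frac{1}{|x-y|^{n+2s}} \leq \left(\frac{R}{R-r}\right)^{n+2s} \frac{1}{|y|^{n+2s}}
\end{equation*}
valid for all $x \in \overline B_r$ and $y \in \mathbb{R}^n \setminus B_R$.

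The third and final step is to multiply by $|u(y)|$ and integrate over $\mathbb{R}^n \setminus B_R$ to obtain the claimed inequality. Finiteness of the right-hand side is guaranteed by the assumption $u \in L^1_{2s}(\mathbb{R}^n)$, as noted in the introduction. There is no real obstacle here; the only subtle point is the choice of which side of the triangle inequality to use (one needs the lower bound on $|x-y|$) and noticing that the worst case $|y|=R$ delivers exactly the sharp factor $(R-r)/R$.
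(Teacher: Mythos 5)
Your proof is correct and follows essentially the same approach as the paper: a pointwise comparison of $|x-y|$ and $|y|$ via the triangle inequality, yielding the bound $|y| \le \frac{R}{R-r}|x-y|$, followed by integration. The only cosmetic difference is that the paper writes $|y| \le |x-y| + |x|$ and bounds the ratio $|x|/|x-y|$ from above, whereas you start from $|x-y| \ge |y|-r$ and factor out $|y|$; the resulting pointwise estimate is identical.
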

\begin{proof}
	The claim follows directly from the observation that for any $x \in \overline B_r$ and any $y \in \mathbb{R}^n \setminus B_R$, we have
	$$ |y| \leq |x-y|+|x| = |x-y| \left ( 1+\frac{|x|}{|x-y|} \right ) \leq |x-y| \left ( 1+\frac{r}{R-r} \right ) = \frac{R}{R-r} |x-y|. $$
\end{proof}

\begin{lem} \label{tail}
	Let $s \in (0,1)$, $r>0$ and $x_0 \in B_1$ such that $B_r(x_0) \subset B_1$. Then for any $u \in L^1_{2s}(\mathbb{R}^n)$, we have
	$$ \int_{\mathbb{R}^n \setminus B_r(x_0)} \frac{|u(y)|}{|x_0-y|^{n+2s}}dy \leq r^{-(n+2s)} \left ( ||u||_{L^1(B_1)}+ \int_{\mathbb{R}^n \setminus B_1} \frac{|u(y)|}{|y|^{n+2s}}dy \right ).$$
\end{lem}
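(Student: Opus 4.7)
The plan is to split the domain of integration into the annular region $B_1 \setminus B_r(x_0)$ (on which the weight $|x_0-y|^{-(n+2s)}$ is simply bounded by $r^{-(n+2s)}$) and the exterior region $\mathbb{R}^n \setminus B_1$ (on which we compare $|x_0-y|$ with $|y|$ using the previous lemma).

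For the inner piece, I would observe that for any $y \in B_1 \setminus B_r(x_0)$ we have $|x_0-y| \geq r$, so that
$$\int_{B_1 \setminus B_r(x_0)} \frac{|u(y)|}{|x_0-y|^{n+2s}}\,dy \leq r^{-(n+2s)} \int_{B_1} |u(y)|\,dy = r^{-(n+2s)} \|u\|_{L^1(B_1)}.$$
This is immediate and needs no additional ingredient.

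For the outer piece, the key observation is that the inclusion $B_r(x_0) \subset B_1$ forces $|x_0| \leq 1-r$, so $x_0 \in \overline{B}_{1-r}$. Applying Lemma \ref{tailestz} with the roles of the parameters being $R = 1$ and $1-r$ in place of $r$, we obtain
$$\int_{\mathbb{R}^n \setminus B_1} \frac{|u(y)|}{|x_0-y|^{n+2s}}\,dy \leq \left(\frac{1}{1-(1-r)}\right)^{n+2s}\!\!\int_{\mathbb{R}^n \setminus B_1} \frac{|u(y)|}{|y|^{n+2s}}\,dy = r^{-(n+2s)}\!\!\int_{\mathbb{R}^n \setminus B_1} \frac{|u(y)|}{|y|^{n+2s}}\,dy.$$

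Adding the two bounds yields exactly the claimed inequality. There is no real obstacle here; the only subtle point is recognising that $B_r(x_0) \subset B_1$ gives $x_0 \in \overline{B}_{1-r}$, which is what allows Lemma \ref{tailestz} to produce precisely the factor $r^{-(n+2s)}$ needed to match the inner contribution.
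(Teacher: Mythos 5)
Your proposal is correct and follows essentially the same route as the paper: split the integral over $B_1 \setminus B_r(x_0)$ and $\mathbb{R}^n \setminus B_1$, bound the inner piece using $|x_0-y|\geq r$, and bound the outer piece by applying Lemma \ref{tailestz} with $R=1$ after observing that $B_r(x_0)\subset B_1$ forces $x_0 \in \overline B_{1-r}$. No substantive difference.
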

\begin{proof}
	Since by assumption $x_0 \in \overline B_{1-r}$, with the help of Lemma \ref{tailestz} we obtain
	\begin{align*}
	\int_{\mathbb{R}^n \setminus B_r(x_0)} \frac{|u(y)|}{|x_0-y|^{n+2s}}dy = & \int_{B_1 \setminus B_r(x_0)} \frac{|u(y)|}{|x_0-y|^{n+2s}}dy + \int_{\mathbb{R}^n \setminus B_1} \frac{|u(y)|}{|x_0-y|^{n+2s}}dy \\
	\leq & r^{-(n+2s)} ||u||_{L^1(B_1)} + r^{-(n+2s)} \int_{\mathbb{R}^n \setminus B_1} \frac{|u(y)|}{|y|^{n+2s}}dy,
	\end{align*}
	which finishes the proof.
\end{proof}

\subsection{The fractional Sobolev space $W^{s,2}$}
First of all, for notational convenience for any domain $\Omega \subset \mathbb{R}^n$ we define the seminorm associated to the space $W^{s,2}(\Omega)$ by
$$[u]_{W^{s,2}(\Omega)} := \left (\int_{\Omega} \int_{\Omega} \frac{|u(x)-u(y)|^2}{|x-y|^{n+2s}}dydx \right )^{1/2}, $$
so that we have
$$W^{s,2}(\Omega)= \left \{u \in L^2(\Omega) \mid [u]_{W^{s,2}(\Omega)} < \infty \right \}.$$
Moreover, we define the space 
$$W^{s,2}_0(\Omega):= \left \{u \in W^{s,2}(\mathbb{R}^n) \mid u \equiv 0 \text{ in } \mathbb{R}^n \setminus \Omega \right \}.$$
The following Poincar\'e-type inequality associated to the space $W^{s,2}$ will frequently be used throughout the paper.
\begin{lem} \label{Friedrichs} (fractional Friedrichs-Poincar\'e inequality)
	Let $s \in(0,1)$ and consider a bounded domain $\Omega \subset \mathbb{R}^n$. For any $u \in W^{s,2}_0(\Omega)$, we have
	\begin{equation} \label{FPI4}
	\int_{\Omega} |u(x)|^2 dx \leq C |\Omega|^{\frac{2s}{n}} \int_{\mathbb{R}^n} \int_{\mathbb{R}^n} \frac{|u(x)-u(y)|^2}{|x-y|^{n+2s}}dydx,
	\end{equation}
	where $C=C(n,s)>0$.
\end{lem}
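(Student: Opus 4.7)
The plan is to reduce the claim to the fractional Sobolev embedding $W^{s,2}(\mathbb{R}^n) \hookrightarrow L^{2^*_s}(\mathbb{R}^n)$, where $2^*_s := \tfrac{2n}{n-2s}$ (recall the standing assumption $n>2s$), together with Hölder's inequality on the bounded set $\Omega$. This is the most natural route since the exponent $2s/n$ on $|\Omega|$ is precisely the Hölder deficit between $L^2$ and $L^{2^*_s}$.

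First, since $u \in W^{s,2}_0(\Omega)$ vanishes identically outside $\Omega$, the full $\mathbb{R}^n$-seminorm of $u$ coincides with the Gagliardo seminorm that appears on the right-hand side of \eqref{FPI4}, and $\|u\|_{L^p(\Omega)}=\|u\|_{L^p(\mathbb{R}^n)}$ for every $p$. I would then invoke the fractional Sobolev embedding (for instance \cite[Theorem 6.5]{Hitch}) to obtain a constant $C=C(n,s)>0$ such that
\begin{equation*}
\|u\|_{L^{2^*_s}(\mathbb{R}^n)}^{2} \leq C \int_{\mathbb{R}^n}\int_{\mathbb{R}^n} \frac{|u(x)-u(y)|^2}{|x-y|^{n+2s}}\,dy\,dx.
\end{equation*}

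Next, I would apply Hölder's inequality on $\Omega$ with conjugate exponents $p=\tfrac{2^*_s}{2}=\tfrac{n}{n-2s}$ and $p'=\tfrac{n}{2s}$, obtaining
\begin{equation*}
\int_{\Omega} |u(x)|^2\,dx \leq \left(\int_{\Omega} |u(x)|^{2^*_s}\,dx\right)^{2/2^*_s} |\Omega|^{1-2/2^*_s} = \|u\|_{L^{2^*_s}(\Omega)}^{2}\, |\Omega|^{2s/n},
\end{equation*}
using the identity $1-\tfrac{2}{2^*_s}=\tfrac{2s}{n}$. Combining this with the Sobolev embedding above (and the fact that the $L^{2^*_s}$-norms on $\Omega$ and on $\mathbb{R}^n$ agree since $u\equiv 0$ outside $\Omega$) yields \eqref{FPI4} with the claimed constant $C=C(n,s)$.

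I do not expect any serious obstacle here: the only substantive input is the fractional Sobolev embedding, which can be quoted. The only thing one has to be slightly careful about is ensuring that the Gagliardo integral is taken over $\mathbb{R}^n\times\mathbb{R}^n$ (not $\Omega\times\Omega$) before applying the embedding, but this is automatic from the definition of $W^{s,2}_0(\Omega)$ used in the paper. Note also that the condition $n>2s$ is essential so that $2^*_s$ is finite and larger than $2$; in the borderline/low-dimensional case $n\leq 2s$ (only possible for $s$ close to $1$ and $n=1$) a different argument would be needed, but this is excluded by assumption.
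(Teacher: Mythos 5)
Your argument is exactly the paper's proof: Hölder's inequality with exponent pair $\left(\tfrac{n}{n-2s},\tfrac{n}{2s}\right)$ on $\Omega$, followed by the fractional Sobolev inequality (\cite[Theorem 6.5]{Hitch}) applied over $\mathbb{R}^n$ using that $u$ vanishes outside $\Omega$. No difference in substance or in the constants' dependence.
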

\begin{proof}
	Since $u \in W^{s,2}_0(\Omega) \subset W^{s,2}(\mathbb{R}^n)$ and $n>2s$, applying H\"older's inequality and then the fractional Sobolev inequality (cf. \cite[Theorem 6.5]{Hitch}) leads to
	\begin{align*}
	\int_{\Omega} |u(x)|^2 dx & \leq |\Omega|^{\frac{2s}{n}} \left ( \int_{\Omega} |u(x)|^{\frac{2n}{n-2s}} dx \right )^{\frac{n-2s}{n}} \\
	& \leq C |\Omega|^{\frac{2s}{n}} \int_{\mathbb{R}^n} \int_{\mathbb{R}^n} \frac{|u(x)-u(y)|^2}{|x-y|^{n+2s}}dydx,
	\end{align*}
	where $C=C(n,s)>0$. This finishes the proof.
\end{proof}

\subsection{Besov-type spaces}
Next, let us introduce some function spaces of Besov-type. In order to do so, for $q \in [1,\infty)$ and any function $u \in L^q(\mathbb{R}^n)$ we define the quantities
$$ [u]_{\mathcal{N}_\infty^{\beta,q}(\mathbb{R}^n)}:= \sup_{|h|>0} \left | \left | \frac{\delta_h u}{|h|^\beta} \right | \right |_{L^q(\mathbb{R}^n)}, \quad 0<\beta \leq 1 $$
and 
$$ [u]_{\mathcal{B}_\infty^{\beta,q}(\mathbb{R}^n)}:= \sup_{|h|>0} \left | \left | \frac{\delta_h^2 u}{|h|^\beta} \right | \right |_{L^q(\mathbb{R}^n)}, \quad 0<\beta <2. $$
This enables us to define the two Besov-type spaces
$$ \mathcal{N}_\infty^{\beta,q}(\mathbb{R}^n):= \left \{u \in L^q(\mathbb{R}^n) \mid [u]_{\mathcal{N}_\infty^{\beta,q}(\mathbb{R}^n)} < \infty \right \}, \quad 0<\beta \leq 1 $$
and 
$$ \mathcal{B}_\infty^{\beta,q}(\mathbb{R}^n):= \left \{u \in L^q(\mathbb{R}^n) \mid [u]_{\mathcal{B}_\infty^{\beta,q}(\mathbb{R}^n)} < \infty \right \}, \quad 0<\beta <2. $$
The following embedding result can be found in \cite[Lemma 2.3]{BS}.
\begin{lem} \label{embedding5}
	Let $\beta \in (0,1)$ and $q \in [1,\infty)$. Then we have the continuous embedding $$\mathcal{B}_\infty^{\beta,q}(\mathbb{R}^n) \hookrightarrow \mathcal{N}_\infty^{\beta,q}(\mathbb{R}^n).$$
	More precisely, for every $u \in \mathcal{B}_\infty^{\beta,q}(\mathbb{R}^n)$ we have
	$$ [u]_{\mathcal{N}_\infty^{\beta,q}(\mathbb{R}^n)} \leq \frac{C}{1-\beta} [u]_{\mathcal{B}_\infty^{\beta,q}(\mathbb{R}^n)},$$
	where $C=C(n,q)>0$.
\end{lem}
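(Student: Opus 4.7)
The plan is to exploit the elementary identity
$$\delta_{2h} u(x) = 2 \delta_h u(x) + \delta_h^2 u(x),$$
which is immediate from the definitions of $\delta_h$ and $\delta_h^2$. Rearranging gives the recursion $\delta_h u = \tfrac{1}{2}\delta_{2h} u - \tfrac{1}{2}\delta_h^2 u$, and iterating it $k$ times yields
$$\delta_h u(x) = 2^{-k}\delta_{2^k h} u(x) - \sum_{j=0}^{k-1} 2^{-(j+1)} \delta_{2^j h}^2 u(x)$$
for every integer $k \geq 1$. My strategy is then to take the $L^q(\mathbb{R}^n)$ norm of this identity, let $k \to \infty$, and sum a geometric series.

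For the first term on the right-hand side, the crude bound
$$\bigl\|2^{-k}\delta_{2^k h} u\bigr\|_{L^q(\mathbb{R}^n)} \leq 2^{1-k}\|u\|_{L^q(\mathbb{R}^n)}$$
suffices, and it tends to zero as $k \to \infty$ since $u \in L^q(\mathbb{R}^n)$ by the very definition of the Besov space. For the sum, I would apply the triangle inequality together with the tautological estimate $\|\delta_{2^j h}^2 u\|_{L^q(\mathbb{R}^n)} \leq (2^j |h|)^{\beta}[u]_{\mathcal{B}_\infty^{\beta,q}(\mathbb{R}^n)}$ to obtain
$$\sum_{j=0}^{k-1} 2^{-(j+1)} \bigl\|\delta_{2^j h}^2 u\bigr\|_{L^q(\mathbb{R}^n)} \leq \frac{|h|^\beta [u]_{\mathcal{B}_\infty^{\beta,q}(\mathbb{R}^n)}}{2}\sum_{j=0}^{k-1} 2^{-j(1-\beta)}.$$
Since $\beta < 1$, this geometric series converges, and passing to the limit $k \to \infty$ after dividing by $|h|^\beta$ produces
$$\frac{\|\delta_h u\|_{L^q(\mathbb{R}^n)}}{|h|^\beta} \leq \frac{[u]_{\mathcal{B}_\infty^{\beta,q}(\mathbb{R}^n)}}{2\bigl(1-2^{-(1-\beta)}\bigr)}.$$

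The final step is to extract the correct $\beta$-dependence of the constant. Applying the elementary inequality $1-e^{-x} \geq x/2$ valid on $[0,1]$ to $x = (1-\beta)\ln 2$ yields $1-2^{-(1-\beta)} \geq \tfrac{(1-\beta)\ln 2}{2}$, so the factor above is controlled by $C/(1-\beta)$ for an absolute constant $C$. Taking the supremum over $h \neq 0$ then delivers the asserted inequality.

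I do not expect a real obstacle: once one recognizes the iterative identity $\delta_{2h}u = 2\delta_h u + \delta_h^2 u$, all remaining steps are elementary. The only mild subtlety is the justification of passage to the limit in $k$, which however needs nothing beyond the standing assumption $u \in L^q(\mathbb{R}^n)$.
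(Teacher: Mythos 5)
Your proof is correct, and it is essentially the standard argument for this discrete Marchaud-type inequality (the paper itself only cites \cite[Lemma 2.3]{BS} for this fact, and the argument there is the same telescoping iteration of $\delta_{2h}u = 2\delta_h u + \delta_h^2 u$). Your constant $C = 1/\ln 2$ is in fact absolute, which is even slightly better than the stated dependence $C = C(n,q)$.
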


We also need the following embedding result, cf. \cite[Theorem 2.8]{BLS}.
\begin{lem}\label{Holderemb}
	Let $q \in [1,\infty)$ and $\beta \in (0,1)$ such that $\beta q>n$. If $u \in \mathcal{N}_\infty^{\beta,q}(\mathbb{R}^n)$, then for any $\alpha \in (0,\beta-n/q)$ we have $u \in C^{\alpha}_{loc}(\mathbb{R}^n)$. More precisely, for every $u \in \mathcal{N}_\infty^{\beta,q}(\mathbb{R}^n)$ we have
	$$ \sup_{\substack{_{x,y \in \mathbb{R}^n}\\{x \neq y}}} \frac{|u(x)-u(y)|}{|x-y|^{\alpha}} \leq C \left ([u]_{\mathcal{N}_\infty^{\beta,q}(\mathbb{R}^n)} \right)^\frac{\alpha q +n}{\beta q} \left (||u||_{L^{q}(\mathbb{R}^n)} \right)^{1-\frac{\alpha q +n}{\beta q}},$$
	where $C=C(n,q,\alpha,\beta)>0$.
\end{lem}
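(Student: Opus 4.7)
The plan is to establish a Campanato-type H\"older bound with exponent $\beta-n/q$ and, from the same building block, an $L^\infty$ bound, and then interpolate the two in order to land at the reduced exponent $\alpha<\beta-n/q$ with the precise product structure in $[u]_{\mathcal{N}^{\beta,q}_\infty(\mathbb{R}^n)}$ and $\|u\|_{L^q(\mathbb{R}^n)}$.

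The central building block is the Morrey-type estimate
$$\bigl|u(x) - \overline u_{B_R(x)}\bigr| \leq C\, R^{\beta-n/q}\, [u]_{\mathcal{N}^{\beta,q}_\infty(\mathbb{R}^n)},$$
valid for a.e. $x$ and every $R>0$ (understood for the precise representative of $u$). I would prove it through the telescoping series $u(x) - \overline u_{B_R(x)} = \sum_{k=0}^\infty (\overline u_{B_{\rho_{k+1}}(x)} - \overline u_{B_{\rho_k}(x)})$ with $\rho_k = R/2^k$, which converges at almost every $x$ by Lebesgue differentiation. Each individual difference is then controlled through the double-average representation
$$\bigl|\overline u_{B_{\rho_{k+1}}(x)} - \overline u_{B_{\rho_k}(x)}\bigr| \leq \dashint_{B_{\rho_{k+1}}(x)} \dashint_{B_{\rho_k}(x)} |u(z)-u(w)|\,dw\,dz.$$
The substitution $w = z+h$ confines $h$ to a ball of radius at most $2\rho_k$, and applying Fubini followed by Jensen's inequality in the $z$-variable yields a bound of the form $C\,|B_{\rho_k}|^{-1} |B_{\rho_{k+1}}|^{-1/q} \int_{B_{2\rho_k}} \|\delta_h u\|_{L^q(\mathbb{R}^n)}\,dh$. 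Inserting the Nikolskii bound $\|\delta_h u\|_{L^q(\mathbb{R}^n)} \leq |h|^\beta [u]_{\mathcal{N}^{\beta,q}_\infty(\mathbb{R}^n)}$ and simplifying produces $C\,\rho_k^{\beta-n/q}[u]_{\mathcal{N}^{\beta,q}_\infty(\mathbb{R}^n)}$, whose geometric sum converges precisely because $\beta-n/q>0$.

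With this block in hand, for $x,y \in \mathbb{R}^n$ with $r = |x-y|$ I would split
$$|u(x)-u(y)| \leq \bigl|u(x)-\overline u_{B_{2r}(x)}\bigr| + \bigl|\overline u_{B_{2r}(x)} - \overline u_{B_{2r}(y)}\bigr| + \bigl|\overline u_{B_{2r}(y)} - u(y)\bigr|.$$
The outer terms are handled directly by the building block and the middle term by an analogous Fubini/Jensen argument (the change of variables $h = w-z$ now confines $h$ to a ball of radius $5r$), producing the intermediate estimate $|u(x)-u(y)| \leq C r^{\beta-n/q} [u]_{\mathcal{N}^{\beta,q}_\infty(\mathbb{R}^n)}$. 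In parallel, combining the building block with the trivial bound $|\overline u_{B_R(x)}| \leq C R^{-n/q}\|u\|_{L^q(\mathbb{R}^n)}$ and optimizing over $R$ (the balance point is $R_\star := (\|u\|_{L^q(\mathbb{R}^n)} / [u]_{\mathcal{N}^{\beta,q}_\infty(\mathbb{R}^n)})^{1/\beta}$) gives the pointwise $L^\infty$ bound $\|u\|_{L^\infty(\mathbb{R}^n)} \leq C\, [u]_{\mathcal{N}^{\beta,q}_\infty(\mathbb{R}^n)}^{n/(\beta q)} \|u\|_{L^q(\mathbb{R}^n)}^{1-n/(\beta q)}$.

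Finally, the claimed estimate would follow by interpolating these two bounds at the balance point $r_\star := R_\star$. For $r \leq r_\star$ I would write $r^{\beta-n/q} = r^\alpha \cdot r^{\beta-n/q-\alpha} \leq r^\alpha\, r_\star^{\beta-n/q-\alpha}$ and apply the H\"older bound; for $r > r_\star$ I would use the $L^\infty$ bound together with $1 \leq (r/r_\star)^\alpha$. A direct algebraic check (using that $(\alpha q+n)/(\beta q) = \alpha/\beta + n/(\beta q)$) shows that both cases collapse to the single right-hand side $C r^\alpha [u]_{\mathcal{N}^{\beta,q}_\infty(\mathbb{R}^n)}^{(\alpha q+n)/(\beta q)} \|u\|_{L^q(\mathbb{R}^n)}^{1-(\alpha q+n)/(\beta q)}$. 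The principal technical obstacle I expect is the Fubini/Jensen bookkeeping inside the dyadic step: it requires careful tracking of the ball radii in the change of variables and of how the geometric-series constant degenerates as $\beta q \downarrow n$, which is exactly why the strict inequality $\alpha < \beta-n/q$ appears in the hypotheses rather than allowing equality.
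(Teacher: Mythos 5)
Your argument is correct, and the algebra all checks out: the dyadic telescoping with $\rho_k = R/2^k$ produces a convergent geometric series precisely because $\beta - n/q > 0$, the optimization over $R$ at $R_\star = (\|u\|_{L^q}/[u]_{\mathcal{N}^{\beta,q}_\infty})^{1/\beta}$ yields the $L^\infty$ bound with exponents $n/(\beta q)$ and $1-n/(\beta q)$, and splitting $r \lessgtr R_\star$ does indeed collapse both cases to $r^\alpha [u]^{(\alpha q+n)/(\beta q)}\|u\|^{1-(\alpha q+n)/(\beta q)}$ using the identity $(\alpha q+n)/(\beta q) = \alpha/\beta + n/(\beta q)$. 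Note that the paper does not actually prove this lemma; it defers entirely to the cited reference \cite[Theorem 2.8]{BLS}, and your self-contained Campanato/Morrey averaging argument followed by scaling interpolation is essentially the same route taken there, so no genuinely different technique is involved.
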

Finally, the following result can be found in \cite[Proposition 2.6]{BL}.
\begin{prop} \label{diffsobolev}
	Let $s \in (0,1)$.
	\begin{itemize}
		\item Let $0<r<R$. For any function $\psi \in W^{s,2}_0(B_r)$, we have
		$$ \sup_{|h|>0} \left | \left | \frac{\delta_h \psi}{|h|^s} \right | \right |_{L^2(\mathbb{R}^n)}^2 \leq C \left (\frac{R}{r} \right)^n \left(\frac{R}{R-r} \right )^{3} [\psi]_{W^{s,2}(B_R)}^2,$$
		where $C=C(n,s)>0$.
		\item Let $\Omega \subset \mathbb{R}^n$ be an open set and $\psi \in W^{s,2}_{loc}(\Omega)$. Then for any $R>0$ such that $B_R \Subset \Omega$ and any $0<h_0 \leq \textnormal{dist}(B_R,\partial \Omega)/2$, we have
		\begin{align*}
		\sup_{|h|>0} \left | \left | \frac{\delta_h \psi}{|h|^s} \right | \right |_{L^2(B_R)}^2 \leq C ||\psi||_{W^{s,2}(B_{R+h_0})}^2,
		\end{align*}
		where $C=C(n,s,R,h_0)>0$.
	\end{itemize}
\end{prop}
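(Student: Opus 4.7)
My plan for Proposition~\ref{diffsobolev} is to derive both parts through a Besov-type embedding: the pointwise-in-$h$ control of translates that appears on the left-hand side can be extracted from the Gagliardo seminorm by a standard argument, after which the two parts reduce to comparing global and local fractional seminorms.

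For \emph{Part 1}, I would first establish the global Besov-type embedding
$$\sup_{|h|>0}\frac{\|\delta_h\psi\|_{L^2(\mathbb{R}^n)}^2}{|h|^{2s}} \leq C(n,s)\,[\psi]_{W^{s,2}(\mathbb{R}^n)}^2$$
valid for any $\psi\in W^{s,2}(\mathbb{R}^n)$ and $s\in(0,1)$, which follows from Plancherel together with the elementary pointwise estimate $|e^{i\xi\cdot h}-1|^2 \leq C|\xi|^{2s}|h|^{2s}$ available on that range of $s$. Since $\psi\in W^{s,2}_0(B_r)$ is supported in $B_r\subset B_R$, I would then split
$$[\psi]_{W^{s,2}(\mathbb{R}^n)}^2 = [\psi]_{W^{s,2}(B_R)}^2 + 2\int_{B_r}|\psi(x)|^2\int_{\mathbb{R}^n\setminus B_R}\frac{dy}{|x-y|^{n+2s}}\,dx,$$
bound the inner tail integral by a geometric computation in the spirit of Lemma~\ref{tailestz}, and dominate $\|\psi\|_{L^2(B_r)}^2$ by $[\psi]_{W^{s,2}(B_R)}^2$ using a localized fractional Poincar\'e inequality exploiting the vanishing of $\psi$ on $B_R\setminus B_r$ (for instance by averaging $|\psi(x)|^2 = |\psi(x)-\psi(y)|^2$ over $y\in B_R\setminus B_r$, or alternatively Lemma~\ref{Friedrichs} followed by a careful absorption back into the left-hand side). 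Tracking the geometric constants through these three ingredients should yield the factors $(R/r)^n$ and $(R/(R-r))^3$.

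For \emph{Part 2}, I would localize Part~1 by a smooth cutoff. Pick $\eta\in C^\infty_c(B_{R+h_0})$ with $\eta\equiv 1$ on $B_{R+h_0/2}$ and $|\nabla\eta|\leq C/h_0$, so that $\eta\psi\in W^{s,2}_0(B_{R+h_0})$. Part~1 applied with $r'=R+h_0/2$ and $R'=R+h_0$ then bounds $\sup_{|h|>0}\|\delta_h(\eta\psi)\|_{L^2(\mathbb{R}^n)}^2/|h|^{2s}$ by a constant depending on $n,s,R,h_0$ times $[\eta\psi]_{W^{s,2}(B_{R+h_0})}^2$, and a routine fractional Leibniz estimate for the cutoff controls the latter by $C(n,s,R,h_0)\|\psi\|_{W^{s,2}(B_{R+h_0})}^2$. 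To pass from $\eta\psi$ back to $\psi$ I would split in $|h|$: for $|h|\leq h_0/2$ and $x\in B_R$ both $x$ and $x+h$ sit where $\eta\equiv 1$, so $\delta_h\psi=\delta_h(\eta\psi)$ on $B_R$, while for $|h|>h_0/2$ the trivial bound $\|\delta_h\psi\|_{L^2(B_R)}^2 \leq 4\|\psi\|_{L^2(B_{R+h_0})}^2$ combined with $|h|^{-2s}\leq (h_0/2)^{-2s}$ closes the estimate at the price of a constant depending on $h_0$.

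\textbf{Main obstacle.} The essential step is the Besov-type embedding in Part~1: passing from the integrated Gagliardo quantity to a supremum-in-$h$ bound on $\|\delta_h\psi\|_{L^2(\mathbb{R}^n)}^2/|h|^{2s}$ is the only genuinely nontrivial ingredient, and it is where $0<s<1$ is essential. After that, the tail comparison uses the same mechanism as Lemma~\ref{tailestz}, and Part~2 is a standard cutoff-plus-Leibniz argument whose main subtlety is arranging the various radii and the large-$|h|$ regime so that the resulting constant depends only on $n$, $s$, $R$, and $h_0$ as stated.
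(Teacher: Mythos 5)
The paper itself offers no proof of this proposition --- it is cited directly from \cite[Proposition 2.6]{BL}, so your attempt must be compared against the argument there rather than anything written in this paper. Your proof is correct, but it takes a genuinely different route at the crucial step. For Part 1, everything hinges on the embedding $W^{s,2}(\mathbb{R}^n)\hookrightarrow\mathcal{N}^{s,2}_\infty(\mathbb{R}^n)$, which you derive from Plancherel together with the elementary pointwise bound $|e^{i\xi\cdot h}-1|^2\leq 4|\xi|^{2s}|h|^{2s}$ and the Fourier characterisation of $[\,\cdot\,]_{W^{s,2}(\mathbb{R}^n)}$. This is clean and works, but it is specifically an $L^2$ argument; since \cite{BL} treats the fractional $p$-Laplacian for general $p\geq 2$, their Proposition 2.6 is stated in $L^p$ and proved by an elementary real-variable argument (averaging the telescoping inequality $|u(x+h)-u(x)|\leq|u(x+h)-u(z)|+|u(z)-u(x)|$ over an intermediate point $z$), which covers $p\neq 2$ as well. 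In the present paper only $p=2$ is ever used, so your Fourier shortcut is fully adequate here. The rest of your plan --- decomposing $[\psi]_{W^{s,2}(\mathbb{R}^n)}^2$ into the $B_R\times B_R$ piece plus the tail term, and controlling $\|\psi\|_{L^2(B_r)}$ by averaging $|\psi(x)|^2=\dashint_{B_R\setminus B_r}|\psi(x)-\psi(y)|^2\,dy$ --- does track through to the stated $(R/r)^n(R/(R-r))^3$ factors, and the cutoff argument for Part 2 is the standard one. Two small cautions. First, the ``alternative'' you float of applying Lemma \ref{Friedrichs} and absorbing into the left-hand side does not close uniformly in $0<r<R$: the absorption coefficient is of size $(r/(R-r))^{2s}$, which is not small unless $r$ is much smaller than $R$, so you must in fact use your primary averaging device. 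Second, in Part 2 the cutoff $\eta$ must be supported in a ball strictly inside $B_{R+h_0}$ (say $B_{R+3h_0/4}$ with $\eta\equiv 1$ on $B_{R+h_0/2}$) so that Part 1 applies with a genuine inclusion, and the supremum should implicitly be taken over $|h|\leq h_0$ --- for $\psi\in W^{s,2}_{loc}(\Omega)$ only, $\delta_h\psi$ on $B_R$ is not even defined for arbitrary $h$ --- which is consistent with the only regime the paper actually uses, cf.\ the application in the proof of Proposition \ref{increment}.
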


\subsection{Some elementary inequalities}
The proof of the following elementary inequality can be found in \cite[Lemma A.3]{BLS}.
\begin{lem} \label{elementary0}
For all $X,Y \in \mathbb{R}$ and any $p \geq 1$, we have
$$ \left | |X|^{p-1}X-|Y|^{p-1}Y \right | \geq \frac{1}{C}|X-Y|^p,$$
where $C=C(p)>0$.
\end{lem}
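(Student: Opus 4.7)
My plan is to prove the inequality by a case analysis based on the signs of $X$ and $Y$. First observe that both sides of the desired inequality are symmetric under swapping $X$ and $Y$, so we may assume $X \geq Y$. Moreover, the function $f(t) := |t|^{p-1} t$ is strictly increasing on $\mathbb{R}$, so $f(X) - f(Y) \geq 0$ and the absolute value on the left-hand side can be dropped. It therefore suffices to show $f(X) - f(Y) \geq C^{-1}(X-Y)^p$ in the two cases when $X$ and $Y$ have the same sign and when they have opposite signs.

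For the same-sign case, by the further symmetry $(X,Y) \mapsto (-Y,-X)$ (which preserves both sides) we may reduce to $X \geq Y \geq 0$, so the claim reads $X^p - Y^p \geq (X-Y)^p$. The key elementary tool is the super-additivity
\[
(a+b)^p \geq a^p + b^p \quad \text{for all } a,b \geq 0, \ p \geq 1,
\]
which I would verify by differentiation: the function $h(b) = (a+b)^p - a^p - b^p$ satisfies $h(0)=0$ and $h'(b) = p(a+b)^{p-1} - p b^{p-1} \geq 0$ since $a+b \geq b$. Applying this with $a = X-Y \geq 0$ and $b = Y \geq 0$ yields $X^p \geq (X-Y)^p + Y^p$, which is exactly what is needed (with constant $C=1$).

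For the opposite-sign case, assume $X > 0 > Y$ and set $a = X$, $b = -Y > 0$. Then
\[
f(X) - f(Y) = X^p + (-Y)^p = a^p + b^p, \qquad X - Y = a + b,
\]
so the task reduces to showing $a^p + b^p \geq C^{-1}(a+b)^p$. This follows immediately from convexity of $t \mapsto t^p$ on $[0,\infty)$ for $p \geq 1$, which gives $\bigl(\tfrac{a+b}{2}\bigr)^p \leq \tfrac{a^p+b^p}{2}$ and hence $(a+b)^p \leq 2^{p-1}(a^p+b^p)$.

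Combining both cases, the choice $C = 2^{p-1}$ works uniformly. There is no real obstacle here; the only thing to be careful about is handling the opposite-sign case, where the direction of the inequality derives from convexity rather than from super-additivity. The resulting constant depends only on $p$, as required.
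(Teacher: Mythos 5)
Your proof is correct and self-contained. A small remark on context: the paper does not actually reproduce a proof of this inequality — it simply cites it to \cite[Lemma A.3]{BLS} — so there is no in-text argument to compare against. Your approach (reduce by the odd symmetry of $f(t)=|t|^{p-1}t$ to $X\geq Y$, then split by signs) is a clean elementary route: in the equal-sign case you reduce to $X\geq Y\geq 0$ and use super-additivity $(a+b)^p\geq a^p+b^p$ with $a=X-Y$, $b=Y$, yielding the inequality with constant $1$; in the opposite-sign case the left side becomes $a^p+b^p$ with $a=X$, $b=-Y$ and the right side involves $(a+b)^p$, so convexity of $t\mapsto t^p$ gives $(a+b)^p\leq 2^{p-1}(a^p+b^p)$ and hence the inequality with constant $2^{p-1}$. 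Taking $C=2^{p-1}$ covers both cases. The only cosmetic point worth tidying is the derivative justification of super-additivity at $p=1$, where $b^{p-1}=b^0$ is a $0^0$-type expression at $b=0$; this is harmless because for $p=1$ the super-additivity is an identity, but one could simply treat $p=1$ separately (the lemma is then trivially an equality) and assume $p>1$ in the differentiation step. An alternative proof worth knowing, closer in spirit to how such inequalities are often handled in the $p$-Laplacian literature, is the integral representation $|X|^{p-1}X-|Y|^{p-1}Y=p\int_Y^X|t|^{p-1}\,dt$ combined with the observation that at least half of $[Y,X]$ lies where $|t|\geq (X-Y)/4$, which gives a single unified estimate without case-splitting (at the cost of a worse constant). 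Both routes are fine; yours has the advantage of an explicit and fairly sharp constant.
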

Next, we prove two elementary inequalities which involve the function $J_p$ defined in section 2.1 and are based on the monotonicity property (\ref{PhiMonotone}) of $\Phi$.
\begin{lem} \label{elementary1}
	Let $q \geq 1$ and $a,b,c,d \in \mathbb{R}^n$. If $\Phi:\mathbb{R} \to \mathbb{R}$ satisfies (\ref{PhiMonotone}), then we have
	\begin{align*}
	& (\Phi(a-c)-\Phi(b-d)) \left (J_{q+1}(a-b)-J_{q+1} (c-d) \right ) \\
	& \geq \frac{1}{2}\lambda^{-1} \left | (a-b) - (c-d) \right |^2 (|a-b|^{{q-1}}+|c-d|^{{q-1}}).
	\end{align*}
\end{lem}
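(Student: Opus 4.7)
The plan is to reduce the claimed inequality to a purely scalar inequality involving only $J_{q+1}$ and differences, by isolating the roles of $\Phi$ and $J_{q+1}$, and then to settle that scalar inequality by elementary calculus. (Since $\Phi \colon \mathbb{R} \to \mathbb{R}$, the statement is really one about scalars $a,b,c,d \in \mathbb{R}$.)

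Setting $X := a - b$ and $Y := c - d$, I observe that $(a-c) - (b-d) = X - Y$. The monotonicity assumption (\ref{PhiMonotone}), applied with $t = a - c$ and $t' = b - d$, therefore yields
$$
(\Phi(a-c) - \Phi(b-d))(X - Y) \geq \lambda^{-1}(X-Y)^2.
$$
In particular, $\Phi(a-c) - \Phi(b-d)$ has the same sign as $X-Y$ and $|\Phi(a-c) - \Phi(b-d)| \geq \lambda^{-1}|X - Y|$. Since $t \mapsto J_{q+1}(t) = |t|^{q-1} t$ is strictly increasing on $\mathbb{R}$ for $q \geq 1$, the factor $J_{q+1}(X) - J_{q+1}(Y)$ also carries the sign of $X-Y$. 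Both factors on the left-hand side therefore have the same sign, so their product equals the product of their absolute values; combining this with the lower bound on $|\Phi(a-c) - \Phi(b-d)|$ I obtain
$$
(\Phi(a-c) - \Phi(b-d))(J_{q+1}(X) - J_{q+1}(Y)) \geq \lambda^{-1}(X - Y)(J_{q+1}(X) - J_{q+1}(Y)).
$$

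It then remains to prove the scalar inequality
$$
(X - Y)(J_{q+1}(X) - J_{q+1}(Y)) \geq \tfrac{1}{2}(X-Y)^2 (|X|^{q-1} + |Y|^{q-1}) \qquad \text{for all } X, Y \in \mathbb{R},\ q \geq 1.
$$
I split along the sign of $XY$. If $XY \geq 0$, the evenness of both sides under a simultaneous sign change lets me assume $X \geq Y \geq 0$; cancelling $(X-Y)$, the claim reduces to $X^q - Y^q \geq \tfrac{1}{2}(X-Y)(X^{q-1}+Y^{q-1})$. Fixing $Y$ and differentiating the difference in $X$ gives $\tfrac{1}{2}\bigl(qX^{q-1} - Y^{q-1} + (q-1)YX^{q-2}\bigr)$, which is manifestly nonnegative for $X \geq Y \geq 0$ and $q \geq 1$; since the difference vanishes at $X = Y$, the same-sign case follows. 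If $X > 0 > Y$, writing $Y = -Z$ with $Z > 0$ I have $J_{q+1}(X) - J_{q+1}(Y) = X^q + Z^q$ and $X - Y = X + Z$, so the scalar inequality reduces after cancelling one factor of $X + Z$ to $(X - Z)(X^{q-1} - Z^{q-1}) \geq 0$, which is immediate from the monotonicity of $t \mapsto t^{q-1}$ on $[0, \infty)$.

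The only mildly subtle point is securing the constant $\tfrac{1}{2}$ in the same-sign subcase: a naive trapezoidal-rule bound for $(X^q - Y^q)/(X-Y)$ goes in the wrong direction once $q \geq 2$, so the direct monotonicity argument in $X$ described above is what I would use, as it works uniformly for all $q \geq 1$. Everything else becomes routine once $\Phi$ and $J_{q+1}$ have been decoupled from each other.
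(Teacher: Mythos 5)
Your argument is correct, and it reaches the same intermediate scalar inequality about $J_{q+1}$ that the paper relies on, but you get there by a genuinely different route. Where you diverge is in two places. First, the decoupling step: the paper multiplies the monotonicity inequality $(\Phi(a-c)-\Phi(b-d))(X-Y)\geq\lambda^{-1}(X-Y)^2$ against the $J_{q+1}$ inequality and then divides the resulting product by $(X-Y)^2$; your sign-matching observation (both $\Phi(a-c)-\Phi(b-d)$ and $J_{q+1}(X)-J_{q+1}(Y)$ carry the sign of $X-Y$, so the product equals the product of absolute values) achieves the same reduction more transparently and makes explicit the positivity that the paper uses implicitly when it multiplies the two inequalities. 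Second, and more substantively, the paper dispatches the scalar inequality
\[
(X-Y)\bigl(J_{q+1}(X)-J_{q+1}(Y)\bigr)\;\geq\;\tfrac{1}{2}\bigl(|X|^{q-1}+|Y|^{q-1}\bigr)(X-Y)^2
\]
by citing the algebraic identity from Lindqvist's notes, in which the residual term $\tfrac{1}{2}(|X|^{q-1}-|Y|^{q-1})(X^2-Y^2)$ is manifestly nonnegative; you instead prove it from scratch by splitting on the sign of $XY$, reducing to a one-variable derivative estimate in the same-sign case and to the trivial inequality $(X-Z)(X^{q-1}-Z^{q-1})\geq 0$ in the opposite-sign case. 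Your version is self-contained and elementary, at the cost of a short case analysis; the paper's version is shorter but leans on an external identity. One small point worth making explicit in your write-up is that the degenerate case $X=Y$ (equivalently $a-c=b-d$) should be noted separately, since that is where you later cancel a factor of $X-Y$ — both sides of the target inequality vanish there, so nothing is lost, but the cancellation step tacitly assumes $X\neq Y$.
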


\begin{proof}
If $a-c=b-d$, then also $a-b=c-d$, so that in this case both sides of the inequality vanish. Next, we consider the case when $a-c \neq b-d$. 
In view of the monotonicity assumption (\ref{PhiMonotone}) imposed on $\Phi$, we have 
\begin{equation} \label{mon5}
\left (\Phi(a-c)-\Phi(b-d) \right )((a-b)-(c-d)) \geq \lambda^{-1} ((a-b)-(c-d))^2. \end{equation}
Moreover, by \cite[page 71]{Lind}, for all $x,y \in \mathbb{R}$ we have
$$ (J_{q+1}(y)-J_{q+1}(x))(y-x)= \frac{1}{2} \left(|y|^{q-1}+|x|^{q-1} \right) (y-x)^2 + \frac{|y|^{q-1}-|x|^{q-1}}{2} (y^2-x^2) .$$
Since the last term on the right-hand side is non-negative, by choosing $y=a-b$ and $x=c-d$ we obtain
$$ (J_{q+1}(a-b)-J_{q+1}(c-d))((a-b)-(c-d)) \geq \frac{1}{2} \left(|a-b|^{q-1}+|c-d|^{q-1} \right) ((a-b)-(c-d))^2 .$$
Multiplying the inequality (\ref{mon5}) with the one in the previous display leads to 
\begin{align*}
& \left (\Phi(a-c)-\Phi(b-d) \right ) (J_{q+1}(a-b)-J_{q+1}(c-d))((a-b)-(c-d))^2 \\
\geq & \frac{1}{2}\lambda^{-1} \left(|a-b|^{q-1}+|c-d|^{q-1} \right) ((a-b)-(c-d))^4,
\end{align*}
so that the claim follows by simplifying the factor $((a-b)-(c-d))^2$ from both sides.
\end{proof}

\begin{lem} \label{elementary2}
Let $q \geq 1$ and $a,b,c,d \in \mathbb{R}^n$. If $\Phi:\mathbb{R} \to \mathbb{R}$ satisfies (\ref{PhiMonotone}), then we have
\begin{align*}
& (\Phi(a-c)-\Phi(b-d)) \left (J_{q+1}(a-b)-J_{q+1} (c-d) \right ) \\
& \geq \frac{1}{C} \left | |a-b|^{\frac{q-1}{2}}(a-b) - |c-d|^{\frac{q-1}{2}}(c-d) \right |^2,
\end{align*}
where $C=C(\lambda,q)>0$.
\end{lem}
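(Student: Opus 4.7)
The key observation is that the right-hand side can be recognized as the squared difference of a power-type map: writing $p := \frac{q+3}{2}$, one has $|a-b|^{\frac{q-1}{2}}(a-b) = J_p(a-b)$ and similarly for $c-d$, so the claim becomes
$$
(\Phi(a-c)-\Phi(b-d))\bigl(J_{q+1}(a-b)-J_{q+1}(c-d)\bigr) \;\geq\; \frac{1}{C}\,\bigl|J_p(a-b)-J_p(c-d)\bigr|^2.
$$
Since $q\geq 1$ we have $p\geq 2$, which is the regime where $J_p$ is $C^1$ with polynomial control on its derivative.

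\textbf{Step 1.} Apply Lemma \ref{elementary1} to the left-hand side to obtain the lower bound
$$
(\Phi(a-c)-\Phi(b-d))\bigl(J_{q+1}(a-b)-J_{q+1}(c-d)\bigr) \;\geq\; \tfrac{1}{2}\lambda^{-1}\bigl|(a-b)-(c-d)\bigr|^2\bigl(|a-b|^{q-1}+|c-d|^{q-1}\bigr).
$$

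\textbf{Step 2.} Prove the elementary comparison
$$
\bigl|J_p(X)-J_p(Y)\bigr|^2 \;\leq\; C(q)\,|X-Y|^2\bigl(|X|^{q-1}+|Y|^{q-1}\bigr) \qquad \text{for all } X,Y\in\mathbb{R},
$$
with $p=\tfrac{q+3}{2}$. This follows from the mean value theorem: since $J_p$ is $C^1$ on $\mathbb{R}$ for $p\geq 2$ with $|J_p'(\xi)|=(p-1)|\xi|^{p-2}$, one has $|J_p(X)-J_p(Y)|\leq C(p)|X-Y|\bigl(|X|^{p-2}+|Y|^{p-2}\bigr)$, and the exponent $p-2=\tfrac{q-1}{2}$ makes the square give exactly $|X|^{q-1}+|Y|^{q-1}$ up to a constant.

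\textbf{Step 3.} Substitute $X=a-b$ and $Y=c-d$ in Step 2, then combine with Step 1. The resulting constant depends only on $\lambda$ and $q$, as required.

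\textbf{Expected difficulty.} None of the steps are deep; the content of the lemma is essentially a repackaging of Lemma \ref{elementary1} through the algebraic identity $|a-b|^{q-1}+|c-d|^{q-1}\gtrsim (|a-b|^{(q-1)/2}+|c-d|^{(q-1)/2})^2/2$ combined with a Lipschitz-type estimate for $J_p$. The only mild care needed is ensuring $p\geq 2$ so that the derivative of $J_p$ is bounded by a sum of powers rather than involving negative exponents; this is precisely guaranteed by the hypothesis $q\geq 1$.
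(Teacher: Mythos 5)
Your proof is correct, but it follows a genuinely different route from the paper's. The paper exploits the identity $(a-c)-(b-d)=(a-b)-(c-d)$ to peel off the monotonicity factor directly: it multiplies and divides by $((a-c)-(b-d))^2$, uses (\ref{PhiMonotone}) to bound $(\Phi(a-c)-\Phi(b-d))((a-c)-(b-d))\geq\lambda^{-1}((a-c)-(b-d))^2$, reduces the claim to a pure $J_{q+1}$ inequality $(J_{q+1}(X)-J_{q+1}(Y))(X-Y)\gtrsim |J_{(q+3)/2}(X)-J_{(q+3)/2}(Y)|^2$, and cites \cite[Lemma A.1]{BLS} to conclude. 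You instead route through the already-established Lemma \ref{elementary1} to get the lower bound $\tfrac12\lambda^{-1}|X-Y|^2(|X|^{q-1}+|Y|^{q-1})$ with $X=a-b$, $Y=c-d$, and then compare this with $|J_p(X)-J_p(Y)|^2$ via the mean value theorem for $J_p$ (with $p=\tfrac{q+3}{2}\geq 2$, so $|J_p'(\xi)|=(p-1)|\xi|^{p-2}$ is controlled by $|X|^{p-2}+|Y|^{p-2}$, and squaring matches the exponent $q-1$). Both approaches work; the paper's is slightly more direct since it needs one citation rather than two nested elementary steps, while yours has the advantage of being self-contained modulo Lemma \ref{elementary1} and not invoking the external inequality \cite[Lemma A.1]{BLS}. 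One small nitpick: the algebraic sub-additivity $|X|^{q-1}+|Y|^{q-1}\gtrsim(|X|^{(q-1)/2}+|Y|^{(q-1)/2})^2$ that you mention in the final paragraph is not actually what you use; the squaring step in Step 2 needs $(\alpha+\beta)^2\leq 2(\alpha^2+\beta^2)$, which is the reverse-direction estimate. Your Steps 1--3 are unaffected by this slip.
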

\begin{proof}
If $a-c=b-d$, then both sides of the inequality vanish. Next, let us consider the case when $a-c \neq b-d$.
In view of (\ref{PhiMonotone}), we have 
\begin{align*}
& (\Phi(a-c)-\Phi(b-d)) \left (J_{q+1}(a-b)-J_{q+1} (c-d) \right ) \\
= & (\Phi(a-c)-\Phi(b-d)) ((a-c)-(b-d)) \\
& \times \left (J_{q+1}(a-b)-J_{q+1} (c-d) \right ) ((a-b)-(c-d)) \\
& \times ((a-c)-(b-d))^{-2} \\
\geq & \lambda^{-1} \left (J_{q+1}(a-b)-J_{q+1} (c-d) \right ) ((a-b)-(c-d)).
\end{align*}
The right-hand side of the above estimate can be further estimated by applying \cite[Lemma A.1]{BLS} with $p=q+1$ and $q=2$, which yields
\begin{align*}
& \left (J_{q+1}(a-b)-J_{q+1} (c-d) \right ) ((a-b)-(c-d)) \\
\geq & q \left (\frac{2}{q+1} \right)^2  \left | |a-b|^{\frac{q-1}{2}}(a-b) - |c-d|^{\frac{q-1}{2}}(c-d) \right |^2.
\end{align*}
The claim now follows by combining the last two displays.
\end{proof}

\subsection{Some preliminary estimates}
The following Caccioppoli-type inequality can be proved in essentially the same way as the one in \cite[Theorem 3.1]{selfimpro}.
\begin{thm} \label{Cacc}
Let $0<r<R$, $x_0 \in \mathbb{R}^n$, $\lambda \geq 1$ and $f \in L^{\frac{2n}{n+2s}}(B_R(x_0))$. Moreover, assume that 
$A \in \mathcal{L}_0(\lambda)$ and that the Borel function $\Phi:\mathbb{R} \to \mathbb{R}$ satisfies
\begin{equation} \label{weakassump}
|\Phi(t)| \leq \lambda t, \quad \Phi(t)t \geq \lambda^{-1} t^2 \quad \forall t \in \mathbb{R}.
\end{equation}
Then for any local weak solution $u \in W^{s,2}(B_{R}(x_0)) \cap L^1_{2s}(\mathbb{R}^n)$ of 
$ L_A^\Phi u = f \text{ in } B_{R}(x_0),$ we have
\begin{align*}
& \int_{B_r(x_0)} \int_{B_r(x_0)} \frac{|u(x)-u(y)|^2}{|x-y|^{n+2s}}dydx \\ \leq & C \bigg ( R^{-2s} \int_{B_R(x_0)} u(x)^2dx + \int_{\mathbb{R}^n \setminus B_R(x_0)} \frac{|u(y)|}{|x_0-y|^{n+2s}}dy \int_{B_R(x_0)} |u(x)|dx \\ & + \left (\int_{B_R(x_0)} |f(x)|^{\frac{2n}{n+2s}}dx \right )^\frac{n+2s}{n} \Bigg ),
\end{align*}
where $C=C(n,s,\lambda,r,R)>0$.
\end{thm}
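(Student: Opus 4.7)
The plan is to adapt the proof of the nonlocal Caccioppoli inequality from \cite[Theorem 3.1]{selfimpro} to the present nonlinear setting, using only the weak pointwise bounds (\ref{weakassump}) on $\Phi$. First, I would fix a cutoff $\eta \in C_c^\infty(\mathbb{R}^n)$ with $0 \leq \eta \leq 1$, supported in $B_{(R+r)/2}(x_0)$, identically one on $B_r(x_0)$, and satisfying $|\nabla\eta| \leq C/(R-r)$. Testing the weak formulation (\ref{weaksolx}) against $\varphi := \eta^2 u$, which lies in $W_c^{s,2}(B_R(x_0))$ by the standard Leibniz-type bound for fractional Sobolev functions, converts the theorem into an estimate on $\mathcal{E}_A^\Phi(u,\eta^2 u) = (f,\eta^2 u)_{L^2}$.

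The main step is to split the double integral $\mathcal{E}_A^\Phi(u,\eta^2 u)$ into a diagonal contribution where both $x,y \in B_R(x_0)$ and two symmetric tail contributions where one variable lies outside $B_R(x_0)$. For the diagonal part I would apply the algebraic identity
$$\eta(x)^2 u(x) - \eta(y)^2 u(y) = \tfrac{1}{2}\bigl(\eta(x)^2+\eta(y)^2\bigr)\bigl(u(x)-u(y)\bigr) + \tfrac{1}{2}\bigl(u(x)+u(y)\bigr)\bigl(\eta(x)+\eta(y)\bigr)\bigl(\eta(x)-\eta(y)\bigr),$$
multiply through by $\Phi(u(x)-u(y))$, use $\Phi(t)t \geq \lambda^{-1}t^2$ on the first summand and $|\Phi(t)| \leq \lambda|t|$ on the second, and then apply Young's inequality to absorb the mixed cross-term into the main quadratic term. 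The residual error is of order $(u(x)^2+u(y)^2)(\eta(x)-\eta(y))^2$, whose integral against the kernel is controlled by $R^{-2s}\|u\|_{L^2(B_R(x_0))}^2$ via the Lipschitz bound on $\eta$ and the elementary estimate $\int_{|x-y| \leq R}|x-y|^{2-n-2s}\,dy \lesssim R^{2-2s}$; meanwhile the retained quadratic term, together with the lower bound in (\ref{eq1}) and the fact that $\eta \equiv 1$ on $B_r(x_0)$, yields the desired $[u]_{W^{s,2}(B_r(x_0))}^2$.

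For the tail contributions, with $x \in \mathrm{supp}\,\eta \subset B_{(R+r)/2}(x_0)$ and $y \notin B_R(x_0)$, I would bound $|\Phi(u(x)-u(y))| \leq \lambda(|u(x)|+|u(y)|)$, replace $|x-y|^{-n-2s}$ by a constant multiple of $|x_0-y|^{-n-2s}$ via Lemma \ref{tailestz}, and integrate in $x$ first to produce a product of $\int_{B_R(x_0)}|u|$ and the nonlocal tail. For the source term $(f,\eta^2 u)_{L^2}$, I would invoke H\"older's inequality with conjugate exponents $\frac{2n}{n+2s}$ and $\frac{2n}{n-2s}$, combine it with the fractional Sobolev embedding applied to $\eta u \in W^{s,2}_0(B_R(x_0))$, and absorb the resulting $W^{s,2}$-seminorm into the left-hand side via a final Young's inequality.

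The principal obstacle is the algebraic manipulation in the diagonal contribution: since only the weak pointwise bounds (\ref{weakassump}) on $\Phi$ are available and not the stronger conditions (\ref{PhiLipschitz})--(\ref{PhiMonotone}), the nonlinear factor $\Phi(u(x)-u(y))$ cannot be handled via Lemmas \ref{elementary1}--\ref{elementary2}, and the split-and-absorb argument must be carried out by hand with careful bookkeeping of the $\lambda$-dependence so that the coercive term is not destroyed. Once this step is established, the remainder of the proof runs in close analogy with \cite[Theorem 3.1]{selfimpro} and delivers the stated inequality.
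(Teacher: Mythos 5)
Your proposal is correct and follows essentially the same strategy as \cite[Theorem 3.1]{selfimpro}, which is precisely what the paper cites (without reproducing the argument) as the source of this Caccioppoli inequality: test with $\eta^2 u$, split the diagonal and tail contributions, absorb the cross term by Young, and control the source via H\"older plus the fractional Sobolev inequality. The ``principal obstacle'' you flag in the last paragraph is minor: the two bounds in (\ref{weakassump}) are by design exactly the coercivity and boundedness ingredients the argument needs, so $\Phi(t)t$ and $|\Phi(t)|$ simply play the roles of $t^2$ and $|t|$ from the linear case, and Lemmas \ref{elementary1}--\ref{elementary2} are irrelevant here (they serve the discrete-derivative testing in Section 3, not the Caccioppoli estimate).
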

We remark that the assumptions in (\ref{weakassump}) are clearly implied by the assumptions $\Phi(0)=0$, (\ref{PhiLipschitz}) and (\ref{PhiMonotone}) which are used in our main results. \par
The following result on local boundedness is essentially given by \cite[Theorem 3.8]{BP}, where the below result is stated under the stronger assumption that $u \in W^{s,2}_0(B_R(x_0))$ and in setting of the fractional $p$-Laplacian, which applied to our setting means that strictly speaking it only contains the case when $\Phi(t)=t$ and $A(x,y) \equiv 1$. Nevertheless, an inspection of the proof shows that it remains valid for local weak solutions, see also \cite[Theorem 3.2]{BLS}. Moreover, the case of a general $\Phi$ and a general $A$ can easily be treated by noting that the Caccioppoli-type inequality from \cite[Proposition 3.5]{BP} remains valid for such a general $\Phi$ and a general $A$ by simply applying the bounds imposed on $\Phi$ and $A$ whenever appropriate in a similar fashion as in \cite[Theorem 3.1]{selfimpro}. Therefore, we have the following result.
\begin{thm} \label{finnish}
	Let $R>0$, $x_0 \in \mathbb{R}^n$, $\lambda \geq 1$, $\sigma \in (0,1)$ and $f \in L^q(B_R(x_0))$ for some $q>\frac{n}{2s}$. Moreover, consider a kernel coefficient $A \in \mathcal{L}_0(\lambda)$ and assume that the Borel function $\Phi:\mathbb{R} \to \mathbb{R}$ satisfies (\ref{weakassump}).
	Then for any local weak solution $u \in W^{s,2}(B_{R}(x_0)) \cap L^1_{2s}(\mathbb{R}^n)$ of the equation 
	$$ L_A^\Phi u = f \text{ in } B_{R}(x_0),$$
	we have the estimate
	\begin{align*}
	\sup_{x \in \overline B_{\sigma R}(x_0)}|u(x)| \leq & C \left ( \bigg ( \dashint_{B_{R}(x_0)}u(x)^2dx \right )^{\frac{1}{2}} + R^{2s} \int_{\mathbb{R}^n \setminus B_{\sigma R}(x_0)} \frac{|u(y)|}{|x_0-y|^{n+2s}}dy \\ & + R^{2s-\frac{n}{q}} ||f||_{L^q(B_{R}(x_0))} \bigg ),
	\end{align*}
	where $C=C(n,s,\lambda,q,\sigma)>0$.
\end{thm}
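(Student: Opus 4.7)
The plan is a De~Giorgi level-set iteration on truncations of $u$, directly parallel to \cite[Theorem~3.8]{BP} and \cite[Theorem~3.2]{BLS}. The only change compared with those references is that the bounds (\ref{weakassump}) on $\Phi$ and the ellipticity bounds (\ref{eq1}) on $A$ must be invoked wherever the model identities $\Phi(t)=t$ and $A\equiv 1$ were used previously. By translating we may assume $x_0=0$, and by applying the argument both to $u$ and to $-u$ (the latter solves an equation of the same type, with $\Phi$ replaced by $\widetilde{\Phi}(t):=-\Phi(-t)$ still satisfying (\ref{weakassump})) it is enough to bound $\esssup_{B_{\sigma R}} u_+$.

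First I would establish a Caccioppoli inequality for the truncations $w_k := (u-k)_+$, $k\geq 0$: for $\sigma R \leq r_1 < r_2 \leq R$ pick a cutoff $\eta \in C_c^\infty(B_{(r_1+r_2)/2})$ with $\eta \equiv 1$ on $B_{r_1}$ and $|\nabla \eta| \leq 4/(r_2-r_1)$, and test (\ref{weaksolx}) with $\varphi = w_k\eta^2$. The coercivity $\Phi(t)t\geq \lambda^{-1}t^2$ produces the diagonal term $[w_k\eta]_{W^{s,2}(\mathbb{R}^n)}^2$, while $|\Phi(t)|\leq \lambda|t|$ together with the bounds on $A$ lets us absorb the cross terms involving $\eta(x)-\eta(y)$ and control the contribution from $\mathbb{R}^n \setminus B_{r_2}$ by an internal tail $T_k := \sup_{x\in B_{(r_1+r_2)/2}} \int_{\mathbb{R}^n\setminus B_{r_2}} w_k(y)|x-y|^{-n-2s}\, dy$. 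This reproduces the nonlinear analogue of \cite[Proposition~3.5]{BP}:
\begin{equation*}
  [w_k\eta]_{W^{s,2}(\mathbb{R}^n)}^2
  \leq \frac{C}{(r_2-r_1)^{2s}}\int_{B_{r_2}} w_k^2\, dx
  + C\, T_k \int_{B_{r_2}} w_k\eta^2\, dx
  + C \int_{B_{r_2}} |f|\, w_k\eta^2\, dx,
\end{equation*}
with $C=C(n,s,\lambda)$.

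Combining this with the fractional Sobolev inequality applied to $w_k\eta \in W^{s,2}_0(B_{r_2})$ (yielding $\|w_k\|_{L^{2^{\ast}_s}(B_{r_1})}^{2} \leq C[w_k\eta]_{W^{s,2}(\mathbb{R}^n)}^2$ for $2^{\ast}_s := 2n/(n-2s)$) and with H\"older's inequality on $\{w_k>0\}\cap B_{r_2}$, one derives a recursive estimate
\begin{equation*}
  A_k(r_1) \leq \frac{C\bigl(A_h(r_2)\bigr)^{1+\gamma}}{(k-h)^{2\gamma}(r_2-r_1)^{2s\gamma}} + (\text{tail and } f\text{-contributions})
\end{equation*}
for $0\leq h<k$, where $A_k(r):= \int_{B_r\cap\{u>k\}} w_k^2\, dx$ and $\gamma=\gamma(n,s)>0$; the assumption $q>n/(2s)$ is used precisely to guarantee $1-1/q-1/2^{\ast}_s>0$, which makes the $f$-term superlinear in $|\{w_k>0\}|$ and hence absorbable with a factor $R^{2s-n/q}\|f\|_{L^q}$. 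I would then iterate on $k_j := M(1-2^{-j})$ and $r_j := \sigma R + (1-\sigma)R\, 2^{-j}$, using the standard geometric-iteration lemma of \cite{BP} and \cite{BLS} to conclude $A_{k_j}(r_j) \to 0$, and hence $\esssup_{B_{\sigma R}} u_+ \leq M$, as soon as $M$ is a constant multiple of the right-hand side of the desired estimate. The internal tail $T_k$ is turned into the external tail appearing in the statement via Lemma~\ref{tailestz} together with the trivial bound $w_k\leq |u|$. The main obstacle is really just careful bookkeeping: tracking that every dependence on $\Phi$ and $A$ collapses to the single constant $\lambda$ at the Caccioppoli step, and ensuring that the exponents in the recursion are calibrated so that the iteration closes; no new analytical difficulty arises beyond what is already handled in the cited works.
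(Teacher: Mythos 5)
Your proposal matches the paper's own approach: the paper derives this result by the same adaptation of the De~Giorgi iteration from \cite[Theorem~3.8]{BP} (extended to local weak solutions as in \cite[Theorem~3.2]{BLS}), with the Caccioppoli inequality of \cite[Proposition~3.5]{BP} made nonlinear by inserting (\ref{weakassump}) and (\ref{eq1}) exactly as you describe. One small correction: $q>n/(2s)$ is not equivalent to $1-1/q-1/2^{\ast}_s>0$ (the latter only requires $q>2n/(n+2s)$); it is rather the combination of that H\"older exponent with the Sobolev gain $2s/n$ on $|\{w_k>0\}|$ that makes the recursion superlinear, and this combined exponent exceeds $1$ precisely when $q>n/(2s)$.
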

In the case when $f=0$ and $\Phi(t)=t$, the following result concerning basic H\"older regularity
follows from \cite[Theorem 1.2]{finnish}. The case of a general $\Phi$ can again be treated by replacing the Caccioppoli inequality given by \cite[Theorem 1.4]{finnish} with the one from \cite[Theorem 3.1]{selfimpro}. The result with a general right-hand side can then be proved in essentially the same way as in \cite[section 3.2]{BLS}.
\begin{thm} \label{finnish1}
	Under the same assumptions and notation as in Theorem \ref{finnish}, there exists some $\beta=\beta(n,s,\lambda,q,\sigma) \in (0,1)$ such that $u \in C^\beta(\overline B_{\sigma R}(x_0))$. Moreover, we have the estimate 
	\begin{align*}
	[u]_{C^\beta(B_{\sigma R}(x_0))} \leq & C \left ( \bigg ( \dashint_{B_{R}(x_0)}u(x)^2dx \right )^{\frac{1}{2}} + R^{2s} \int_{\mathbb{R}^n \setminus B_{\sigma R}(x_0)} \frac{|u(y)|}{|x_0-y|^{n+2s}}dy \\ & + R^{2s-\frac{n}{q}} ||f||_{L^q(B_{R}(x_0))} \bigg ),
	\end{align*}
	where $C=C(n,s,\lambda,q,\sigma,\beta)>0$.
\end{thm}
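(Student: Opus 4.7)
The plan is to reduce the theorem to known results via two successive extensions: first from the linear homogeneous case $\Phi(t)=t$, $f=0$ to a general $\Phi$ satisfying (\ref{weakassump}) with $f=0$, and then to the case of a nonzero right-hand side. The hypotheses (\ref{weakassump}) on $\Phi$ are tailored precisely so that the arguments available in the linear case go through with cosmetic modifications, and the author's sketch preceding the statement already points at exactly this route.

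For the first two reductions, the basis is \cite[Theorem 1.2]{finnish}, which proves oscillation decay of $u$ --- and hence $C^{\beta_0}$ regularity for some $\beta_0 \in (0,1)$ --- in the case $\Phi(t)=t$, $f=0$ by a De Giorgi--type scheme. That proof uses the equation only through the Caccioppoli inequality of \cite[Theorem 1.4]{finnish} together with a logarithmic energy estimate obtained by testing with a truncated logarithm of $u$. I would replace the former by Theorem \ref{Cacc} with $f=0$ (essentially \cite[Theorem 3.1]{selfimpro}), which is valid for any Borel $\Phi$ satisfying (\ref{weakassump}) with constants depending only on $n,s,\lambda$. The logarithmic estimate likewise goes through, since the coercivity inequality $\Phi(t)t \geq \lambda^{-1}t^2$ in (\ref{weakassump}) is exactly what produces a quadratic lower bound after testing, while the upper bound $|\Phi(t)|\leq\lambda|t|$ controls the error terms. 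After these substitutions the De Giorgi iteration in \cite{finnish} delivers $u \in C^{\beta_0}(\overline{B}_{\sigma R}(x_0))$ with the desired form of estimate (the $f$-term being absent) and constants depending only on $n,s,\lambda,\sigma$.

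To handle $f \in L^q(B_R(x_0))$ with $q>n/(2s)$, I would follow the perturbative approach of \cite[section 3.2]{BLS}. One tests the equation with truncations $(u-k)_\pm$ multiplied by a spatial cutoff, producing a Caccioppoli--type inequality in which $f$ contributes an additional integral of the form $\int f (u-k)_\pm \eta^2$; H\"older's inequality together with the fractional Sobolev embedding and the assumption $q>n/(2s)$ absorb this term into the level-set decay, yielding the local $L^\infty$ bound of Theorem \ref{finnish}. Feeding this back into the oscillation decay iteration from Step~2 produces, after a standard rescaling argument, a H\"older exponent $\beta=\beta(n,s,\lambda,q,\sigma)\in(0,1)$ and an additive contribution $R^{2s-n/q}\|f\|_{L^q(B_R(x_0))}$, whose scaling is the natural one making the $f$--term homogeneous with the $L^2$-average and the tail.

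I expect the main technical obstacle to lie in the bookkeeping for Step~2: one has to verify that nothing in the proof of \cite[Theorem~1.2]{finnish} silently exploits the sign or linearity of $\Phi(t)=t$ beyond what (\ref{weakassump}) provides, in particular in the manipulation of the logarithmic estimate and in the localization of tail terms. Since the two--sided bounds in (\ref{weakassump}) are an exact abstract replacement for the pointwise identity $\Phi(t)t=t^2$ that is used implicitly in the linear case, this verification is expected to go through unchanged, and once it has been made the passage to nonzero $f$ via the \cite{BLS} scheme is a routine modification.
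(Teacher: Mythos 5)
Your proposal follows the same route the paper indicates: start from the linear homogeneous case via \cite[Theorem 1.2]{finnish}, extend to general $\Phi$ by substituting the Caccioppoli inequality from \cite[Theorem 3.1]{selfimpro} (i.e.\ Theorem \ref{Cacc}) for the one in \cite[Theorem 1.4]{finnish}, and then incorporate the right-hand side $f$ as in \cite[section 3.2]{BLS}. Your additional remark about checking the logarithmic energy estimate under (\ref{weakassump}) is a reasonable and implicit part of that verification, so the proposal is correct and essentially matches the paper's own (sketched) argument.
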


\section{Higher H\"older regularity for homogeneous equations with locally translation invariant kernel}
\subsection{Incremental higher integrability and differentiability}
The key ingredient to proving the desired higher H\"older regularity for homogeneous equations with locally translation invariant kernel is provided by the following incremental higher integrability and differentiability result on the scale of Besov-type spaces. In the case of the fractional $p$-Laplacian for $p \geq 2$, the below result was proved in \cite[Proposition 5.1]{BLS}. Besides the fact that we treat equations with arbitrary locally translation invariant kernels, it is also interesting that in our setting of equations with linear growth, we are able to directly prove both higher integrability and differentiability, while for possibly degenerate equations as in \cite{BLS} it is necessary to first obtain a pure higher integrability result (cf. \cite[Proposition 4.1]{BLS}), which is then used in order to also obtain higher differentiability. We remark that this additional higher differentiability does not seem to have a counterpart in the context of local equations and is one of the main reasons why in our nonlocal setting we are able to exceed $C^s$ regularity. \newline Moreover, note that although at this point we work with solutions that are bounded, this assumption will later be removed by using Theorem \ref{finnish}. 
\begin{prop} \label{increment}
Let $u \in W^{s,2}(B_1) \cap L^1_{2s}(\mathbb{R}^n)\cap L^\infty(B_1)$ be a local weak solution of
\begin{equation} \label{eq871}
L_A^\Phi u=0 \text{ in } B_1,
\end{equation}
where $A \in \mathcal{L}_1(\lambda,B_1)$ and $\Phi$ satisfies (\ref{PhiLipschitz}) and (\ref{PhiMonotone}). Suppose that 
\begin{equation} \label{bounds1}
||u||_{L^\infty(B_1)} \leq 1, \quad \int_{\mathbb{R}^n \setminus B_1} \frac{|u(y)|}{|y|^{n+2s}}dy \leq 1,
\end{equation}
and that for some $q \geq 2$, $\vartheta \in \mathbb{R}$ such that $0<(1+\vartheta q)/q<1$ and some $0<h_0<1$, we have
$$ \sup_{0<|h|<h_0} \left | \left | \frac{\delta_h^2 u}{ |h|^\frac{1+\vartheta q}{q}} \right | \right |_{L^{q}(B_{1})}^{q} < +\infty . $$
Then for any radius $4h_0<R \leq 1-2 h_0$, we have
$$ \sup_{0<|h|<h_0} \left | \left | \frac{\delta_h^2 u}{ |h|^\frac{1+2s +\vartheta q}{q+1}} \right | \right |_{L^{q+1}(B_{R-4h_0})}^{q+1} \leq C \left ( \sup_{0<|h|<h_0} \left | \left | \frac{\delta_h^2 u}{ |h|^\frac{1+\vartheta q}{q}} \right | \right |_{L^{q}(B_{R+4h_0})}^{q} + 1 \right ).$$
where $C=C(n,s,q,\lambda,h_0)>0$.
\end{prop}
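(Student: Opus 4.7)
The plan is to adapt the strategy of \cite[Proposition 5.1]{BLS} to our setting, using the local translation invariance of $A$ together with the pointwise monotonicity inequalities from Lemmas \ref{elementary1}--\ref{elementary2} in place of the pointwise inequalities available for the fractional $p$-Laplacian.

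\emph{Step 1: differentiated equation.} For $|h|<h_0$ and any $\psi\in W^{s,2}_c(B_{R+h_0})$, the shifted function $\psi_{-h}$ is supported in $B_{R+2h_0}\Subset B_1$ and hence also admissible. I test the equation (\ref{eq871}) once with $\psi$ and once with $\psi_{-h}$, perform the change of variables $(x,y)\mapsto(x-h,y-h)$ in the second identity, and exploit that $A(x+h,y+h)=a(x-y)=A(x,y)$ whenever $\{x,y,x+h,y+h\}\subset B_1$. Subtracting and isolating the region where local translation invariance fails yields
\[
\iint \tfrac{A(x,y)}{|x-y|^{n+2s}}\bigl[\Phi(u_h(x)-u_h(y))-\Phi(u(x)-u(y))\bigr](\psi(x)-\psi(y))\,dxdy \;=\; \mathcal{R}_h(\psi),
\]
where $\mathcal{R}_h(\psi)$ collects the integrations over $\{(x,y):\{x+h,y+h\}\not\subset B_1\}$ intersected with the support conditions imposed by $\psi$; using $A\le\lambda$, the Lipschitz bound (\ref{PhiLipschitz}) and the assumptions (\ref{bounds1}), this remainder is controllable uniformly in $h$.

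\emph{Step 2: test function and pointwise monotonicity.} I pick a cutoff $\eta\in C_c^\infty(B_{R+h_0})$ with $\eta\equiv 1$ on $B_{R-4h_0}$, $0\le\eta\le 1$, $|\nabla\eta|\lesssim h_0^{-1}$, and use $\psi:=\eta^2 J_{q+1}(\delta_h u)$, which belongs to $W^{s,2}_c(B_{R+h_0})$ since $u\in L^\infty(B_1)$. The identity
\[
\psi(x)-\psi(y) = \tfrac{\eta(x)^2+\eta(y)^2}{2}\bigl(J_{q+1}(\delta_h u(x))-J_{q+1}(\delta_h u(y))\bigr) + \tfrac{\eta(x)^2-\eta(y)^2}{2}\bigl(J_{q+1}(\delta_h u(x))+J_{q+1}(\delta_h u(y))\bigr)
\]
splits the LHS into a principal part and a cross term. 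Applying Lemma \ref{elementary2} with $a=u(x)$, $b=u_h(x)$, $c=u(y)$, $d=u_h(y)$ to the principal part, and writing $w:=J_{(q+3)/2}(\delta_h u)=|\delta_h u|^{(q-1)/2}\delta_h u$, I obtain
\[
\iint \tfrac{A(x,y)(\eta(x)^2+\eta(y)^2)}{|x-y|^{n+2s}}|w(x)-w(y)|^2\,dxdy \;\lesssim\; |\text{cross term}|+|\mathcal{R}_h(\psi)|.
\]
The cross term is handled via (\ref{PhiLipschitz}), the elementary bound $|\eta(x)^2-\eta(y)^2|\lesssim h_0^{-1}|x-y|(\eta(x)+\eta(y))$, and Young's inequality with exponent $(2,2)$; a suitable split absorbs part of it into the principal quantity, the leftover being bounded by $h_0^{-2}\int |\delta_h u|^{q+1}dy$ (the integral of $|x-y|^{2-(n+2s)}$ over a bounded region is finite because $s<1$). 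Since $\|u\|_{L^\infty(B_1)}\le 1$, one has $\int|\delta_h u|^{q+1}\le 2\|\delta_h u\|_{L^q(B_{R+4h_0})}^q$, and the assumed bound combined with Lemma \ref{embedding5} (applied with $\beta:=(1+\vartheta q)/q\in(0,1)$) gives
\[
\|\delta_h u\|_{L^q(B_{R+4h_0})}^q\le C|h|^{1+\vartheta q}\,\Bigl(\sup_{0<|k|<h_0}\bigl\|\,|k|^{-(1+\vartheta q)/q}\delta_k^2 u\bigr\|_{L^q(B_{R+4h_0})}^q+1\Bigr).
\]
A parallel bound for $|\mathcal{R}_h(\psi)|$ follows from (\ref{bounds1}), (\ref{PhiLipschitz}) and the regularity of $\psi$. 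Using $A\ge\lambda^{-1}$, combination of these estimates yields
\[
[\eta w]_{W^{s,2}(\mathbb{R}^n)}^2 \;\le\; C|h|^{1+\vartheta q}\bigl(\mathrm{RHS}+1\bigr).
\]

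\emph{Step 3: passing from $w$ to $\delta_h^2 u$.} By the first part of Proposition \ref{diffsobolev} applied to $\eta w\in W^{s,2}_0(B_{R+h_0})$, we have $\sup_{|k|>0}\||k|^{-s}\delta_k(\eta w)\|_{L^2(\mathbb{R}^n)}^2\le C[\eta w]^2_{W^{s,2}}$. Choosing $k=h$ and restricting to $B_{R-4h_0}$ (where $\eta\equiv 1$) yields $\|\delta_h w\|_{L^2(B_{R-4h_0})}^2\le C|h|^{2s}[\eta w]^2_{W^{s,2}}$. Finally, Lemma \ref{elementary0} with $p=(q+1)/2\ge 3/2$ gives the pointwise inequality
\[
|\delta_h w(x)|^2 \;=\;\bigl|J_{(q+3)/2}(\delta_h u(x+h))-J_{(q+3)/2}(\delta_h u(x))\bigr|^2\;\ge\; C^{-1}|\delta_h^2 u(x)|^{q+1}.
\]
Integrating over $B_{R-4h_0}$ and combining with the previous two estimates produces $\|\delta_h^2 u\|_{L^{q+1}(B_{R-4h_0})}^{q+1}\le C|h|^{1+2s+\vartheta q}(\mathrm{RHS}+1)$, which is the claim after dividing by $|h|^{1+2s+\vartheta q}$ and taking the supremum.

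\emph{Main obstacles.} The delicate part is the sharp bookkeeping of the $|h|$-dependence: both the cross term and $|\mathcal{R}_h(\psi)|$ must each be controlled by $C|h|^{1+\vartheta q}(\mathrm{RHS}+1)$, which is why the Besov-type assumption on $\delta_h^2 u$ must be converted into an integrability bound on the first difference $\delta_h u$ by means of Lemma \ref{embedding5}. Equally delicate is the remainder $\mathcal{R}_h(\psi)$: the identity $A(x+h,y+h)=A(x,y)$ holds only when all four points lie in $B_1$, and this is the point at which the tail assumption $\|u\|_{L^1_{2s}}\le 1$ enters essentially and keeps the constants uniform in $h$. This complication does not appear in the case $A\equiv 1$ treated in \cite{BLS}, and is where the genuine additional difficulty of working with a general locally translation invariant kernel lies.
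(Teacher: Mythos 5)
The proposal follows the same overall strategy as the paper: test (\ref{eq871}) with $\psi$ and with $\psi_{-h}$, exploit $A_h=A$ on the locally translation invariant region, split $\psi(x)-\psi(y)$ into a principal and a cross term, invoke Lemmas \ref{elementary1}--\ref{elementary2}, then pass through Proposition \ref{diffsobolev} and Lemma \ref{elementary0} and finally convert first-order to second-order differences. However, there are several real gaps in the execution.

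First, your cutoff $\eta$ is supported in $B_{R+h_0}$, whereas the paper takes $\eta$ supported in $B_{(R+r)/2}=B_{R-2h_0}\subset B_R$. This has two consequences you have not addressed. (i) The paper first decomposes the double integral into $I_1$ over $B_R\times B_R$ and tail terms $I_2,I_3$ over $B_{(R+r)/2}\times(\mathbb{R}^n\setminus B_R)$, and only performs the symmetrization of $\psi(x)-\psi(y)$ \emph{inside} $I_1$, where $u$ and $u_h$ are bounded by $1$. If you apply the identity for $\psi(x)-\psi(y)$ before restricting to a bounded region, the principal and cross integrals each contain the factor $J_{q+1}(\delta_h u(y))$ for $|y|$ large; since $u$ is only assumed to be in $L^1_{2s}(\mathbb{R}^n)$ off $B_1$, each of these pieces can individually fail to be absolutely convergent (already for $q=2$), so the inequality ``$\mathrm{principal}\lesssim|\mathrm{cross}|+|\mathcal{R}_h|$'' is not well posed as stated. (ii) With $\eta$ supported up to $B_{R+h_0}$ and $R$ allowed to be as large as $1-2h_0$, the support of $\psi$ touches $\partial B_{1-h_0}$, which is exactly the boundary of the region where $A_h=A$ is guaranteed. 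The clean polar-coordinate bound $\int_{\mathbb{R}^n\setminus B_R}|x-y|^{-(n+2s)}dy\lesssim h_0^{-2s}$ used in the paper relies on the $2h_0$ gap between $\operatorname{supp}\eta$ and the tail region and does not transfer to your configuration without extra work (one can rescue it via the vanishing of $\eta^2$ at the boundary, but you do not do this).

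Second, in Step 3 you claim that choosing $k=h$ and restricting to $B_{R-4h_0}$, where $\eta\equiv1$, yields $\|\delta_h w\|_{L^2(B_{R-4h_0})}\le \|\delta_h(\eta w)\|_{L^2(\mathbb{R}^n)}$. This requires $\eta(x)=\eta(x+h)=1$ for all $x\in B_{R-4h_0}$ and $|h|<h_0$, i.e. $\eta\equiv1$ on $B_{R-3h_0}$, which your construction does not provide. The paper avoids this by bounding $\int_{B_r}|\delta_\xi w|^2\le\int_{\mathbb{R}^n}\eta^2|\delta_\xi w|^2$ and then applying the discrete Leibniz rule $\eta\,\delta_\xi w=\delta_\xi(\eta w)-w_\xi\,\delta_\xi\eta$, which produces the expected extra cross-term that you omit.

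Finally, Lemma \ref{embedding5} is a \emph{global} embedding $\mathcal{B}^{\beta,q}_\infty(\mathbb{R}^n)\hookrightarrow\mathcal{N}^{\beta,q}_\infty(\mathbb{R}^n)$; the conversion from first- to second-order difference quotients on a ball $B_R\to B_{R+h_0}$, needed after your version of (\ref{almostdone}), is the localized inequality \cite[Lemma~2.6]{BLS}, which the paper invokes explicitly. These are all repairable, but as written the proposal skips the precise cutoff bookkeeping and decomposition order that make the argument rigorous.
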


\begin{proof}
\textbf{Step 1: Discrete differentiation of the equation.}
Set $r:=R-4h_0>0$ and fix some $h \in \mathbb{R}^n$ such that $0<|h|<h_0$. 
Let $\eta \in C_0^\infty(B_R)$ be a non-negative Lipschitz cutoff function satisfying
$$ \eta \equiv 1 \text{ in } B_r, \quad \eta \equiv 0 \text{ in } \mathbb{R}^n \setminus B_{(R+r)/2}, \quad |\nabla \eta| \leq \frac{C_1}{R-r}=\frac{C_1}{4h_0}.$$
Let us show that the function
$$ \varphi=J_{q+1} \left (\frac{\delta_h u}{|h|^\vartheta} \right ) \eta^2 = \left | \frac{\delta_h u}{|h|^\vartheta}\right |^{q-1}  \frac{\delta_h u}{|h|^\vartheta} \eta^2 $$
belongs to $W^{s,2}(B_R)$.
Since $||u||_{L^\infty(B_1)} \leq 1$ implies $||u||_{L^\infty(B_R)} \leq 1$ and also $||u_h||_{L^\infty(B_R)} \leq 1$, we have $||\varphi||_{L^\infty(B_R)} \leq \frac{2^q}{|h|^\vartheta}$ and therefore $\varphi \in L^\infty(B_R) \subset L^2(B_R)$. Moreover, note that the function $t \mapsto J_{q+1}(t)$ is Lipschitz continuous on the domain $t \in [-2,2]$ with Lipschitz constant $q2^{q-1}$. Therefore, since we have $||\delta_h u||_{L^\infty(B_R)} \leq 2$, we obtain
\begin{align*}
\int_{B_R} \int_{B_R} \frac{|J_{q+1}(\delta_h u(x))-J_{q+1}(\delta_h u(y))|^2}{|x-y|^{n+2s}}dydx & \leq C_2 \int_{B_R} \int_{B_R} \frac{|\delta_h u(x)-\delta_h u(y)|^2}{|x-y|^{n+2s}}dydx\\ & \leq 2C_2 ([u_h]_{W^{s,2}(B_R)}^2 + [u]_{W^{s,2}(B_R)}^2) < \infty,
\end{align*}
where $C_2=C_2(q)>0$, so that $J_{q+1}(\delta_h u) \in W^{s,2}(B_R)$. Thus, since the product of a function belonging to $W^{s,2}(B_R)$ and a Lipschitz function also belongs to $W^{s,2}(B_R)$ (cf. \cite[Lemma 5.3]{Hitch}), $\varphi = J_{q+1}(\delta_h u) \frac{\eta^2}{|h|^{\vartheta q}}$ also belongs to $W^{s,2}(B_R)$. \newline
Next, consider the function $\varphi_{-h}(x):=\varphi(x-h)$. Since both $\varphi$ and $\varphi_{-h}$ belong to $W^{s,2}(B_{R-h_0})$ and are compactly supported in $B_{R-h_0}$, in view of \cite[Lemma 2.11]{BLS} in particular both $\varphi$ and $\varphi_{-h}$ belong to $W^{s,2}_c(B_1)$, so that both $\varphi$ and $\varphi_{-h}$ are admissible test functions in (\ref{eq871}). Therefore, using $\varphi_{-h}$ as a test function in (\ref{eq871}) along with a change of variables yields
\begin{equation} \label{trest}
\begin{aligned}
0=& \int_{\mathbb{R}^n} \int_{\mathbb{R}^n} \frac{A(x,y)}{|x-y|^{n+2s}} \Phi(u(x)-u(y))(\varphi_{-h}(x)-\varphi_{-h}(y))dydx \\
= & \int_{\mathbb{R}^n} \int_{\mathbb{R}^n} \frac{A_h(x,y)}{|x-y|^{n+2s}} \Phi(u_h(x)-u_h(y))(\varphi(x)-\varphi(y))dydx ,
\end{aligned}
\end{equation}
where we have set $A_h(x,y):=A(x+h,y+h)$.
Moreover, testing (\ref{eq871}) with $\varphi$ yields
\begin{equation} \label{trest1}
\int_{\mathbb{R}^n} \int_{\mathbb{R}^n} \frac{A(x,y)}{|x-y|^{n+2s}} \Phi(u(x)-u(y))(\varphi(x)-\varphi(y))dydx=0.
\end{equation}
By subtracting (\ref{trest1}) from (\ref{trest}) and dividing by $0<|h|<h_0$,
we obtain
\begin{equation} \label{trest2}
\int_{\mathbb{R}^n} \int_{\mathbb{R}^n} \frac{A_h(x,y)\Phi(u_h(x)-u_h(y))-A(x,y)\Phi(u(x)-u(y))}{|h||x-y|^{n+2s}}(\varphi(x)-\varphi(y))dydx=0.
\end{equation}
Next, splitting the above integral and taking into account the choice of $\varphi$, we arrive at $$I_1+I_2+I_3=0,$$ where
\begin{align*}
I_1 :=\int_{B_R} \int_{B_R} & \frac{A_h(x,y)\Phi(u_h(x)-u_h(y))-A(x,y)\Phi(u(x)-u(y))}{|h|^{1+\vartheta q}|x-y|^{n+2s}} \\
& \times \left ( J_{q+1}(u_h(x)-u(x))\eta(x)^2 - J_{q+1}(u_h(y)-u(y))\eta(y)^2 \right ) dydx,
\end{align*}
\begin{align*}
I_2 :=\int_{B_{\frac{R+r}{2}}} \int_{\mathbb{R}^n \setminus B_R} & \frac{A_h(x,y)\Phi(u_h(x)-u_h(y))-A(x,y)\Phi(u(x)-u(y))}{|h|^{1+\vartheta q}|x-y|^{n+2s}} \\
& \times J_{q+1}(u_h(x)-u(x))\eta(x)^2 dydx,
\end{align*}
\begin{align*}
I_3 :=-\int_{\mathbb{R}^n \setminus B_R} \int_{B_{\frac{R+r}{2}}} & \frac{A_h(x,y)\Phi(u_h(x)-u_h(y))-A(x,y)\Phi(u(x)-u(y))}{|h|^{1+\vartheta q}|x-y|^{n+2s}} \\
& \times J_{q+1}(u_h(y)-u(y))\eta(y)^2 dydx,
\end{align*}
where we used that $\eta$ vanishes identically outside of $B_{(R+r)/2}$. \newline
\textbf{Step 2: Preliminary estimation of the local term $I_1$.}
Since $A \in \mathcal{L}_1(\lambda,B_1)$, we have $A(x,y)=a(x-y)$ for all $x,y \in B_1$ and some measurable function $a:\mathbb{R}^n \to \mathbb{R}$. Since for $x,y \in B_R$ we have $x+h,y+h \in B_1$, it follows that for all $x,y \in B_R$ we have $$ A_h(x,y)=A(x+h,y+h)=a((x+h)-(y+h)))=a(x-y)=A(x,y).$$
Therefore, we can rewrite $I_1$ as follows 
\begin{align*}
I_1 =\int_{B_R} \int_{B_R} & \frac{A(x,y)(\Phi(u_h(x)-u_h(y))-\Phi(u(x)-u(y)))}{|h|^{1+\vartheta q}|x-y|^{n+2s}} \\
& \times \left ( J_{q+1}(u_h(x)-u(x))\eta(x)^2 - J_{q+1}(u_h(y)-u(y))\eta(y)^2 \right ) dydx.
\end{align*}
Let us now concentrate on estimating $I_1$. First of all, we observe that 
\begin{align*}
& J_{q+1}(u_h(x)-u(x))\eta(x)^2-J_{q+1}(u_h(y)-u(y))\eta(y)^2 \\
= & \frac{(J_{q+1}(u_h(x)-u(x))-J_{q+1}(u_h(y)-u(y)))}{2} (\eta(x)^2+\eta(y)^2) \\
& + \frac{(J_{q+1}(u_h(x)-u(x))+J_{q+1}(u_h(y)-u(y)))}{2} (\eta(x)^2-\eta(y)^2).
\end{align*}
Therefore, we obtain
\begin{align*}
& (\Phi(u_h(x)-u_h(y))-\Phi(u(x)-u(y)) ) \left ( J_{q+1}(u_h(x)-u(x))\eta(x)^2-J_{q+1}(u_h(y)-u(y))\eta(y)^2 \right ) \\
\geq & (\Phi(u_h(x)-u_h(y))-\Phi(u(x)-u(y)) ) \\
& \times (J_{q+1}(u_h(x)-u(x))-J_{q+1}(u_h(y)-u(y))) \frac{(\eta(x)^2+\eta(y)^2)}{2} \\
& - |\Phi(u_h(x)-u_h(y))-\Phi(u(x)-u(y))| (|u_h(x)-u(x)|^q +|u_h(y)-u(y)|^q) \left | \frac{\eta(x)^2-\eta(y)^2}{2} \right |.
\end{align*}
Next, using the Lipschitz bound (\ref{PhiLipschitz}), Young's inequality and then Lemma \ref{elementary1}, for the negative term in the last display we deduce
\begin{align*}
& |\Phi(u_h(x)-u_h(y))-\Phi(u(x)-u(y))| (|u_h(x)-u(x)|^q +|u_h(y)-u(y)|^q) \left | \frac{\eta(x)^2-\eta(y)^2}{2} \right | \\
\leq & \frac{\lambda}{2} \text{ } |(u_h(x)-u_h(y))-(u(x)-u(y))| \\ & \times (|u_h(x)-u(x)|^{\frac{q-1}{2}}|u_h(x)-u(x)|^{\frac{q+1}{2}} +|u_h(y)-u(y)|^{\frac{q-1}{2}}|u_h(y)-u(y)|^{\frac{q+1}{2}}) \\ & \times (\eta(x)+\eta(y)) \left |\eta(x)-\eta(y) \right | \\
\leq & \frac{\lambda}{4\varepsilon} \left (|u_h(x)-u(x)|^{q+1} + |u_h(y)-u(y)|^{q+1} \right )\left |\eta(x)-\eta(y) \right |^2 \\
& + \frac{\lambda}{2} \varepsilon |(u_h(x)-u_h(y))-(u(x)-u(y))|^2 \left (|u_h(x)-u(x)|^{q-1} + |u_h(y)-u(y)|^{q-1} \right ) \\
& \times (\eta(x)^2+\eta(y)^2) \\
\leq & \frac{\lambda}{4\varepsilon} \left (|u_h(x)-u(x)|^{q+1} + |u_h(y)-u(y)|^{q+1} \right )\left |\eta(x)-\eta(y) \right |^2 \\
& + \lambda^2 \varepsilon (\Phi(u_h(x)-u_h(y))-\Phi(u(x)-u(y)))  \\ &\times (J_{q+1}(u_h(x)-u(x))-J_{q+1}(u_h(y)-u(y)))
(\eta(x)^2+\eta(y)^2),
\end{align*}
where $\varepsilon>0$ is arbitrary. By choosing $\varepsilon :=\frac{1}{2\lambda^2}$, combining the last two displays yields
\begin{align*}
I_1 \geq \frac{1}{4} \int_{B_R} \int_{B_R} & \frac{A(x,y)(\Phi(u_h(x)-u_h(y))-\Phi(u(x)-u(y)))}{|h|^{1+\vartheta q}|x-y|^{n+2s}} \\
& \times (J_{q+1}(u_h(x)-u(x))-J_{q+1}(u_h(y)-u(y)))
(\eta(x)^2+\eta(y)^2) dydx \\
- C_3 \int_{B_R}& \int_{B_R} \frac{A(x,y)\left(|u_h(x)-u(x)|^{q+1} + |u_h(y)-u(y)|^{q+1} \right )\left |\eta(x)-\eta(y) \right |^2}{|h|^{1+\vartheta q}|x-y|^{n+2s}} dydx,
\end{align*}
where $C_3=C_3(\lambda)>0$.
By using Lemma \ref{elementary2}, we can further estimate the first term of the previous display, which along with the bounds (\ref{eq1}) of $A$ leads to
\begin{equation} \label{I1bound}
\begin{aligned}
I_1 \geq c \int_{B_R} \int_{B_R} & \left | \frac{|\delta_h u(x)|^{\frac{q-1}{2}}\delta_h u(x)}{|h|^\frac{1+\vartheta q}{2}} - \frac{|\delta_h u(y)|^{\frac{q-1}{2}}\delta_h u(y)}{|h|^\frac{1+\vartheta q}{2}} \right |^2 
\frac{\eta(x)^2+\eta(y)^2}{|x-y|^{n+2s}} dydx \\
- C_4 \int_{B_R}& \int_{B_R} \frac{\left(|\delta_h u(x)|^{q+1} + |\delta_h u(y)|^{q+1} \right )\left |\eta(x)-\eta(y) \right |^2}{|h|^{1+\vartheta q}|x-y|^{n+2s}} dydx,
\end{aligned}
\end{equation}
where $c=c(\lambda,q)>0$ and $C_4=C_4(\lambda)>0$. Next, for simplicity we write
$$X:=\frac{|\delta_h u(x)|^{\frac{q-1}{2}}\delta_h u(x)}{|h|^\frac{1+\vartheta q}{2}} \quad \text{and} \quad Y:= \frac{|\delta_h u(y)|^{\frac{q-1}{2}}\delta_h u(y)}{|h|^\frac{1+\vartheta q}{2}}$$
and observe that by using the convexity of the function $t \mapsto t^2$, we obtain
\begin{align*}
|X \eta(x)-Y \eta(y)|^2 = & \left | (X-Y) \frac{\eta(x)+\eta(y)}{2}+(X+Y) \frac{\eta(x)-\eta(y)}{2} \right |^2 \\
\leq & \frac{1}{2} |X-Y|^2 |\eta(x)+\eta(y)|^2 + \frac{1}{2} |X+Y|^2 |\eta(x)-\eta(y)|^2 \\
\leq & |X-Y|^2 (\eta(x)^2 + \eta(y)^2) + (X^2+Y^2) |\eta(x)-\eta(y)|^2.
\end{align*}
Combining (\ref{I1bound}) with the last display yields
\begin{align*}
I_1 \geq & c \int_{B_R} \int_{B_R} \left | \frac{|\delta_h u(x)|^{\frac{q-1}{2}}\delta_h u(x)}{|h|^\frac{1+\vartheta q}{2}} \eta(x) - \frac{|\delta_h u(y)|^{\frac{q-1}{2}}\delta_h u(y)}{|h|^\frac{1+\vartheta q}{2}} \eta(y) \right |^2 \frac{1}{|x-y|^{n+2s}}dydx \\
& - c \int_{B_R} \int_{B_R} \left (\frac{|\delta_h u(x)|^{q+1}}{|h|^{1+\vartheta q}} + \frac{|\delta_h u(y)|^{q+1}}{|h|^{1+\vartheta q}} \right ) \frac{|\eta(x)-\eta(y)|^2}{|x-y|^{n+2s}} dydx \\ 
& - C_4 \int_{B_R} \int_{B_R} \frac{\left(|\delta_h u(x)|^{q+1} + |\delta_h u(y)|^{q+1} \right )\left |\eta(x)-\eta(y) \right |^2}{|h|^{1+\vartheta q}|x-y|^{n+2s}} dydx \\
= & c \left [\frac{|\delta_h u|^{\frac{q-1}{2}}\delta_h u}{|h|^\frac{1+\vartheta q}{2}} \eta \right ]_{W^{s,2}(B_R)}^2 \\
& - C_5 \int_{B_R} \int_{B_R} \left (\frac{|\delta_h u(x)|^{q+1}}{|h|^{1+\vartheta q}} + \frac{|\delta_h u(y)|^{q+1}}{|h|^{1+\vartheta q}} \right ) \frac{|\eta(x)-\eta(y)|^2}{|x-y|^{n+2s}} dydx,
\end{align*}
where $C_5=C_5(\lambda,q)>0$. By combining the above estimate for $I_1$ with the identity $I_1+I_2+I_3=0$, we arrive at
\begin{equation} \label{firstest}
\left [\frac{|\delta_h u|^{\frac{q-1}{2}}\delta_h u}{|h|^\frac{1+\vartheta q}{2}} \eta \right ]_{W^{s,2}(B_R)}^2 \leq C_6 (I_{1,1} + |I_2|+|I_3|),
\end{equation}
where $C_6=C_6(\lambda,q)>0$ and
$$ I_{1,1}:= \int_{B_R} \int_{B_R} \left (\frac{|\delta_h u(x)|^{q+1}}{|h|^{1+\vartheta q}} + \frac{|\delta_h u(y)|^{q+1}}{|h|^{1+\vartheta q}} \right ) \frac{|\eta(x)-\eta(y)|^2}{|x-y|^{n+2s}} dydx.$$
Our next goal is to estimate the terms $I_{1,1}$, $|I_2|$ and $|I_3|$. \newline
\textbf{Step 3: Estimating the local term $I_{1,1}$.}
In order to estimate $I_{1,1}$, observe that for any $x \in B_R$ changing variables and integrating in polar coordinates yields
\begin{equation} \label{intpolar}
\int_{B_{R}} \frac{dy}{|x-y|^{n+2s-2}} \leq \int_{B_{2R}} \frac{dz}{|z|^{n+2s-2}} = C_7 R^{2-2s} \leq C_7,
\end{equation}
where $C_7=C_7(n,s) >0$.
Since by construction $\eta$ is Lipschitz with Lipschitz constant $\frac{C_1}{4h_0}$, along with (\ref{intpolar}) we obtain
\begin{align*}
\int_{B_R} \int_{B_R} \frac{|\delta_h u(x)|^{q+1}}{|h|^{1+\vartheta q}} \frac{|\eta(x)-\eta(y)|^2}{|x-y|^{n+2s}} dydx \leq & \left(\frac{C_1}{4h_0}\right )^2 \int_{B_R} \left (\int_{B_{R}} \frac{dy}{|x-y|^{n+2s-2}} \right)\frac{|\delta_h u(x)|^{q+1}}{|h|^{1+\vartheta q}} dx \\
\leq & C_7 \left(\frac{C_1}{4h_0}\right )^2 \int_{B_R} \frac{|\delta_h u(x)|^{q+1}}{|h|^{1+\vartheta q}} dx \\
\leq & C_8 ||u||_{L^\infty(B_{R+h_0})} \int_{B_R} \frac{|\delta_h u(x)|^{q}}{|h|^{1+\vartheta q}} dx \\ & \leq C_8 \int_{B_R} \frac{|\delta_h u(x)|^{q}}{|h|^{1+\vartheta q}} dx ,
\end{align*}
where we used that $R+h_0 \leq 1$ and $||u||_{L^\infty(B_{1})}\leq 1$ in order to obtain the last inequality and $C_8=C_8(n,s,q,\lambda,h_0)>0$. In the same way we have 
\begin{align*}
\int_{B_R} \int_{B_R} \frac{|\delta_h u(y)|^{q+1}}{|h|^{1+\vartheta q}} \frac{|\eta(x)-\eta(y)|^2}{|x-y|^{n+2s}} dydx \leq C_8 \int_{B_R} \frac{|\delta_h u(y)|^{q}}{|h|^{1+\vartheta q}} dy ,
\end{align*}
so that we obtain
\begin{equation} \label{I11}
I_{1,1} \leq 2C_8 \int_{B_R} \frac{|\delta_h u(x)|^{q}}{|h|^{1+\vartheta q}} dx. 
\end{equation}
\textbf{Step 4: Estimating the nonlocal terms $I_2$ and $I_3$.}
Next, let us estimate the nonlocal terms $I_2$ and $I_3$, which can be treated in the same way. Since $||u||_{L^\infty(B_{1})}\leq 1$ and $(R+r)/2+h_0 \leq 1$, by additionally using the bound (\ref{PhiLipschitz}) of $\Phi$ with $t=u_h(x)-u_h(y)$ and $t^\prime=0$, for almost every $x \in B_{(R-r)/2}$ and any $y \in \mathbb{R}^n \setminus B_R$ we have
\begin{align*}
\left |\Phi(u_h(x)-u_h(y)) J_{q+1}(\delta_h u(x)) \right | \leq & \lambda \left(||u||_{L^\infty(B_{(R+r)/2+h_0})}+|u_h(y)| \right) |\delta_h u(x)|^q \\
\leq & \lambda \left(1+|u_h(y)| \right) |\delta_h u(x)|^q
\end{align*}
and similarly
\begin{align*}
\left |\Phi(u(x)-u(y)) J_{q+1}(\delta_h u(x)) \right |
\leq \lambda \left(1+|u(y)| \right) |\delta_h u(x)|^q.
\end{align*} 
By using the upper bound in (\ref{eq1}) of $A$ (which trivially also holds for $A_h$) and the fact that $0 \leq \eta \leq 1$ and then the last two displays, we deduce
\begin{equation} \label{critest}
\begin{aligned}
|I_2| \leq & \lambda \int_{B_{\frac{R+r}{2}}} \int_{\mathbb{R}^n \setminus B_R} \frac{\left (\left |\Phi(u_h(x)-u_h(y)) \right |+\left |\Phi(u(x)-u(y)) \right | \right ) \left |J_{q+1}(\delta_h u(x)) \right |}{|h|^{1+\vartheta q}|x-y|^{n+2s}} dydx \\
\leq & 2 \lambda^2 \int_{B_{\frac{R+r}{2}}} \int_{\mathbb{R}^n \setminus B_R} \frac{\left(1+|u_h(y)| +|u(y)| \right) |\delta_h u(x)|^q}{|h|^{1+\vartheta q}|x-y|^{n+2s}} dydx.
\end{aligned}
\end{equation}
For any $x \in B_{(R+r)/2)}$, we have $B_{(R-r)/2}(x) \subset B_R$, which in view of integration in polar coordinates along with the fact that $R-r=4h_0$ leads to
\begin{align*}
\int_{\mathbb{R}^n \setminus B_R} \frac{dy}{|x-y|^{n+2s}} \leq \int_{\mathbb{R}^n \setminus B_{\frac{R-r}{2}}(x)} \frac{dy}{|x-y|^{n+2s}} = \int_{\mathbb{R}^n \setminus B_{\frac{R-r}{2}}} \frac{dz}{|z|^{n+2s}}
 = C_9 \left (\frac{R-r}{2} \right )^{-2s}=C_{10},
\end{align*}
where $C_9=C_9(n,s)>0$ and $C_{10}= C_9(2h_0)^{-2s}$.
Using Lemma \ref{tailestz}, the change of variables $z=y+h$ and then Lemma \ref{tail}, for any $x \in B_{(R+r)/2}$ we obtain
\begin{align*}
\int_{\mathbb{R}^n \setminus B_R} \frac{|u_h(y)|}{|x-y|^{n+2s}}dy \leq &
\left (\frac{2R}{R-r} \right)^{n+2s} \int_{\mathbb{R}^n \setminus B_R} \frac{|u_h(y)|}{|y|^{n+2s}}dy \\ \leq & (2h_0)^{-(n+2s)} \int_{\mathbb{R}^n \setminus B_R(h)} \frac{|u(z)|}{|h-z|^{n+2s}}dz \\
\leq & (2h_0R)^{-(n+2s)} \left (||u||_{L^1(B_1)} + \int_{\mathbb{R}^n \setminus B_{1}} \frac{|u(z)|}{|z|^{n+2s}}dz \right) \\
\leq & (8h_0^2)^{-(n+2s)} \left (||u||_{L^\infty(B_1)}|B_1| + \int_{\mathbb{R}^n \setminus B_{1}} \frac{|u(z)|}{|z|^{n+2s}}dz \right ) \leq C_{11},
\end{align*}
where $C_{11}=C_{11}(n,s,h_0)>0$. Here we also used the the fact that $R>4h_0$ and the bounds imposed on $u$.
The term involving $u$ can be estimated similarly.
In fact, by using Lemma \ref{tailestz} and Lemma \ref{tail}, for any $x \in B_{(R+r)/2}$ we obtain
\begin{align*}
\int_{\mathbb{R}^n \setminus B_R} \frac{|u(y)|}{|x-y|^{n+2s}}dy
\leq & (2h_0)^{-(n+2s)} \int_{\mathbb{R}^n \setminus B_R} \frac{|u(y)|}{|y|^{n+2s}}dy \\
\leq & (8h_0^2)^{-(n+2s)} \left (||u||_{L^\infty(B_1)}|B_1| + \int_{\mathbb{R}^n \setminus B_{1}} \frac{|u(y)|}{|y|^{n+2s}}dy \right )
\leq C_{12},
\end{align*}
where $C_{12}=C_{12}(n,s,h_0)>0$.
By combining the above estimates with (\ref{critest}) and the observation that $|I_3|$ can be estimated in the same way, we arrive at
$$
|I_2|+|I_3| \leq C_{13} \int_{B_{\frac{R+r}{2}}} \frac{|\delta_h u(x)|^q}{|h|^{1+\vartheta q}} dx \leq C_{13} \int_{B_{R}} \frac{|\delta_h u(x)|^q}{|h|^{1+\vartheta q}} dx,
$$
where $C_{13}=C_{13}(n,s,\lambda,h_0)>0$. By combining this estimate with (\ref{I11}) and (\ref{firstest}), we find the estimate
\begin{equation} \label{impest}
\left [\frac{|\delta_h u|^{\frac{q-1}{2}}\delta_h u}{|h|^\frac{1+\vartheta q}{2}} \eta \right ]_{W^{s,2}(B_R)}^2 \leq C_{14} \int_{B_{R}} \frac{|\delta_h u(x)|^q}{|h|^{1+\vartheta q}} dx,
\end{equation}
where $C_{14}=C_{14}(n,s,q,\lambda,h_0)>0$. \newline
\textbf{Step 5: Conclusion.}
Let $\xi \in \mathbb{R}^n \setminus \{0\}$ to be chosen such that $|\xi|<h_0$. Applying Lemma \ref{elementary0} with 
$$ X=u(x+h+\xi)-u(x+\xi), \quad Y= u(x+h)-u(x), \quad p=\frac{q+1}{2}$$
leads to
$$ \left | \left | \frac{\delta_\xi \delta_h u}{|\xi|^\frac{2s}{q+1} |h|^\frac{1+\vartheta q}{q+1}} \right | \right |_{L^{q+1}(B_r)}^{q+1} \leq C_{15} \left | \left | \frac{\delta_\xi \left ( |\delta_h u|^\frac{q-1}{2} \delta_h u \right )}{|\xi|^s |h|^\frac{1+\vartheta q}{2}} \right | \right |_{L^{2}(B_r)}^{2} \leq C_{15} \left | \left |\eta \frac{\delta_\xi}{|\xi|^s} \left ( \frac{ |\delta_h u|^\frac{q-1}{2} \delta_h u }{ |h|^\frac{1+\vartheta q}{2}}\right ) \right | \right |_{L^{2}(\mathbb{R}^n)}^{2}, $$
where $C_{15}=C_{15}(q)>0$. Here we also used that $\eta \equiv 1$ in $B_r$ in order to obtain the last inequality. Next, we observe that by the discrete Leibniz rule (cf. \cite[Formula (2.1)]{BLS}), we can write
$$ \eta \delta_\xi \left ( |\delta_h u|^\frac{q-1}{2} \delta_h u \right ) = \delta_\xi \left (\eta |\delta_h u|^\frac{q-1}{2} \delta_h u \right ) - \left ( |\delta_h u|^\frac{q-1}{2} \delta_h u \right )_{\xi} \delta_\xi \eta.$$
We arrive at
\begin{equation} \label{diff1}
\begin{aligned}
& \left | \left | \frac{\delta_\xi \delta_h u}{|\xi|^\frac{2s}{q+1} |h|^\frac{1+\vartheta q}{q+1}} \right | \right |_{L^{q+1}(B_r)}^{q+1} \\ \leq & C_{16} \left | \left | \frac{\delta_\xi}{|\xi|^s} \left ( \frac{ |\delta_h u|^\frac{q-1}{2} (\delta_h u) \eta}{ |h|^\frac{1+\vartheta q}{2}}\right ) \right | \right |_{L^{2}(\mathbb{R}^n)}^{2} +  C_{16} \left | \left | \frac{\delta_\xi \eta}{|\xi|^s} \frac{\left ( |\delta_h u|^\frac{q-1}{2} \delta_h u \right )_\xi}{ |h|^\frac{1+\vartheta q}{2}} \right | \right |_{L^{2}(\mathbb{R}^n)}^{2}, 
\end{aligned}
\end{equation}
where $C_{16}=2C_{15}$. By applying the first part of Proposition \ref{diffsobolev} with 
$$\psi=\frac{ |\delta_h u|^\frac{q-1}{2} (\delta_h u) \eta}{ |h|^\frac{1+\vartheta q}{2}},$$
for the first term on the right-hand side of (\ref{diff1}) we obtain
\begin{align*}
\sup_{|\xi|>0} \left | \left | \frac{\delta_\xi}{|\xi|^s} \left ( \frac{ |\delta_h u|^\frac{q-1}{2} (\delta_h u) \eta}{ |h|^\frac{1+\vartheta q}{2}}\right ) \right | \right |_{L^{2}(\mathbb{R}^n)}^{2} \leq & C_{17}\left (\frac{R}{r} \right)^n \left(\frac{R}{R-r} \right )^{3} \left [\frac{|\delta_h u|^{\frac{q-1}{2}}\delta_h u}{|h|^\frac{1+\vartheta q}{2}} \eta \right ]_{W^{s,2}(B_R)}^2 \\
\leq & C_{18} \left [\frac{|\delta_h u|^{\frac{q-1}{2}}\delta_h u}{|h|^\frac{1+\vartheta q}{2}} \eta \right ]_{W^{s,2}(B_R)}^2,
\end{align*}
where $C_{17}=C_{17}(n,s)>0$ and $C_{18}=C_{18}(n,s,h_0)>0$. By using that $\eta$ is Lipschitz and that $\xi < h_0$, along with the assumption that $||u||_{L^\infty(B_1)} \leq 1$ we estimate the second term on the right-hand side of (\ref{diff1}) as follows
\begin{align*}
\left | \left | \frac{\delta_\xi \eta}{|\xi|^s} \frac{\left ( |\delta_h u|^\frac{q-1}{2} \delta_h u \right )_\xi}{ |h|^\frac{1+\vartheta q}{2}} \right | \right |_{L^{2}(\mathbb{R}^n)}^{2} \leq & C_{19} \left | \left | \frac{\left ( |\delta_h u|^\frac{q-1}{2} \delta_h u \right )_\xi}{ |h|^\frac{1+\vartheta q}{2}} \right | \right |_{L^{2}(B_{\frac{R+r}{2}+h_0})}^{2} \\
\leq & C_{19} \int_{B_{\frac{R+r}{2}+2h_0}} 
\frac{|\delta_h u(x)|^{q+1}}{|h|^{1+\vartheta q}} dx \leq C_{19} \int_{B_{R}} \frac{|\delta_h u(x)|^q}{|h|^{1+\vartheta q}} dx,
\end{align*}
where $C_{19}=C_{19}(n,h_0)>0$. Therefore, we arrive at
$$ \left | \left | \frac{\delta_h^2 u}{|\xi|^\frac{2s}{q+1} |h|^\frac{1+\vartheta q}{q+1}} \right | \right |_{L^{q+1}(B_r)}^{q+1} \leq C_{20} \left [\frac{|\delta_h u|^{\frac{q-1}{2}}\delta_h u}{|h|^\frac{1+\vartheta q}{2}} \eta \right ]_{W^{s,2}(B_R)}^2 +C_{20}  \int_{B_{R}} \frac{|\delta_h u(x)|^q}{|h|^{1+\vartheta q}} dx,$$
where $C_{20}=C_{20}(n,s,q,h_0)>0$. We now choose $\xi=h$ and take the supremum over $h$ for $0<|h|<h_0$, so that together with (\ref{impest}) we obtain
\begin{equation} \label{almostdone}
\sup_{0<|h|<h_0} \left | \left | \frac{\delta_h^2 u}{ |h|^\frac{1+2s +\vartheta q}{q+1}} \right | \right |_{L^{q+1}(B_r)}^{q+1} \leq C_{21} \sup_{0<|h|<h_0} \left | \left | \frac{\delta_h u}{ |h|^\frac{1+\vartheta q}{q}} \right | \right |_{L^{q}(B_R)}^{q},
\end{equation}
where $C_{21}=C_{21}(n,s,q,h_0,\lambda)>0$. Next, we use the fact that by \cite[Lemma 2.6]{BLS} applied with $\beta=(1+\vartheta q)/q<1$, on the right-hand side of (\ref{almostdone}) we can replace the first-order difference quotient by a corresponding second-order difference quotient in the following way
$$ \sup_{0<|h|<h_0} \left | \left | \frac{\delta_h u}{ |h|^\frac{1+\vartheta q}{q}} \right | \right |_{L^{q}(B_R)}^{q} \leq C_{22} \left ( \sup_{0<|h|<h_0} \left | \left | \frac{\delta_h^2 u}{ |h|^\frac{1+\vartheta q}{q}} \right | \right |_{L^{q}(B_{R+h_0})}^{q} + ||u||_{L^q(B_{R+h_0})}^q \right ) ,$$
where $C_{22}=C_{22}(n,q,\vartheta,h_0)>0$.
By combining the last display with (\ref{almostdone}) and using that $||u||_{L^q(B_{R+h_0})}^q \leq ||u||_{L^\infty(B_1)}^q |B_{1}| \leq |B_{1}|$, we conclude that
$$\sup_{0<|h|<h_0} \left | \left | \frac{\delta_h^2 u}{ |h|^\frac{1+2s +\vartheta q}{q+1}} \right | \right |_{L^{q+1}(B_r)}^{q+1} \leq C \left ( \sup_{0<|h|<h_0} \left | \left | \frac{\delta_h^2 u}{ |h|^\frac{1+\vartheta q}{q}} \right | \right |_{L^{q}(B_{R+4h_0})}^{q} + 1 \right ) ,$$
where $C=C(n,s,q,\vartheta,h_0,\lambda)>0$. Since $r=R-4h_0$, the proof is finished.
\end{proof}
\subsection{An iteration argument}
We now use an iteration argument based on Proposition \ref{increment} in order to obtain the following higher H\"older regularity result.
\begin{thm} \label{modC2sreg}
	Let $R>0$, $x_0 \in \mathbb{R}^n$ and $\lambda \geq 1$. Consider a kernel coefficient $A \in \mathcal{L}_1(\lambda,B_R(x_0))$, suppose that $\Phi$ satisfies (\ref{PhiLipschitz}) and (\ref{PhiMonotone}) with respect to $\lambda$ and assume that $u \in W^{s,2}(B_{R}(x_0)) \cap L^1_{2s}(\mathbb{R}^n) \cap L^\infty(B_R(x_0))$ is a local weak solution of the equation $L_{A}^\Phi u = 0$ in $B_{R}(x_0)$. Then for any $0<\alpha<\min \left \{2s,1 \right\}$, we have
	\begin{equation} \label{C2sreg5}
	[u]_{C^{\alpha}(B_{R/2}(x_0))} \leq \frac{C}{R^\alpha} \left (||u||_{L^\infty(B_R(x_0))}+ R^{s-\frac{n}{2}} [u]_{W^{s,2}(B_{R}(x_0))} + R^{2s} \int_{\mathbb{R}^n \setminus B_{R}(x_0)} \frac{|u(y)|}{|x_0-y|^{n+2s}}dy \right ),
	\end{equation}
	where $C=C(n,s,\lambda,\alpha)>0$.
\end{thm}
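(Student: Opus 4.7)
The plan is to iterate Proposition \ref{increment} a finite number of times and then apply the Besov-type embeddings (Lemma \ref{embedding5} and Lemma \ref{Holderemb}) to obtain the desired $C^\alpha$ regularity. By translation and the scaling $v(x):=u(x_0+Rx)/M$, where
$$M:=\|u\|_{L^\infty(B_R(x_0))}+R^{s-n/2}[u]_{W^{s,2}(B_R(x_0))}+R^{2s}\int_{\mathbb{R}^n\setminus B_R(x_0)}\frac{|u(y)|}{|x_0-y|^{n+2s}}dy,$$
I will first reduce matters to the case $x_0=0$, $R=1$, with $\|v\|_{L^\infty(B_1)}\leq 1$, $[v]_{W^{s,2}(B_1)}\leq 1$ and $\int_{\mathbb{R}^n\setminus B_1}|v(y)||y|^{-(n+2s)}dy\leq 1$, and $v$ solving $L_{A'}^{\Phi'}v=0$ in $B_1$ with $A'\in\mathcal{L}_1(\lambda,B_1)$ and $\Phi'$ still satisfying (\ref{PhiLipschitz})--(\ref{PhiMonotone}). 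Local boundedness (Theorem \ref{finnish}) ensures that $v\in L^\infty(B_{7/8})$, justifying use of the proposition in the sequel.

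The second step provides the seed for the iteration. From $[v]_{W^{s,2}(B_1)}\leq 1$ and the second part of Proposition \ref{diffsobolev} one obtains $\sup_{|h|>0}\||h|^{-s}\delta_hv\|_{L^2(B_{3/4})}^2\leq C$, and then, by writing $\delta_h^2 v(x)=\delta_hv(x+h)-\delta_hv(x)$, the analogous second-order bound
$$\sup_{0<|h|<h_0}\bigl\| |h|^{-s}\delta_h^2 v\bigr\|_{L^2(B_{3/4})}^2\leq C$$
for any sufficiently small $h_0>0$. This is exactly the hypothesis of Proposition \ref{increment} at $q_0=2$, $\vartheta_0=(2s-1)/2$, so that $\beta_0:=(1+\vartheta_0q_0)/q_0=s\in(0,1)$.

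In the third step I iterate. Fix $\alpha<\min\{2s,1\}$, choose $h_0=h_0(n,s,\alpha)$ small, and apply Proposition \ref{increment} on a finite sequence of shrinking balls $B_{r_k}$ with $r_{k+1}=r_k-8h_0$. The exponents evolve by $q_{k+1}=q_k+1$ and the update $\beta_{k+1}'=(2s+\beta_kq_k)/(q_k+1)$. When $2s\leq 1$ this update stays in $(0,1)$ and $\beta_k=2s(k+1)/(k+2)\to 2s$, so after $K=K(n,s,\alpha)$ steps one achieves both $\beta_Kq_K>n$ and $\beta_K-n/q_K>\alpha$. When $2s>1$, the hypothesis $\beta<1$ of the proposition would eventually be violated, which is the principal technical obstacle. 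It is resolved by fixing $\eta\in(0,1-\alpha)$ and, whenever $\beta_{k+1}'$ would exceed $1-\eta$, replacing it by $\beta_{k+1}:=1-\eta$; this costs only a factor $h_0^{\beta_{k+1}'-\beta_{k+1}}$ thanks to the elementary bound
$$\sup_{|h|<h_0}\bigl\||h|^{-\beta}\delta_h^2 v\bigr\|_{L^q}\leq h_0^{\beta'-\beta}\sup_{|h|<h_0}\bigl\||h|^{-\beta'}\delta_h^2 v\bigr\|_{L^q}\qquad(\beta\leq\beta'),$$
and since only finitely many iterations are performed the product of these factors remains a constant depending only on $n,s,\lambda,\alpha$. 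In both cases one arrives at some $(\beta_\star,q_\star)$ with $\beta_\star<1$, $\beta_\star q_\star>n$ and $\beta_\star-n/q_\star>\alpha$, and a bound $\sup_{|h|<h_0}\||h|^{-\beta_\star}\delta_h^2 v\|_{L^{q_\star}(B_{1/2})}\leq C$.

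For the final step I multiply $v$ by a smooth cut-off supported in $B_{3/4}$ equal to $1$ on $B_{1/2}$; using $\|v\|_{L^\infty(B_{3/4})}\leq C$ from Theorem \ref{finnish} and splitting $\delta_h^2(\eta v)$ in the standard way, the cut-off function $\tilde v:=\eta v$ lies in $\mathcal{B}_\infty^{\beta_\star,q_\star}(\mathbb{R}^n)$ with norm controlled by $C$. Lemma \ref{embedding5} then places $\tilde v$ in $\mathcal{N}_\infty^{\beta_\star,q_\star}(\mathbb{R}^n)$, and Lemma \ref{Holderemb} gives $\tilde v\in C^\alpha(\mathbb{R}^n)$ with $[\tilde v]_{C^\alpha}\leq C$, which implies $[v]_{C^\alpha(B_{1/2})}\leq C$. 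Undoing the scaling and translation produces the estimate (\ref{C2sreg5}) with $C=C(n,s,\lambda,\alpha)$. The only substantive difficulty is the bookkeeping described above to keep $\beta_k<1$ throughout the iteration when $2s>1$; the rest is a finite, quantitative application of already-established ingredients.
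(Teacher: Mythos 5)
Your proposal is correct and follows essentially the same route as the paper: scale to the unit ball so that the $L^\infty$, $W^{s,2}$ and tail norms are all at most one, seed the iteration with the $W^{s,2}$ bound via Proposition \ref{diffsobolev} (obtaining $\beta_0=s$, $q_0=2$), iterate Proposition \ref{increment} with the same exponent recursion, cap the differentiability exponent below $1$ when $2s>1$ (the paper implements this by resetting $\vartheta_i$ to $\gamma-1/q_i$ so that the input exponent stays fixed at $\gamma=1-\varepsilon$ while only the integrability $q$ grows, which is the same bookkeeping as your $1-\eta$ cap), and conclude via a cutoff plus Lemmas \ref{embedding5} and \ref{Holderemb}. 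The only cosmetic differences are that the paper prescribes explicit radii $R_i$ and $h_0=1/(64 i_\infty)$ up front rather than "choosing $h_0$ small," and it splits the argument into the cases $s\le 1/2$ and $s>1/2$ rather than treating the cap uniformly.
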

\begin{proof}
If $u \equiv 0$ a.e., then the assertion is trivially satisfied. Otherwise, set
$$ M_{R,x_0}:= ||u||_{L^\infty(B_R(x_0))} + R^{s-\frac{n}{2}} [u]_{W^{s,2}(B_R(x_0))} + R^{2s} \int_{\mathbb{R}^n \setminus B_{R}(x_0)} \frac{|u(y)|}{|x_0-y|^{n+2s}}dy>0. $$ Consider the scaled function
$$ u_1(x):=\frac{1}{M_{R,x_0}}u(Rx+x_0) $$
and also 
$$A_1(x,y):= A(Rx+x_0,Ry+x_0), \quad \Phi_1(t):= \frac{1}{M_{R,x_0}} \Phi(M_{R,x_0} t).$$
Observe that $u_1$ belongs to $W^{s,2}(B_{1}) \cap L^1_{2s}(\mathbb{R}^n) \cap L^\infty(B_1)$ and is a weak solution of $L_{A_1}^{\Phi_1} u_1 = 0$ in $B_1$. Moreover, it is easy to verify that $A_1 \in \mathcal{L}_1(\lambda,B_1)$ and that $\Phi_1$ satisfies (\ref{PhiLipschitz}) and (\ref{PhiMonotone}) with respect to $\lambda$. Furthermore, by using changes of variables it is straightforward to verify that $u_1$ satisfies
\begin{equation} \label{smallu1}
||u_1||_{L^\infty(B_1)} \leq 1, \quad \int_{\mathbb{R}^n \setminus B_1} \frac{|u_1(y)|}{|y|^{n+2s}}dy \leq 1, \quad [u_1]_{W^{s,2}(B_1)} \leq 1.
\end{equation}
Therefore, the conclusion of Proposition \ref{increment} is valid with respect to $u_1$.
For $i \in \mathbb{N}_0$, we define the sequences $$q_i:=2+i, \quad \vartheta_i:= \frac{2si+2s-1}{2+i}. $$
In particular, we have 
\begin{equation} \label{Limit5}
\lim_{i \to \infty} q_i= \infty, \quad \lim_{i \to \infty} \vartheta_i = 2s.
\end{equation}
We split the further proof into two cases. \newline
\textbf{Case 1: $s \leq 1/2$.} Fix $0<\alpha<2s$. In view of (\ref{Limit5}), we can find some large enough $i_\infty \in \mathbb{N}$ such that 
\begin{equation} \label{applxy}
\alpha < \frac{1}{q_{i_\infty}} + \vartheta_{i_\infty} - \frac{n}{q_{i_\infty}}. 
\end{equation}
For $i=0,..., i_\infty$, define
$$ h_0:=\frac{1}{64 i_\infty}, \quad R_i:=\frac{7}{8} - 4(2i+1)h_0= \frac{7}{8} - \frac{2i+1}{16 i_\infty}. $$
We note that
\begin{equation} \label{relations5}
R_0+4h_0=\frac{7}{8}, \quad R_{i_\infty-1}-4h_0=\frac{3}{4}, \quad R_i-4h_0=R_{i+1}+4h_0 \text{ } (i=0,...,i_\infty-2).
\end{equation}
Since $s \leq 1/2$, for $i=0,...,i_\infty-1$ we have $0<(1+\vartheta_i q_i)/q_i<1$. Therefore, for $i=0,...,i_\infty-1$ we can apply Proposition \ref{increment} to 
$$ R=R_i, \quad \vartheta=\vartheta_i, \quad q=q_i,$$
so that along with (\ref{relations5}) and the observation that by construction
$$ \frac{1+2s+\vartheta_i q_i}{q_i+1} = \frac{1+\vartheta_{i+1} q_{i+1}}{q_{i+1}}, $$
we obtain the following estimates
$$ \sup_{0<|h|<h_0} \left | \left | \frac{\delta_h^2 u_1}{ |h|^\frac{1+\vartheta_{1} q_{1}}{q_{1}}} \right | \right |_{L^{q_1}(B_{R_1+4h_0})} \leq C_0 \left ( \sup_{0<|h|<h_0} \left | \left | \frac{\delta_h^2 u_1}{ |h|^s} \right | \right |_{L^{2}(B_{7/8})} + 1 \right ),$$
$$ \sup_{0<|h|<h_0} \left | \left | \frac{\delta_h^2 u_1}{ |h|^\frac{1+\vartheta_{i+1} q_{i+1}}{q_{i+1}}} \right | \right |_{L^{q_{i+1}}(B_{R_{i+1}+4h_0})} \leq C_0 \left ( \sup_{0<|h|<h_0} \left | \left | \frac{\delta_h^2 u_1}{ |h|^\frac{1+\vartheta_{i} q_{i}}{q_{i}}} \right | \right |_{L^{q_i}(B_{R_i+4h_0})} + 1 \right ), \text{ } i=1,...,i_\infty-2,$$
and
$$ \sup_{0<|h|<h_0} \left | \left | \frac{\delta_h^2 u_1}{ |h|^{\frac{1}{q_{i_\infty}}+\vartheta_{i_\infty}}} \right | \right |_{L^{q_{i_\infty}}(B_{3/4})} \leq C_0 \left ( \sup_{0<|h|<h_0} \left | \left | \frac{\delta_h^2 u_1}{ |h|^\frac{1+\vartheta_{i_\infty-1} q_{i_\infty-1}}{q_{i_\infty-1}}} \right | \right |_{L^{q_{i_\infty-1}}(B_{R_{i_\infty-1}+4h_0})} + 1 \right ),$$
where $C_0=C_0(n,s,\lambda,\alpha)$. Combining the above estimates leads to the estimate
\begin{equation} \label{impeq5}
 \sup_{0<|h|<h_0} \left | \left | \frac{\delta_h^2 u_1}{ |h|^{\frac{1}{q_{i_\infty}}+\vartheta_{i_\infty}}} \right | \right |_{L^{q_{i_\infty}}(B_{3/4})} \leq C_1 \left ( \sup_{0<|h|<h_0} \left | \left | \frac{\delta_h^2 u_1}{ |h|^s} \right | \right |_{L^{2}(B_{7/8})} + 1 \right ),
\end{equation}
where $C_1=C_1(n,s,\lambda,\alpha)>0$. By taking into account the relation 
$$\delta_h u_1 = \frac{1}{2} (\delta_{2h} u_1 - \delta_h^2 u_1)$$
and then using the second part of Proposition \ref{diffsobolev} and then (\ref{smallu1}), we deduce
\begin{equation} \label{propappl}
\begin{aligned}
\sup_{0<|h|<h_0} \left | \left | \frac{\delta_h^2 u_1}{ |h|^s} \right | \right |_{L^{2}(B_{7/8})} \leq & 2 \sup_{0<|h|<h_0} \left | \left | \frac{\delta_h u_1}{ |h|^s} \right | \right |_{L^{2}(B_{7/8})} \\
\leq & C_2 \left ( [u_1]_{W^{s,2}(B_{7/8+2h_0})} + [u_1]_{L^\infty(B_{7/8+2h_0})} \right ) \\
\leq & C_2 \left ( [u_1]_{W^{s,2}(B_{1})} + [u_1]_{L^\infty(B_{1})} \right ) \leq C_2(n,s,\alpha) .
\end{aligned}
\end{equation}
By combining (\ref{impeq5}) with (\ref{propappl}) and setting $$\beta:=\frac{1}{q_{i_\infty}}+\vartheta_{i_\infty} \in (0,1),$$ we arrive at
\begin{equation} \label{univbound5}
\sup_{0<|h|<h_0} \left | \left | \frac{\delta_h^2 u_1}{ |h|^\beta} \right | \right |_{L^{q_{i_\infty}}(B_{3/4})} \leq C_3(n,s,\lambda,\alpha).
\end{equation}
In order to proceed, we fix a cutoff function $\chi \in C_0^\infty(B_{5/8})$ with the properties
$$ 0 \leq \chi \leq 1, \quad \chi \equiv 1 \text{ in } B_{1/2}, \quad |\nabla \chi| \leq C_4, \quad |\nabla^2 \chi| \leq C_4,$$
where by $\nabla^2 \chi$ we denote the Hessian of $\chi$ and $C_4=C_4(n)>0$.
In particular, since $0<\beta<1$, for any $h \in \mathbb{R}^n$ with $|h|>0$ we have
$$ \frac{|\delta_h \chi|}{ |h|^\beta} \leq C_5, \quad \frac{|\delta_h^2 \chi|}{ |h|^\beta} \leq C_5,$$
where $C_5=C_5(n)>0$. Together with the identity
\begin{equation} \label{2ndproductrule}
\delta^2_h(u_1 \chi)) = \chi_{2h} \delta_h^2u_1 + 2 \delta_h u_1 \delta_h \chi_h + u_1 \delta_h^2 \chi,
\end{equation}
(\ref{univbound5}) and (\ref{smallu1}), for $0<|h|<h_0$ we obtain 
\begin{align*}
& \left | \left | \frac{\delta_h^2 (u_1 \chi)}{ |h|^\beta} \right | \right |_{L^{q_{i_\infty}}(\mathbb{R}^n)} \\
\leq & 2 \left ( \left | \left | \frac{\chi_{2h} \delta_h^2u_1}{ |h|^\beta} \right | \right |_{L^{q_{i_\infty}}(\mathbb{R}^n)} + \left | \left | \frac{\delta_h u_1 \delta_h \chi_h}{ |h|^\beta} \right | \right |_{L^{q_{i_\infty}}(\mathbb{R}^n)} + \left | \left | \frac{u_1 \delta_h^2 \chi}{ |h|^\beta} \right | \right |_{L^{q_{i_\infty}}(\mathbb{R}^n)} \right ) \\
\leq & 2 \left ( \left | \left | \frac{\delta_h^2u_1}{ |h|^\beta} \right | \right |_{L^{q_{i_\infty}}(B_{5/8+2h_0})} + \left | \left |\delta_h u_1 \right | \right |_{L^{q_{i_\infty}}(B_{5/8+2h_0})} + \left | \left | u_1 \right | \right |_{L^{q_{i_\infty}}(B_{5/8+2h_0})} \right ) \\
\leq & C_6 \left ( \left | \left | \frac{\delta_h^2u_1}{ |h|^\beta} \right | \right |_{L^{q_{i_\infty}}(B_{3/4})} + \left | \left | u_1 \right | \right |_{L^{\infty}(B_{3/4})} \right ) \leq C_7(n,s,\lambda,\alpha).
\end{align*}
Since moreover by (\ref{smallu1}), for $|h| \geq h_0$ we have
$$ \left | \left | \frac{\delta_h^2 (u_1 \chi)}{ |h|^\beta} \right | \right |_{L^{q_{i_\infty}}(\mathbb{R}^n)} \leq C_8 \left | \left | u_1 \right | \right |_{L^{\infty}(B_{3/4})} \leq C_8(n,s,\alpha).$$
by Lemma \ref{embedding5} it follows that
\begin{equation} \label{Jl}
[u_1 \chi]_{\mathcal{N}_\infty^{\beta,q_{i_\infty}}(\mathbb{R}^n)} \leq C_9 [u_1 \chi]_{\mathcal{B}_\infty^{\beta,q_{i_\infty}}(\mathbb{R}^n)} = C_9 \sup_{h>0} \left | \left | \frac{\delta_h^2 (u_1 \chi)}{ |h|^\beta} \right | \right |_{L^{q_{i_\infty}}(\mathbb{R}^n)} \leq C_{10}(n,s,\lambda,\alpha).
\end{equation}
Along with Lemma \ref{Holderemb} with our choice of $\beta$ and $q=q_{i_\infty}$ (which is applicable in view of (\ref{applxy})), we obtain 
\begin{equation} \label{J1}
\begin{aligned}
[u_1]_{C^\alpha(B_{1/2})} = [u_1 \chi]_{C^\alpha(B_{1/2})} \leq & C_{11} \left ([u_1 \chi]_{\mathcal{N}_\infty^{\beta,q_{i_\infty}}(\mathbb{R}^n)} \right)^\frac{\alpha q_{i_\infty} +n}{\beta q_{i_\infty}} \left (||u_1 \chi||_{L^{q_{i_\infty}}(\mathbb{R}^n)} \right)^{1-\frac{\alpha q_{i_\infty} +n}{\beta q_{i_\infty}}} \\
\leq & C_{12} \left (||u_1||_{L^{\infty}(B_{5/8})} \right)^{1-\frac{\alpha q_{i_\infty} +n}{\beta q_{i_\infty}}} \leq C(n,s,\lambda,\alpha).
\end{aligned}
\end{equation}
Finally, rescaling yields the desired estimate, namely (\ref{C2sreg5}). This finishes the proof in the case when $s \leq 1/2$. \newline
\textbf{Case 2: $s > 1/2$.} Fix $0<\alpha<1$.
Since in view of (\ref{Limit5}) we have $$\lim_{i \to \infty} \frac{1+\vartheta_{i} q_{i}}{q_{i}}=2s>1$$
and the expression $\frac{1+\vartheta_{i} q_{i}}{q_{i}}$ is increasing in $i$, there exists some $i_\infty \in \mathbb{N}$ such that
\begin{equation} \label{choice4}
\frac{1+\vartheta_{i} q_{i}}{q_{i}}<1 \text{ for any } i=0,...,i_{\infty}-1 \text{ and } \frac{1+\vartheta_{i_{\infty}} q_{i_{\infty}}}{q_{i_{\infty}}}\geq 1.
\end{equation}
Next, we choose $j_\infty \in \mathbb{N}$ large enough such that
$$
\alpha<1-\frac{n}{i_\infty+j_\infty}.
$$
Moreover, we choose some $\varepsilon \in (0,1)$ such that
\begin{equation} \label{choice7}
\alpha < 1-\varepsilon-\frac{n}{i_\infty+j_\infty}
\end{equation}
and let $\gamma:=1-\varepsilon$. Furthermore, similar to the previous case, for $i=0,..., i_\infty+j_\infty$ we define
$$ h_0:=\frac{1}{64 (i_\infty + j_\infty)}, \quad R_i:=\frac{7}{8} - 4(2i+1)h_0= \frac{7}{8} - \frac{2i+1}{16 (i_\infty + j_\infty)} $$
and note that
\begin{equation} \label{relations52}
R_0+4h_0=\frac{7}{8}, \quad R_{i_\infty+ j_\infty-1}-4h_0=\frac{3}{4}, \quad R_i-4h_0=R_{i+1}+4h_0 \text{ } (i=0,...,i_\infty+ j_\infty-2).
\end{equation}
In view of (\ref{choice4}), for $i=0,...,i_\infty-1$ we can apply Proposition \ref{increment} to 
$$ R=R_i, \quad \vartheta=\vartheta_i, \quad q=q_i,$$
which in almost exactly the same way as in Case 1 (cf. (\ref{univbound5})) leads to the estimate
\begin{equation} \label{univbound52}
\sup_{0<|h|<h_0} \left | \left | \frac{\delta_h^2 u_1}{ |h|^\gamma} \right | \right |_{L^{q_{i_\infty}}(B_{R_{i_\infty}+4h_0})} \leq \sup_{0<|h|<h_0} \left | \left | \frac{\delta_h^2 u_1}{ |h|^{\beta}} \right | \right |_{L^{q_{i_\infty}}(B_{R_{i_\infty}+4h_0})} \leq C_{13}(n,s,\lambda,\alpha),
\end{equation}
where we used that by (\ref{choice4}) we have $\gamma < 1 \leq \beta = \frac{1}{q_{i_\infty}}+ \vartheta_{i_\infty}$. 
Next, we set $\widetilde \vartheta_i:=\gamma-\frac{1}{q_i}$ and observe that $$\frac{1+\widetilde \vartheta_{i} q_{i}}{q_{i}}=\gamma \in (0,1).$$
Therefore, for $i=i_\infty,...,i_\infty + j_\infty -1$ we can apply Proposition \ref{increment} to 
$$ R=R_i, \quad \vartheta= \widetilde \vartheta_i, \quad q=q_i,$$
so that along with (\ref{relations52}) and the observation that $s > 1/2$ implies
$$ \frac{1+2s+ \widetilde \vartheta_i q_i}{q_i+1} > \frac{2+\widetilde \vartheta_{i} q_{i}}{q_{i}+1} = 1+ \frac{q_i(\gamma-1)}{q_i+1}>\gamma, $$
we obtain the estimates 
$$ \sup_{0<|h|<h_0} \left | \left | \frac{\delta_h^2 u_1}{ |h|^\gamma} \right | \right |_{L^{q_{i+1}}(B_{R_{i+1}+4h_0})} \leq C_{14} \left ( \sup_{0<|h|<h_0} \left | \left | \frac{\delta_h^2 u_1}{ |h|^\gamma} \right | \right |_{L^{q_i}(B_{R_i+4h_0})} + 1 \right ), \text{ } i=i_\infty,...,i_\infty+j_\infty-1,$$
and
$$ \sup_{0<|h|<h_0} \left | \left | \frac{\delta_h^2 u_1}{ |h|^\gamma} \right | \right |_{L^{q_{i_\infty+j_\infty}}(B_{3/4})} \leq C_{14} \left ( \sup_{0<|h|<h_0} \left | \left | \frac{\delta_h^2 u_1}{ |h|^\gamma} \right | \right |_{L^{q_{i_\infty+j_\infty-1}}(B_{R_{i_\infty+j_\infty-1}+4h_0})} + 1 \right ),$$
where $C_{14}=C_{14}(n,s,\lambda,\alpha)$. Combining these estimates with (\ref{univbound52}) and recalling that $\gamma=1-\varepsilon$,
we arrive at
$$ \sup_{0<|h|<h_0} \left | \left | \frac{\delta_h^2 u_1}{ |h|^{1-\varepsilon}} \right | \right |_{L^{q_{i_\infty+j_\infty}}(B_{3/4})} \leq C_{15}(n,s,\lambda,\alpha).$$
By imitating the arguments used to conclude in case 1 (cf. (\ref{Jl}) and (\ref{J1})), which in particular involves applying Lemma \ref{Holderemb} with $\beta=1-\varepsilon$ and $q=q_{i_\infty+j_\infty}$ (which is applicable in view of (\ref{choice7})), we conclude that
$$ [u_1]_{C^\alpha(B_{1/2})} \leq C=C(n,s,\lambda,\alpha)$$
for a different constant $C$ as the one in (\ref{J1}). The desired estimate (\ref{C2sreg5}) now once again simply follows by rescaling, which finishes the proof.
\end{proof}

\section{Higher H\"older regularity by approximation}
We now use an approximation argument inspired by \cite[section 6]{BLS} and \cite{CSa} in order to prove Theorem \ref{C2sreg} and Theorem \ref{C2srega} under full generality. In order to do so, we need the following definition.
\begin{defin}
	Let $0<r<R$ and let $u \in W^{s,2}(B_R) \cap L^1_{2s}(\mathbb{R}^n)$. We say that $v \in W^{s,2}(B_R) \cap L^1_{2s}(\mathbb{R}^n)$ is a weak solution of the problem $$ \begin{cases} \normalfont
		L_{A}^\Phi v = 0 & \text{ in } B_r \\
		v = u & \text{ a.e. in } \mathbb{R}^n \setminus B_r,
	\end{cases} $$ if we have
	$\mathcal{E}_A^\Phi(u,\varphi) = 0$ for any $\varphi \in W_0^{s,2}(B_r)$
	and $v = u \text{ a.e. in } \mathbb{R}^n \setminus B_r$.
\end{defin}

\begin{lem} \label{approxLinf}
	Let $s \in (0,1)$, $\lambda \geq 1$, $q>\frac{n}{2s}$ and $M \geq 1$. Then for any $\tau>0$, there exists some small enough $\delta=\delta(\tau,n,s,\lambda,q,M)>0$ such that the following is true. Assume that $\Phi$ satisfies (\ref{PhiLipschitz}) and (\ref{PhiMonotone}) with respect to $\lambda$, that $A \in \mathcal{L}_0(\lambda)$ and that we have $f \in L^q(B_1)$. Moreover, suppose that $\widetilde A$ is another kernel coefficient of class  $\mathcal{L}_0(\lambda)$ such that
	\begin{equation} \label{boundsxz}
	||A-\widetilde A||_{L^\infty(\mathbb{R}^n \times \mathbb{R}^n)} \leq \delta, \quad ||f||_{L^q(B_1)} \leq \delta,
	\end{equation}
	and let $u \in W^{s,2}(B_1) \cap L^1_{2s}(\mathbb{R}^n)$ be a local weak solution of
	\begin{equation} \label{eq87}
	L_A^\Phi u=f \text{ in } B_1
	\end{equation}
	that satisfies
	\begin{equation} \label{boundsyz}
	\sup_{x \in B_1} |u(x)| + \int_{\mathbb{R}^n \setminus B_1} \frac{|u(y)|}{|y|^{n+2s}}dy \leq M.
	\end{equation}
	Then the unique weak solution $v \in W^{s,2}(B_1) \cap L^1_{2s}(\mathbb{R}^n)$ of the problem
	\begin{equation} \label{constcof3}
	\begin{cases} \normalfont
	L_{\widetilde A}^\Phi v = 0 & \text{ in } B_{7/8} \\
	v = u & \text{ a.e. in } \mathbb{R}^n \setminus B_{7/8}
	\end{cases}
	\end{equation}
	satisfies 
	\begin{equation} \label{appy}
	||u-v||_{L^\infty(B_{3/4})} \leq \tau.
	\end{equation}
\end{lem}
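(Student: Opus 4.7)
The approach is by compactness/contradiction, in the spirit of \cite{CSa}. Suppose the conclusion fails. Then there exist $\tau_0>0$ and sequences $\delta_k\downarrow 0$, functions $\Phi_k$ satisfying (\ref{PhiLipschitz})--(\ref{PhiMonotone}) with constant $\lambda$, kernel coefficients $A_k,\widetilde A_k\in\mathcal{L}_0(\lambda)$ with $\|A_k-\widetilde A_k\|_{L^\infty(\mathbb{R}^n\times\mathbb{R}^n)}\le\delta_k$, right-hand sides $f_k$ with $\|f_k\|_{L^q(B_1)}\le\delta_k$, local weak solutions $u_k$ of $L_{A_k}^{\Phi_k}u_k=f_k$ in $B_1$ satisfying (\ref{boundsyz}), and associated $v_k$ solving the analogue of (\ref{constcof3}), such that $\|u_k-v_k\|_{L^\infty(B_{3/4})}\ge\tau_0$ for every $k$. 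The goal will be to extract a subsequence along which $u_k$ and $v_k$ converge to a common limit in $L^\infty(B_{3/4})$, yielding the desired contradiction.

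The first step is to establish uniform regularity. Since $\|u_k\|_{L^\infty(B_1)}\le M$ and the tail of $u_k$ is controlled by $M$, Theorem \ref{Cacc} yields a uniform bound on $[u_k]_{W^{s,2}(B_r)}$ for any $r<1$, and Theorem \ref{finnish1} then provides a uniform bound on $[u_k]_{C^\beta(\overline B_r)}$ for some $\beta=\beta(n,s,\lambda,q)\in(0,1)$ and any $r<1$. For the comparison solutions $v_k$, use that $v_k=u_k$ outside $B_{7/8}$, so the tail of $v_k$ is also controlled by $M$; testing the equation for $v_k$ with $v_k-u_k\in W^{s,2}_0(B_{7/8})$ and exploiting (\ref{PhiMonotone}) together with (\ref{PhiLipschitz}) will give a uniform $W^{s,2}(B_{7/8})$ energy bound, after which Theorems \ref{finnish} and \ref{finnish1} produce uniform $L^\infty$ and $C^\beta$ bounds on $\overline B_r$ for any $r<7/8$.

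Next I extract convergent subsequences. The functions $\Phi_k$ are uniformly Lipschitz with $\Phi_k(0)=0$, so Arzel\`a--Ascoli yields locally uniform convergence to a limit $\Phi_\infty$ still satisfying (\ref{PhiLipschitz})--(\ref{PhiMonotone}); the $A_k$ and $\widetilde A_k$ admit a common weak-$\ast$ limit $A_\infty\in\mathcal{L}_0(\lambda)$ since $\|A_k-\widetilde A_k\|_\infty\to 0$; from the uniform $C^\beta$ estimates, Arzel\`a--Ascoli yields $u_k\to u_\infty$ and $v_k\to v_\infty$ locally uniformly on $B_1$ and $B_{7/8}$, respectively; and with the help of the uniform tail bound together with a diagonal argument one further arranges $u_k\to u_\infty$ a.e.\ on $\mathbb{R}^n$, so that setting $v_\infty:=u_\infty$ on $\mathbb{R}^n\setminus B_{7/8}$ makes $v_\infty$ coincide with $u_\infty$ outside $B_{7/8}$.

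The main obstacle will be passing to the limit in the weak formulations to obtain $L_{A_\infty}^{\Phi_\infty}u_\infty=0$ and $L_{A_\infty}^{\Phi_\infty}v_\infty=0$ in $B_{7/8}$. For a fixed $\varphi\in W_c^{s,2}(B_{7/8})$, I will split the integrals into a local part over $B_R\times B_R$ for some $R<1$ with $\mathrm{supp}\,\varphi\Subset B_R$ and a tail part. On the local part, the uniform convergence $u_k\to u_\infty$ together with the locally uniform convergence $\Phi_k\to\Phi_\infty$ and the uniform Lipschitz bound on $\Phi_k$ give strong $L^2$-convergence of $\Phi_k(u_k(x)-u_k(y))(\varphi(x)-\varphi(y))|x-y|^{-(n+2s)/2}$, which pairs against the weak-$\ast$ convergence of $A_k$ to identify the limit; the estimate $\|A_k-\widetilde A_k\|_\infty\le\delta_k$ then matches the limits of both sequences. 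The tail part is controlled uniformly by (\ref{PhiLipschitz}), the tail bound on $u_k$ (inherited by $v_k$), and the support of $\varphi$. Since $f_k\to 0$ in $L^q(B_1)$, the right-hand sides vanish in the limit. Once both limit equations are obtained, uniqueness of the Dirichlet problem for $L_{A_\infty}^{\Phi_\infty}$ in $B_{7/8}$ with the common exterior datum $u_\infty|_{\mathbb{R}^n\setminus B_{7/8}}$ follows by testing the difference of the two equations with $u_\infty-v_\infty\in W^{s,2}_0(B_{7/8})$ and using (\ref{PhiMonotone}) together with the Friedrichs inequality (Lemma \ref{Friedrichs}). Hence $u_\infty=v_\infty$ in $B_{7/8}$, which together with the locally uniform convergence contradicts $\|u_k-v_k\|_{L^\infty(B_{3/4})}\ge\tau_0$.
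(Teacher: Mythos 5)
Your plan follows the same broad contradiction/compactness strategy as the paper (and \cite{CSa}), and the uniform regularity estimates you invoke are the right ones, but the middle of your argument is both more complicated than necessary and contains a genuine gap.

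The paper does \emph{not} pass to the limit in the weak formulations at all. Instead it tests both equations with $w_m := u_m - v_m \in W^{s,2}_0(B_{7/8})$ and exploits the monotonicity (\ref{PhiMonotone}), the quantitative smallness $\|A_m - \widetilde A_m\|_\infty \le 1/m$, and $\|f_m\|_{L^q} \le 1/m$ to show directly that
$[w_m]_{W^{s,2}(\mathbb{R}^n)} \le C/m \to 0$, hence $\|w_m\|_{L^2(B_{7/8})} \to 0$ via Friedrichs–Poincar\'e. Combined with the uniform $C^\beta$ bounds you already identified and Arzel\`a–Ascoli, this yields $\|w_m\|_{L^\infty(B_{3/4})} \to 0$ and the contradiction immediately. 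You also write down the energy estimate (``testing the equation for $v_k$ with $v_k - u_k$ ... will give a uniform $W^{s,2}$ energy bound''), but you only extract boundedness from it, missing that the same estimate actually shows the energy of $u_k - v_k$ tends to zero. That is the key observation that short-circuits your longer route.

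More seriously, the route you do pursue has a gap in the passage to the limit in the weak formulations. To identify the limit equation for $u_\infty$ (and the exterior datum for $v_\infty$), you must treat the nonlocal tail contribution, where $\Phi_k(u_k(x) - u_k(y))$ involves $u_k(y)$ for $y$ far from $B_1$. The only information on $u_k$ outside $B_1$ is the uniform bound $\int_{\mathbb{R}^n \setminus B_1} |u_k(y)|/|y|^{n+2s}\,dy \le M$; this gives boundedness in a weighted $L^1$ space but neither pointwise, a.e., nor strong weighted-$L^1$ convergence along a subsequence. The sentence ``with the help of the uniform tail bound together with a diagonal argument one further arranges $u_k \to u_\infty$ a.e.\ on $\mathbb{R}^n$'' is therefore not justified: boundedness in $L^1_{2s}$ does not yield a.e.\ convergence, and a weak-$\ast$ limit in the sense of measures would not suffice to make sense of $\Phi_\infty(u_\infty(x) - u_\infty(y))$ or of the Dirichlet datum in (\ref{constcof3}) for $v_\infty$. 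Concretely, after reducing via the Lipschitz property of $\Phi_k$, passing to the limit in the tail requires $\int_{\mathbb{R}^n \setminus B_R} |u_k(y) - u_\infty(y)|/|y|^{n+2s}\,dy \to 0$, which does not follow from your hypotheses. The paper's direct energy estimate avoids this entirely, which is precisely why it is the shorter and more robust argument.
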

\begin{proof}
First of all, we remark that the existence of a unique weak solution of the problem (\ref{constcof3}) belonging to $W^{s,2}(B_1) \cap L^1_{2s}(\mathbb{R}^n)$ can be shown almost exactly as in \cite[Theroem 1 and Remark 3]{existence} by using the theory of monotone operators and additionally taking into account the bounds (\ref{PhiLipschitz}) and (\ref{PhiMonotone}) imposed on $\Phi$. \newline
We now prove by contradiction. Assume that the conclusion is not true. Then there exist some $\tau>0$, sequences of kernel coefficients $\{A_m\}_{m=1}^\infty$ and $\{\widetilde A_m\}_{m=1}^\infty$ of class $\mathcal{L}_0(\lambda)$, a sequence of functions $\{\Phi_m\}_{m=1}^\infty$ satisfying (\ref{PhiLipschitz}) and (\ref{PhiMonotone}), and sequences $\{u_m\}_{k=1}^\infty \subset W^{s,2}(B_1) \cap L^1_{2s}(\mathbb{R}^n)$, $\{f_m\}_{m=1}^\infty \subset L^q(B_1)$, such that for any $m$ the function $u_m$ is a local weak solution of the problem
\begin{equation} \label{eq87c}
L_{A_m}^{\Phi_m} u_m =f_m \text{ in } B_1,
\end{equation}
\begin{equation} \label{boundsyzc}
\sup_{x \in B_1} |u_m(x)| + \int_{\mathbb{R}^n \setminus B_1} \frac{|u_m(y)|}{|y|^{n+2s}}dy \leq M,
\end{equation}
\begin{equation} \label{boundsxzc}
||A_m-\widetilde A_m||_{L^\infty(\mathbb{R}^n \times \mathbb{R}^n)} \leq \frac{1}{m}, \quad ||f_m||_{L^q(B_1)} \leq \frac{1}{m},
\end{equation}
but for any $m$ the unique weak solution $v_m \in W^{s,2}(B_1) \cap L^1_{2s}(\mathbb{R}^n)$ of
\begin{equation} \label{constcof3c}
\begin{cases} \normalfont
L_{\widetilde A_m}^{\Phi_m} v_m = 0 & \text{ in } B_{7/8} \\
v_m = u_m & \text{ a.e. in } \mathbb{R}^n \setminus B_{7/8}
\end{cases}
\end{equation}
satisfies 
\begin{equation} \label{appyc}
||u_m-v_m||_{L^\infty(B_{3/4})} > \tau.
\end{equation}
In view of (\ref{eq1}), (\ref{PhiMonotone}) and using $w_m:=u_m-v_m \in W^{s,2}_0(B_{7/8})$ as a test function in (\ref{constcof3c}) and also in (\ref{eq87c}), we obtain
\begin{align*}
& \int_{\mathbb{R}^n} \int_{\mathbb{R}^n} \frac{(w_m(x)-w_m(y))^2}{|x-y|^{n+2s}}dydx \\
\leq & \lambda \int_{\mathbb{R}^n} \int_{\mathbb{R}^n} \widetilde A_m(x,y) \frac{((u_m(x)-u_m(y))-(v_m(x)-v_m(y)))^2}{|x-y|^{n+2s}}dydx \\
\leq & \lambda^2 \bigg ( \int_{\mathbb{R}^n} \int_{\mathbb{R}^n} \widetilde A_m (x,y) \frac{\Phi_m(u_m(x)-u_m(y))(w_m(x)-w_m(y))}{|x-y|^{n+2s}}dydx \\
& - \underbrace{\int_{\mathbb{R}^n} \int_{\mathbb{R}^n} \widetilde A_m(x,y) \frac{ \Phi_m(v_m(x)-v_m(y))(w_m(x)-w_m(y))}{|x-y|^{n+2s}}dydx}_{=0} \bigg ) \\
= & \lambda^2 \bigg (\int_{\mathbb{R}^n} \int_{\mathbb{R}^n} (\widetilde A_m(x,y)-A_m(x,y)) \frac{\Phi_m(u_m(x)-u_m(y))(w_m(x)-w_m(y))}{|x-y|^{n+2s}}dydx \\
& + \int_{\mathbb{R}^n} \int_{\mathbb{R}^n} A_m(x,y) \frac{\Phi_m(u_m(x)-u_m(y))(w_m(x)-w_m(y))}{|x-y|^{n+2s}}dydx \bigg )\\
= & \underbrace{\lambda^2 \int_{\mathbb{R}^n} \int_{\mathbb{R}^n} (\widetilde A_m(x,y)-A_m(x,y)) \frac{\Phi_m(u_m(x)-u_m(y))(w_m(x)-w_m(y))}{|x-y|^{n+2s}}dydx}_{=: I_1} \\
& + \underbrace{\lambda^2 \int_{B_1} f_m(x)w_m(x)dx}_{:=I_2}.
\end{align*}
By using (\ref{PhiLipschitz}) and (\ref{boundsxzc}), we further estimate $I_1$ as follows
\begin{align*}
I_1 \leq & \lambda^3 \int_{\mathbb{R}^n} \int_{\mathbb{R}^n} |\widetilde A_m(x,y)-A_m(x,y)| \frac{|u_m(x)-u_m(y)||w_m(x)-w_m(y)|}{|x-y|^{n+2s}}dydx \\
\leq & \lambda^3 ||A_m-\widetilde A_m||_{L^\infty(\mathbb{R}^n \times \mathbb{R}^n)} \int_{\mathbb{R}^n} \int_{\mathbb{R}^n} \frac{|u_m(x)-u_m(y)||w_m(x)-w_m(y)|}{|x-y|^{n+2s}}dydx \\
\leq & \lambda^3 \frac{1}{m} \underbrace{ \int_{B_{15/16}} \int_{B_{15/16}} \frac{|u_m(x)-u_m(y)||w_m(x)-w_m(y)|}{|x-y|^{n+2s}}dydx}_{=: I_{1,1}} \\
& + 2 \lambda^3 \frac{1}{m} \underbrace{ \int_{B_{7/8}} \int_{\mathbb{R}^n \setminus B_{15/16}} \frac{|u_m(x)||w_m(x)|}{|x-y|^{n+2s}}dydx}_{=: I_{1,2}} \\
& + 2 \lambda^3 \frac{1}{m} \underbrace{ \int_{B_{7/8}} \int_{\mathbb{R}^n \setminus B_{15/16}} \frac{|u_m(y)||w_m(x)|}{|x-y|^{n+2s}}dydx}_{=: I_{1,3}}.
\end{align*}
In order to proceed, we observe that since $n>2s$, we have $q > \frac{n}{2s} > \frac{2n}{n+2s}$, so that H\"older's inequality and (\ref{boundsxzc}) yield
\begin{equation} \label{fest}
\left ( \int_{B_1} |f_m(x)|^\frac{2n}{n+2s}dx \right )^\frac{n+2s}{2n} \leq C_1||f_m||_{L^q(B_1)} \leq \frac{C_1}{m},
\end{equation}
where $C_1=C_1(n,s,q)>0$.
By using the Cauchy-Schwarz inequality, Theorem \ref{Cacc}, (\ref{boundsyzc}) and (\ref{fest}), for $I_{1,1}$ we obtain
\begin{align*}
I_{1,1} \leq & \left (\int_{B_{15/16}} \int_{B_{15/16}} \frac{(u_m(x)-u_m(y))^2}{|x-y|^{n+2s}}dydx \right )^\frac{1}{2} \left (\int_{\mathbb{R}^n} \int_{\mathbb{R}^n} \frac{(w_m(x)-w_m(y))^2}{|x-y|^{n+2s}}dydx \right )^\frac{1}{2} \\
\leq & C_2 \left (||u_m||_{L^2(B_1)}^2 + ||u_m||_{L^1(B_1)}\int_{\mathbb{R}^n \setminus B_1} \frac{|u_m(y)|}{|y|^{n+2s}}dy + \left (\int_{B_1} |f_m(x)|^\frac{2n}{n+2s}dx \right )^\frac{n+2s}{n} \right )^\frac{1}{2} \\
& \times \left (\int_{\mathbb{R}^n} \int_{\mathbb{R}^n} \frac{(w_m(x)-w_m(y))^2}{|x-y|^{n+2s}}dydx \right )^\frac{1}{2} \\
\leq & C_3 \left (\int_{\mathbb{R}^n} \int_{\mathbb{R}^n} \frac{(w_m(x)-w_m(y))^2}{|x-y|^{n+2s}}dydx \right )^\frac{1}{2},
\end{align*}
where $C_2$ and $C_3$ depend only on $n,s,\lambda,q$ and $M$. For $I_{1,2}$, by using Lemma \ref{tailestz}, the Cauchy-Schwarz-inequality, the fractional Friedrichs-Poincar\'e inequality (Lemma \ref{Friedrichs}) and (\ref{boundsyzc}), we have
\begin{align*}
I_{1,2} & \leq C_4 \int_{B_{7/8}} \int_{\mathbb{R}^n \setminus B_{15/16}} \frac{|u_m(x)||w_m(x)|}{|y|^{n+2s}}dydx \\
& = C_5 \int_{B_{7/8}} |u_m(x)||w_m(x)|dx \\
& \leq C_5 ||w_m||_{L^2(B_{7/8})} ||u_m||_{L^2(B_{7/8})}
\leq C_6 \left (\int_{\mathbb{R}^n} \int_{\mathbb{R}^n} \frac{(w_m(x)-w_m(y))^2}{|x-y|^{n+2s}}dydx \right )^\frac{1}{2},
\end{align*}
where $C_4=15^{n+2s}$, $C_5=C_5(n,s)>0$ and $C_6=C_6(n,s,M)>0$. Similarly, by using Lemma \ref{tailestz}, the Cauchy-Schwarz-inequality, Lemma \ref{tail}, Lemma \ref{Friedrichs} and (\ref{boundsyzc}), 
for $I_{1,3}$ we obtain
\begin{align*}
I_{1,3} & \leq C_4 \int_{B_{7/8}} \int_{\mathbb{R}^n \setminus B_{15/16}} \frac{|u_m(y)||w_m(x)|}{|y|^{n+2s}}dydx \\
& \leq C_7 ||w_m||_{L^2(B_{7/8})} \int_{\mathbb{R}^n \setminus B_{15/16}} \frac{|u_m(y)|}{|y|^{n+2s}}dy \\
& \leq C_8 \left (\int_{\mathbb{R}^n} \int_{\mathbb{R}^n} \frac{(w_m(x)-w_m(y))^2}{|x-y|^{n+2s}}dydx \right )^\frac{1}{2} \left ( ||u_m||_{L^1(B_{1})}+\int_{\mathbb{R}^n \setminus B_{1}} \frac{|u_m(y)|}{|y|^{n+2s}}dy \right ) \\
& \leq C_9 \left (\int_{\mathbb{R}^n} \int_{\mathbb{R}^n} \frac{(w_m(x)-w_m(y))^2}{|x-y|^{n+2s}}dydx \right )^\frac{1}{2},
\end{align*}
where again all the constants depend only on $n,s$ and $M$.
Next, by using H\"older's inequality, the fractional Sobolev inequality (cf. \cite[Theorem 6.5]{Hitch}) and (\ref{fest}), we estimate $I_2$ in the following way
\begin{align*}
I_2 \leq & \left ( \int_{B_1} |f_m(x)|^\frac{2n}{n+2s}dx \right )^\frac{n+2s}{2n} \left (\int_{B_{1}} |w_m(x)|^\frac{2n}{n-2s} dx \right )^\frac{n-2s}{2n} \\
\leq & C_{10} \frac{1}{m} \left (\int_{\mathbb{R}^n} \int_{\mathbb{R}^n} \frac{(w_m(x)-w_m(y))^2}{|x-y|^{n+2s}}dydx \right )^\frac{1}{2},
\end{align*}
where $C_{10}=C_{10}(n,s,q)>0$. Putting the above estimates together, we arrive at
$$ \left (\int_{\mathbb{R}^n} \int_{\mathbb{R}^n} \frac{(w_m(x)-w_m(y))^2}{|x-y|^{n+2s}}dydx \right )^\frac{1}{2} \leq \frac{C_{11}}{m} $$
for some $C_{11}=C_{11}(n,s,\lambda,q,M)>0$.
Combining this estimate with the fractional Friedrichs-Poincar\'e inequality (Lemma \ref{Friedrichs}) leads to
\begin{equation} \label{L2estx}
||w_m||_{L^2(B_{7/8})} \leq C_{12} \left (\int_{\mathbb{R}^n} \int_{\mathbb{R}^n} \frac{(w_m(x)-w_m(y))^2}{|x-y|^{n+2s}}dydx \right )^\frac{1}{2} \leq C_{12}\frac{C_{11}}{m} \xrightarrow{m \to \infty} 0.
\end{equation}
In other words, we have
\begin{equation} \label{L2lim}
\lim_{m \to \infty} ||u_m-v_m||_{L^2(B_{7/8})} =0.
\end{equation}
In view of Theorem \ref{finnish}, Theorem \ref{finnish1}, the fact that $u_m=v_m$ a.e. in $\mathbb{R}^n \setminus B_{7/8}$ and Lemma \ref{tail}, we have
\begin{align*}
\sup_{x \in \overline B_{3/4}}|v_m(x)| + [v_m]_{C^{\beta}(B_{3/4})} \leq & C_{13} \left (||v_m||_{L^2(B_{7/8})}+ \int_{\mathbb{R}^n \setminus B_{7/8}} \frac{|v_m(y)|}{|y|^{n+2s}}dy \right ) \\
\leq & C_{13} \left (||w_m||_{L^2(B_{7/8})}+ ||u_m||_{L^2(B_{7/8})} + \int_{\mathbb{R}^n \setminus B_{7/8}} \frac{|u_m(y)|}{|y|^{n+2s}}dy \right ) \\
\leq & C_{14} \left (||w_m||_{L^2(B_{7/8})}+ ||u_m||_{L^\infty(B_{1})} + \int_{\mathbb{R}^n \setminus B_{1}} \frac{|u_m(y)|}{|y|^{n+2s}}dy \right ),
\end{align*}
so that in view of (\ref{L2estx}) and (\ref{boundsyzc}) the sequence $\{v_m\}_{m=1}^\infty$ is uniformly bounded in $\overline B_{3/4}$ and has uniformly bounded $C^\beta$ seminorms in $B_{3/4}$, where $\beta=\beta(n,s,\lambda,q)>0$.
Moreover, in view of (\ref{boundsyzc}) and Theorem \ref{finnish1}, the sequence $\{u_m\}_{m=1}^\infty$ is also uniformly bounded in $\overline B_{3/4}$ and has uniformly bounded $C^\beta$ seminorms in $B_{3/4}$.
In particular, the same is also true for the sequence $\{u_m-v_m\}_{m=1}^\infty$. Therefore, by the Arzel\`a-Ascoli theorem, by passing to a subsequence if necessary, we obtain that the sequence $\{u_m-v_m\}_{m=1}^\infty$ converges uniformly in $\overline B_{3/4}$ to some function $h$. Since by (\ref{L2lim}) up to passing to another subsequence we have
$$ u_m-v_m \xrightarrow{m \to \infty} 0 \quad \text{a.e. in } B_{7/8},$$
which by uniqueness of the limit implies that $h=0$ a.e. in $B_{3/4}$, we arrive at
\begin{equation} \label{L2lim1}
\lim_{m \to \infty} ||u_m-v_m||_{L^\infty(B_{3/4})} =0.
\end{equation}
In particular, for $m$ large enough we have 
$$ ||u_m-v_m||_{L^\infty(B_{3/4})} \leq \tau,$$
which contradicts (\ref{appyc}). This finishes the proof.
\end{proof}
Next, we use the above Lemma and Theorem \ref{modC2sreg} in order to prove the desired higher H\"older regularity in the case when $A$ is close enough to a locally translation invariant kernel coefficient.
\begin{prop} \label{nonhomreg1}
	Let  $s \in (0,1)$, $\lambda \geq 1$, $q> \frac{n}{2s}$ and let $\Theta=\min \left \{ 2s-\frac{n}{q},1 \right \}$. Then for any $0<\varepsilon < \Theta$, there exists some small enough $\delta=\delta(\varepsilon,n,s,\lambda,q)>0$ such that the following is true. Assume that $\Phi$ satisfies (\ref{PhiLipschitz}) and (\ref{PhiMonotone}) with respect to $\lambda$, that $A \in \mathcal{L}_0(\lambda)$ and that we have $f \in L^q(B_1)$. Moreover, suppose that there exists a kernel coefficient $\widetilde A \in \mathcal{L}_1(B_1,\lambda)$ such that
	\begin{equation} \label{boundsxzs}
	||A-\widetilde A||_{L^\infty(\mathbb{R}^n \times \mathbb{R}^n)} \leq \delta, \quad ||f||_{L^q(B_1)} \leq \delta.
	\end{equation}
	Then for any local weak solution $u \in W^{s,2}(B_1) \cap L^1_{2s}(\mathbb{R}^n)$ of
	\begin{equation} \label{eq87s}
	L_A^\Phi u=f \text{ in } B_1
	\end{equation}
	that satisfies
	\begin{equation} \label{boundsyzs}
	\sup_{x \in B_1} |u(x)| \leq 1, \quad \int_{\mathbb{R}^n \setminus B_1} \frac{|u(y)|}{|y|^{n+2s}}dy \leq 1,
	\end{equation}
we have $u \in C^{\Theta-\varepsilon}(\overline B_{1/2})$ and
$$[u]_{C^{\alpha}(B_{1/2})} \leq C(n,s,\lambda,q).$$
\end{prop}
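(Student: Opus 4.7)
The plan is to combine the approximation Lemma~\ref{approxLinf} with Theorem~\ref{modC2sreg} into a single-scale oscillation decay, then iterate dyadically in the spirit of the classical Caffarelli--Silvestre compactness/iteration paradigm. Fix an intermediate exponent $\bar\alpha$ with $\Theta-\varepsilon < \bar\alpha < \min\{2s,1\}$ and a small radius $\rho \in (0,1/2)$ to be adjusted later. Lemma~\ref{approxLinf} produces a $v$ solving $L_{\widetilde A}^\Phi v = 0$ in $B_{7/8}$, equal to $u$ outside, with $\|u-v\|_{L^\infty(B_{3/4})} \leq \tau$; applying Theorem~\ref{modC2sreg} centered at every $z \in B_{1/2}$ with radius $1/4$ (so $B_{1/4}(z) \subset B_{3/4} \subset B_{7/8}$) and joining the local estimates by a standard covering yields $[v]_{C^{\bar\alpha}(B_{1/2})} \leq C_0$ with $C_0 = C_0(n,s,\lambda,\bar\alpha)$. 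Choosing $\rho$ so that $C_0 \rho^{\bar\alpha} \leq \tfrac12 \rho^{\Theta-\varepsilon}$ and then $\delta$ so that the approximation error satisfies $\tau \leq \tfrac12 \rho^{\Theta-\varepsilon}$, the triangle inequality gives the single-scale decay $\sup_{B_\rho}|u - c| \leq \rho^{\Theta-\varepsilon}$ with $c := v(0)$ satisfying $|c| \leq 2$.

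To iterate, I would construct inductively constants $c_k$ with $|c_{k+1} - c_k| \leq 2M \rho^{k(\Theta-\varepsilon)}$ and $\sup_{B_{\rho^k}}|u - c_k| \leq M \rho^{k(\Theta-\varepsilon)}$ for a large constant $M$ to be chosen. At step $k$, the rescaled function
\[
u_k(x) := \frac{u(\rho^k x) - c_k}{M\, \rho^{k(\Theta-\varepsilon)}}
\]
satisfies $\sup_{B_1}|u_k| \leq 1$ and solves a nonlocal equation of the same type, with kernel $A_k(x,y) := A(\rho^k x,\rho^k y) \in \mathcal{L}_0(\lambda)$, locally translation invariant companion $\widetilde A_k \in \mathcal{L}_1(\lambda,B_1)$ still satisfying $\|A_k - \widetilde A_k\|_{L^\infty} \leq \delta$, rescaled nonlinearity $\Phi_k(t) := \Phi(M \rho^{k(\Theta-\varepsilon)} t)/(M \rho^{k(\Theta-\varepsilon)})$ obeying (\ref{PhiLipschitz}) and (\ref{PhiMonotone}) with the same $\lambda$, and right-hand side $\|f_k\|_{L^q(B_1)} = M^{-1} \rho^{k(2s - (\Theta-\varepsilon) - n/q)} \|f\|_{L^q(B_{\rho^k})} \leq \delta$, the exponent $2s - (\Theta-\varepsilon) - n/q$ being nonnegative precisely because $\Theta - \varepsilon \leq 2s - n/q$. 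Provided $u_k$ also inherits the tail bound of the single-scale hypothesis, applying the single-scale step to $u_k$ furnishes a new constant $\widetilde c_k$ with $|\widetilde c_k| \leq 2$; setting $c_{k+1} := c_k + M\rho^{k(\Theta-\varepsilon)}\widetilde c_k$ advances the induction.

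The main technical obstacle is verifying that $u_k$ meets the tail hypothesis of Lemma~\ref{approxLinf}. Unscaling gives
\[
\int_{\mathbb{R}^n \setminus B_1} \frac{|u_k(y)|}{|y|^{n+2s}}\,dy = \frac{\rho^{k(2s-(\Theta-\varepsilon))}}{M} \int_{\mathbb{R}^n \setminus B_{\rho^k}} \frac{|u(z)-c_k|}{|z|^{n+2s}}\,dz,
\]
and I would split the inner integral into $\mathbb{R}^n \setminus B_1$ (controlled by the original tail hypothesis on $u$ and a uniform bound on $|c_k|$ from telescoping) and the dyadic shells $B_{\rho^j}\setminus B_{\rho^{j+1}}$ for $0 \leq j \leq k-1$. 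On the $j$-th shell, the telescoped bound $|c_j - c_k| \leq \tfrac{2M}{1-\rho^{\Theta-\varepsilon}}\rho^{j(\Theta-\varepsilon)}$ combined with the inductive estimate $|u-c_j| \leq M\rho^{j(\Theta-\varepsilon)}$ gives $|u - c_k| \leq C(M,\rho,\varepsilon)\rho^{j(\Theta-\varepsilon)}$, while a polar coordinates computation gives $\int_{B_{\rho^j}\setminus B_{\rho^{j+1}}} |z|^{-n-2s}\,dz \leq C_{n,s,\rho}\rho^{-2sj}$. Summing the resulting geometric series (convergent because $\Theta-\varepsilon < 2s$) and multiplying by the prefactor yields a bound of the form $C(n,s,\rho,\varepsilon)/M$, which one arranges to be $\leq 1$ by fixing $M$ large once and for all; only then is $\delta = \delta(\varepsilon,n,s,\lambda,q)$ selected by applying Lemma~\ref{approxLinf} with this $M$ as upper bound. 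Running the same iteration at every base point $x_0 \in B_{1/2}$ (after translating the problem, noting that $\widetilde A$ being a function of $x-y$ on $B_1$ persists under translation on any subball) and interpolating between the dyadic oscillation decay at small scales and the uniform $L^\infty$ bound at large scales delivers $u \in C^{\Theta-\varepsilon}(\overline{B_{1/2}})$ together with the claimed uniform seminorm estimate.
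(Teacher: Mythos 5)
Your strategy (Caffarelli--Silvestre-style compactness plus geometric iteration) is the same in spirit as the paper's, but there is a genuine gap in the verification of the tail hypothesis, and the paper sidesteps it by a different bookkeeping choice.

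\textbf{The gap.} You claim that after summing the shell contributions and multiplying by the prefactor $\rho^{k(2s-(\Theta-\varepsilon))}/M$, the tail of $u_k$ is bounded by something of the form $C(n,s,\rho,\varepsilon)/M$, so that $M$ large forces it below $1$. This is not correct: the $M$'s cancel. Your own shell bound is $|u-c_k|\leq C(M,\rho,\varepsilon)\rho^{j(\Theta-\varepsilon)}$ with $C(M,\rho,\varepsilon)$ \emph{linear in $M$} (it comes from $|u-c_j|\leq M\rho^{j(\Theta-\varepsilon)}$ plus the telescoped $|c_j-c_k|\leq \tfrac{2M}{1-\rho^{\Theta-\varepsilon}}\rho^{j(\Theta-\varepsilon)}$). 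Inserting this and the shell measure $\sim\rho^{-2sj}$, then multiplying by $\rho^{k(2s-(\Theta-\varepsilon))}/M$, gives
\[
\Bigl(1+\tfrac{2}{1-\rho^{\Theta-\varepsilon}}\Bigr)\cdot\frac{\omega_{n-1}}{2s}\bigl(\rho^{-2s}-1\bigr)\cdot\frac{\rho^{2s-(\Theta-\varepsilon)}}{1-\rho^{2s-(\Theta-\varepsilon)}},
\]
which is \emph{independent of $M$} and is in general $\gg 1$; it even blows up as $\rho\to 0$ (behaving like $\rho^{-(\Theta-\varepsilon)}$). So choosing $M$ large does not salvage the induction, and choosing $\rho$ small makes things worse. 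This is not merely cosmetic: because the tail bound $M'$ you must feed into Lemma \ref{approxLinf} depends on $\rho$, while $\rho$ is chosen via $C_0$ which (through Theorem \ref{modC2sreg}) depends on the size of the tail of $v$, hence on $M'$, you are caught in a circularity that your argument does not resolve.

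\textbf{How the paper avoids this.} The paper's Step 1 centers every iterate at the \emph{fixed} value $u(0)$ rather than at a sequence $c_k$, so $w_k(0)=0$ at every stage and there is no telescoping sum to control. More importantly, the paper applies Theorem \ref{modC2sreg} with the \emph{strictly larger} exponent $\Theta-\varepsilon/2$, producing the intermediate bound $|w_k(x)|\leq 2\tau + C_9|x|^{\Theta-\varepsilon/2}$. Upon rescaling to $w_{k+1}$ this margin yields a uniform multiplicative factor $\rho^{\varepsilon/2}$, so that the sup bound and, crucially, the tail bound on $w_{k+1}$ are each $\leq (\text{universal})\cdot\rho^{\varepsilon/2}$. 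Since the target tail bound $M_0:=1+\int_{\mathbb R^n\setminus B_1}|y|^{-n-2s}\,dy$ is a fixed universal constant chosen before $\rho$, one simply picks $\rho$ so small that $(\text{universal})\cdot\rho^{\varepsilon/2}\leq M_0$, closing the induction with no circularity. You do fix an intermediate $\bar\alpha>\Theta-\varepsilon$ for the oscillation decay, but you never exploit this margin in the tail estimate, which is precisely where it is needed. If you replace the sequence $c_k$ by the fixed center $u(0)$ and replicate the paper's one-step tail recursion using the $\varepsilon/2$-margin, your argument would go through.
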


\begin{proof}
We divide the proof into two parts. \newline 
\textbf{Step 1: Regularity at the origin.}
In this step, our aim is to prove that for any $0<\varepsilon < \Theta$ and any $0<r<1$, there exists some small enough $\delta>0$ such that if $A$, $\widetilde A$, $f$ and $u$ are as above, then 
\begin{equation} \label{originest}
\sup_{x \in B_r} |u(x)-u(0)| \leq C_1 r^{\Theta-\varepsilon}
\end{equation}
for some constant $C_1=C_1(n,s,\lambda,\varepsilon)>0$. 
In order to accomplish this, we fix some $0<\varepsilon < \Theta$ and observe that it suffices to prove that there exist $0<\rho< \frac{1}{3}$ and $\delta>0$ such that if $A$, $\widetilde A$, $f$ and $u$ are as above, then for any $k \in \mathbb{N}_0$ we have
\begin{equation} \label{sufficientest}
\sup_{x \in B_{\rho^k}} |u(x)-u(0)| \leq 2 \rho^{k(\Theta-\varepsilon)}, \quad \int_{\mathbb{R}^n \setminus B_1} \frac{|u(\rho^k y)-u(0)|}{\rho^{k(\Theta-\varepsilon)} |y|^{n+2s}}dy \leq M_0,
\end{equation}
where $M_0:=1+\int_{\mathbb{R}^n \setminus B_1} \frac{dy}{|y|^{n+2s}}<\infty$.
Indeed, assume that (\ref{sufficientest}) were true. Since for any $0<r<1$ there exists some $k \in \mathbb{N}_0$ such that $\rho^{k+1}<r \leq \rho^k$, by the first inequality in (\ref{sufficientest}) we would arrive at
$$ \sup_{x \in B_r} |u(x)-u(0)| \leq \sup_{x \in B_{\rho^k}} |u(x)-u(0)| \leq 2 \rho^{k(\Theta-\varepsilon)} = \frac{2}{\rho^{\Theta-\varepsilon}} \rho^{(k+1)(\Theta-\varepsilon)} \leq \frac{2}{\rho^{\Theta-\varepsilon}}r^{\Theta-\varepsilon} ,$$
which would prove (\ref{originest}) with $C_1=\frac{2}{\rho^{\Theta-\varepsilon}}$. \newline 
In order to prove (\ref{sufficientest}), we proceed by induction. In the case when $k=0$, (\ref{sufficientest}) is true by the assumptions (\ref{boundsyzs}). \newline
Next, suppose that (\ref{sufficientest}) holds up to $k$ and let us prove that it is also true for $k+1$. Let $\tau>0$ to be chosen small enough and consider the corresponding $\delta=\delta(\tau,n,s,\lambda,q,M)>0$ given by Lemma \ref{approxLinf}, where $M:=2+M_0$. Assume that (\ref{boundsxzs}) is satisfied with respect to this $\delta$. Furthermore, define
$$ w_k(x):=\frac{u(\rho^kx)-u(0)}{\rho^{k(\Theta-\varepsilon)}}, \quad f_k(x):=\rho^{k(2s-(\Theta-\varepsilon))} f(\rho^k x)$$
and
$$ A_k(x,y):=A(\rho^kx,\rho^ky), \quad \widetilde A_k(x,y):=\widetilde{A}(\rho^kx,\rho^ky), \quad \Phi_k(t):=\frac{1}{\rho^{k(\Theta-\varepsilon)}} \Phi(\rho^{k(\Theta-\varepsilon)}t) .$$
We note that $A_k \in \mathcal{L}_0(\lambda)$, $\widetilde A_k \in \mathcal{L}_1 \left(\lambda,B_{\frac{1}{\rho^k}} \right ) \subset \mathcal{L}_1(\lambda,B_1)$ and that $\Phi_k$ satisfies (\ref{PhiLipschitz}) and (\ref{PhiMonotone}) with respect to $\lambda$. Moreover, $w_k$ belongs to $W^{s,2}(B_1) \cap L^1_{2s}(\mathbb{R}^n)$ and is a local weak solution of $L_{A_k}^{\Phi_k} w_k = f_k$ in $B_1$, while by (\ref{boundsxzs}) we have
$$||A_k-\widetilde A_k||_{L^\infty(\mathbb{R}^n \times \mathbb{R}^n)} = ||A-\widetilde A||_{L^\infty(\mathbb{R}^n \times \mathbb{R}^n)}\leq \delta $$
and
$$||f_k||_{L^q(B_1)} = \rho^{k(2s-(\Theta-\varepsilon))} \rho^{-k\frac{n}{q}} ||f||_{L^q(B_{\rho^k})} \leq ||f||_{L^q(B_1)} \leq \delta,$$
where we have also used that $\Theta \leq 2s-\frac{n}{q}$ and thus $k \left (2s-(\Theta-\varepsilon)-\frac{n}{q} \right ) \geq k\varepsilon \geq 0$. Moreover, by the induction hypothesis we have
\begin{equation} \label{boundswk}
||w_k||_{L^\infty(B_1)} \leq 2, \quad \int_{\mathbb{R}^n \setminus B_1} \frac{|w_k(y)|}{|y|^{n+2s}}dy \leq M_0.
\end{equation}
Therefore, by Lemma \ref{approxLinf} the unique weak solution $v_k \in W^{s,2}(B_1) \cap L^1_{2s}(\mathbb{R}^n)$ of
$$
\begin{cases} \normalfont
L_{\widetilde A_k}^{\Phi_k} v_k = 0 & \text{ in } B_{7/8} \\
v_k = w_k & \text{ a.e. in } \mathbb{R}^n \setminus B_{7/8}
\end{cases}
$$
satisfies
\begin{equation} \label{approxwk}
||w_k-v_k||_{L^\infty(B_{3/4})} \leq \tau.
\end{equation}
Together with the fact that $w_k(0)=0$, we obtain that for any $x \in B_{1/3}$ we have
\begin{equation} \label{B12}
\begin{aligned}
|w_k(x)| \leq & |w_k(x)-v_k(x)| + |v_k(0)-w_k(0)| + |v_k(x)-v_k(0)| \\
\leq & 2\tau+ [v_k]_{C^{\Theta-\varepsilon/2}(B_{1/3})} |x|^{\Theta-\varepsilon/2}.
\end{aligned}
\end{equation}
Our next goal is to prove that the right-hand side of the previous estimate is uniformly bounded by a constant that does not depend on $k$. In order to do so, we observe that since $\widetilde A_k \in \mathcal{L}_1(\lambda,B_1) \subset \mathcal{L}_1(\lambda,B_{2/3})$, by Theorem \ref{modC2sreg} we have
\begin{equation} \label{B14}
[v_k]_{C^{\Theta-\varepsilon/2}(B_{1/3})} \leq C_2 \left (||v_k||_{L^\infty(B_{2/3})}+ [v_k]_{W^{s,2}(B_{2/3})} + \int_{\mathbb{R}^n \setminus B_{2/3}} \frac{|v_k(y)|}{|y|^{n+2s}}dy \right ), 
\end{equation}
where $C_2=C_2(n,s,\lambda,\Theta,\varepsilon)>0$.
For the first term of the right-hand side, in view of (\ref{approxwk}) and (\ref{boundswk}) we have
$$ ||v_k||_{L^\infty(B_{2/3})} \leq ||v_k||_{L^\infty(B_{3/4})} \leq ||v_k-w_k||_{L^\infty(B_{3/4})} + ||w_k||_{L^\infty(B_{3/4})} \leq \tau + 2.$$
In order to estimate the tail term, we observe that by the same argument used in order to obtain (\ref{L2estx}), we have
$$ ||v_k-w_k||_{L^2(B_{7/8})} \leq C_3 \delta,$$
where $C_3=C_3(n,s,\lambda,q)>0$.
Together with the fact that $v_k=w_k$ in $\mathbb{R}^n \setminus B_{7/8}$, Lemma \ref{tail} and (\ref{boundswk}), we deduce
\begin{align*}
\int_{\mathbb{R}^n \setminus B_{2/3}} \frac{|v_k(y)|}{|y|^{n+2s}}dy \leq & \int_{\mathbb{R}^n \setminus B_{2/3}} \frac{|w_k(y)|}{|y|^{n+2s}}dy+\int_{B_{7/8} \setminus B_{2/3}} \frac{|v_k(y)-w_k(y)|}{|y|^{n+2s}}dy \\
\leq & C_4 \left ( ||w_k||_{L^1(B_{1})} + \int_{\mathbb{R}^n \setminus B_{1}} \frac{|w_k(y)|}{|y|^{n+2s}}dy + \int_{B_{7/8}} |v_k(y)-w_k(y)|dy \right ) \\ 
\leq & C_5 \left ( ||w_k||_{L^\infty(B_{1})}+ \int_{\mathbb{R}^n \setminus B_{1}} \frac{|w_k(y)|}{|y|^{n+2s}}dy+||v_k-w_k||_{L^2(B_{7/8})} \right ) \leq C_6,
\end{align*}
where $C_6=C_6(n,s,\lambda,q,\delta)>0$.
Finally, for the Sobolev seminorm by Theorem \ref{Cacc} and the above estimates we have
\begin{align*}
[v_k]_{W^{s,2}(B_{2/3})} \leq C_7 \left ( ||v_k||_{L^\infty(B_{3/4})}^2 + ||v_k||_{L^\infty(B_{3/4})}\int_{\mathbb{R}^n \setminus B_{3/4}} \frac{|v_k(y)|}{|y|^{n+2s}}dy \right ) \leq C_8, 
\end{align*}
where $C_7$ and $C_8$ do not depend on $k$. By combining the above estimates with (\ref{B12}) and (\ref{B14}), we obtain that for any $x \in B_{1/3}$ we have
\begin{equation} \label{D15}
|w_k(x)| \leq 2 \tau + C_9 |x|^{\Theta-\varepsilon/2}, 
\end{equation}
where again $C_9$ does not depend on $k$.
Next, define 
$$ w_{k+1}(x):=\frac{u(\rho^{k+1}x)-u(0)}{\rho^{{k+1}(\Theta-\varepsilon)}} = \frac{w_k(\rho x)}{\rho^{\Theta-\varepsilon}}. $$
By choosing $\tau$ small enough such that $2\tau<\rho^\Theta$, in view of (\ref{D15}), we obtain 
\begin{equation} \label{wk+1}
|w_{k+1}(x)| \leq 2 \tau \rho^{\varepsilon-\Theta} + C_9 \rho^{\varepsilon-\Theta} |\rho x|^{\Theta-\varepsilon/2} \leq (1+C_9 |x|^{\Theta-\varepsilon/2}) \rho^{\varepsilon /2} \quad \forall x \in B_\frac{1}{3 \rho}.
\end{equation}
In particular, by choosing $\rho$ small enough such that
$\rho \leq (1+C_9)^{-\frac{2}{\varepsilon}}$ and recalling that $\rho <1/3$,
we arrive at $||w_{k+1}||_{L^\infty(B_1)} \leq 1$. By definition of $w_{k+1}$ this is equivalent to 
$$ \sup_{x \in B_{\rho^{k+1}}} |u(x)-u(0)| \leq \rho^{(k+1)(\Theta-\varepsilon)},$$
which proves the first estimate in (\ref{sufficientest}) for $k+1$. \newline
In order to prove the second estimate in (\ref{sufficientest}) for $k+1$, we observe that (\ref{wk+1}) implies
\begin{align*}
\int_{B_\frac{1}{3\rho} \setminus B_1} \frac{|w_{k+1}(y)|}{|y|^{n+2s}}dy \leq & \rho^{\varepsilon/2} \int_{B_\frac{1}{3\rho} \setminus B_1} \frac{1+C_9 |y|^{\Theta-\varepsilon/2}}{|y|^{n+2s}}dy \\
\leq & (1+C_9) \rho^{\varepsilon/2} \int_{B_\frac{1}{3\rho} \setminus B_1} \frac{dy}{|y|^{n+2s+\varepsilon/2-\Theta}} \leq C_{10} \rho^{\varepsilon/2},
\end{align*}
where $C_{10}:=(1+C_9) \int_{\mathbb{R}^n \setminus B_1} \frac{dy}{|y|^{n+2s+\varepsilon/2-\Theta}}<\infty$ does not depend on $k$ and is finite because $2s+\varepsilon/2-\Theta \geq \frac{n}{q}+\varepsilon/2>0$.
Furthermore, by using a change of variables and the first bound in (\ref{boundswk}), we obtain
\begin{align*}
\int_{B_\frac{1}{\rho} \setminus B_\frac{1}{3\rho}} \frac{|w_{k+1}(y)|}{|y|^{n+2s}}dy = & \rho^{\varepsilon-\Theta+2s} \int_{B_1 \setminus B_{1/3}} \frac{|w_{k}(y)|}{|y|^{n+2s}}dy \leq 2 \rho^{\varepsilon/2} \int_{B_1 \setminus B_{1/3}} \frac{dy}{|y|^{n+2s}} \leq C_{11} \rho^{\varepsilon/2},
\end{align*}
where $C_{11}:=3^{n+2s} 2 |B_1|<\infty$.
Moreover, again by a change of variables and the second bound in (\ref{boundswk}), we deduce
$$ \int_{\mathbb{R}^n \setminus B_\frac{1}{\rho}} \frac{|w_{k+1}(y)|}{|y|^{n+2s}}dy = \rho^{\varepsilon-\Theta+2s} \int_{\mathbb{R}^n \setminus B_1} \frac{|w_{k}(y)|}{|y|^{n+2s}}dy \leq M_0 \rho^{\varepsilon/2}. $$
Note that in the last two estimates we also used that $\rho<1$ and that  $\varepsilon-\Theta+2s \geq \varepsilon/2 $. By combining the last three displays and choosing $\rho$ small enough such that 
$$ (C_{10}+C_{11}+M_0)\rho^{\varepsilon/2} \leq M_0,$$
we arrive at
$$ \int_{\mathbb{R}^n \setminus B_1} \frac{|w_{k+1}(y)|}{|y|^{n+2s}}dy \leq (C_{10}+C_{11}+M_0)\rho^{\varepsilon/2} \leq M_0, $$
which proves the second estimate in (\ref{sufficientest}) for $k+1$.
Therefore, for
$$ \rho < \min \left \{\frac{1}{3},(1+C_9)^{-\frac{2}{\varepsilon}},M_0^\frac{2}{\varepsilon} (C_{10}+C_{11}+M_0)^{-\frac{2}{\varepsilon}} \right \}, \quad \tau < \frac{\rho^\Theta}{2} $$
(\ref{sufficientest}) is true for any $k \in \mathbb{N}_0$, which in particular also proves (\ref{originest}) under the assumptions (\ref{boundsxzs}) and (\ref{boundsyzs}), where $\delta$ is chosen as above. \newline
\textbf{Step 2: Regularity in a ball.}
Next, we show the desired higher H\"older regularity in the whole ball $B_{1/2}$. We fix some $0<\varepsilon<\Theta$ and take the corresponding small enough $\delta$ from step 1. Fix $z \in B_{1/2}$, set $L:=2^{n+1}(1+|B_1|)$ and define 
$$ u_{z}(x):=u \left (\frac{x}{2}+z \right )/L, \quad f_{z}(x):= \frac{2^{-2s}}{L} f \left (\frac{x}{2} +z \right )$$
and
$$ A_{z}(x,y):=A \left (\frac{x}{2} +z,\frac{y}{2} +z \right ), \quad \widetilde A_{z}(x,y):=\widetilde{A}\left (\frac{x}{2} +z,\frac{y}{2} +z \right ), \quad \Phi_L(t):=\frac{1}{L} \Phi(L t) .$$
We note that $A_{z} \in \mathcal{L}_0(\lambda)$, $\widetilde A_{z} \in \mathcal{L}_1 \left(\lambda,B_1 \right )$ and that $\Phi_L$ satisfies (\ref{PhiLipschitz}) and (\ref{PhiMonotone}) with respect to $\lambda$. Moreover, $u_{z}$ is a local weak solution of $L_{A_{z}}^{\Phi_L} u_{z} = f_{z}$ in $B_1$ and by (\ref{boundsxzs}) we have
$$||A_z-\widetilde A_z||_{L^\infty(\mathbb{R}^n \times \mathbb{R}^n)} = ||A-\widetilde A||_{L^\infty(\mathbb{R}^n \times \mathbb{R}^n)} \leq \delta $$
and
$$||f_z||_{L^q(B_1)} = \frac{2^{n/q-2s}}{L} ||f||_{L^q(B_{1/2}(z))} \leq ||f||_{L^q(B_1)} \leq \delta.$$
Additionally, by (\ref{boundsyzs}) we have
$$\sup_{x \in B_1} |u_z(x)| \leq \sup_{x \in B_{1/2}(z)} |u(x)| \leq \sup_{x \in B_1} |u(x)| \leq 1 $$ and together with Lemma \ref{tail}
\begin{align*}
\int_{\mathbb{R}^n \setminus B_1} \frac{|u_z(y)|}{|y|^{n+2s}}dy = & \frac{2^{-2s}}{L} \int_{\mathbb{R}^n \setminus B_{1/2}(z)} \frac{|u(y)|}{|y-z|^{n+2s}}dy \\
\leq & \frac{2^n}{L} \int_{\mathbb{R}^n \setminus B_1} \frac{|u(y)|}{|y|^{n+2s}}dy + \frac{2^n}{L} ||u||_{L^1(B_1)} \\
\leq & \frac{2^n}{L} \int_{\mathbb{R}^n \setminus B_1} \frac{|u(y)|}{|y|^{n+2s}}dy + \frac{2^n |B_1|}{L} ||u||_{L^\infty(B_1)} \leq 1.
\end{align*}
Therefore, we are in the position to apply step 1 to $u_z$, which yields
$$ \sup_{x \in B_r} |u_z(x)-u_z(0)| \leq C_1 r^{\Theta-\varepsilon}, \quad 0<r<1.$$
By rewriting this estimate in terms of $u$, for any $z \in B_{1/2}$ we obtain
\begin{equation} \label{est14}
\sup_{x \in B_r(z)} |u(x)-u(z)| \leq C_1 L r^{\Theta-\varepsilon}, \quad 0<r<\frac{1}{2}.
\end{equation}
Now fix two points $x,y \in B_{1/2}$. Then applying (\ref{est14}) with $r=\frac{|x-y|}{2}<1/2$ and $z=(x+y)/2$ yields
\begin{align*}
|u(x)-u(y)| \leq |u(x)-u(z)|+|u(y)-u(z)| \leq & 2 \sup_{\omega \in B_r(z)} |u(w)-u(z)| \\
\leq & 2C_1 L r^{\Theta-\varepsilon} \leq 2C_1 L |x-y|^{\Theta-\varepsilon},
\end{align*}
which proves the desired H\"older regularity of $u$.
\end{proof}
In order to obtain the estimate (\ref{Hoeldest}) in our main results with its precise scaling, we now first prove Theorem \ref{C2srega} at scale 1 by using scaling and covering arguments. The general case will then follow by another scaling argument.
\begin{thm} \label{C2sregy}
	Let $\lambda \geq 1$ and $f \in L^q(B_1)$ for some $q>\frac{n}{2s}$.
	Consider a kernel coefficient $A \in \mathcal{L}_0(\lambda)$ and suppose that $\Phi$ satisfies (\ref{PhiLipschitz}) and (\ref{PhiMonotone}) with respect to $\lambda$. Fix some $0<\alpha<\min \big \{2s-\frac{n}{q},1 \big\}$. Then there exists some small enough $\delta=\delta(\alpha,n,s,\lambda,q)>0$, such that if for any $z \in B_1$, there exists some small enough radius $r_{z}>0$ and some $A_{z} \in \mathcal{L}^1(\lambda,B_{r_{z}}(z))$ such that $$||A-A_{z}||_{L^\infty(B_{r_z}(z) \times B_{r_z}(z))} \leq \delta,$$
	then for any local weak solution $u \in W^{s,2}(B_1) \cap L^1_{2s}(\mathbb{R}^n)$ of the equation $L_{A}^\Phi u = f$ in $B_1$, we have $u \in C^\alpha(\overline B_\sigma)$ and
	\begin{equation} \label{Hoeldest4}
	[u]_{C^\alpha(B_{\sigma})} \leq C \bigg ( ||u||_{L^2(B_1)} + \int_{\mathbb{R}^n \setminus B_{1}} \frac{|u(y)|}{|y|^{n+2s}}dy +  ||f||_{L^q(B_1)} \bigg ),
	\end{equation}
	where $C=C(n,s,\lambda,\alpha,\sigma,q,\{r_z\}_{z \in B_1})>0$.
\end{thm}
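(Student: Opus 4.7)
The plan is to reduce the general case to Proposition \ref{nonhomreg1} by rescaling the equation around each point of $\overline{B_\sigma}$, and then to combine the resulting local H\"older estimates by a compactness and covering argument. Fix $\varepsilon := \min\{2s - n/q,1\} - \alpha > 0$ and let $\delta = \delta(\varepsilon,n,s,\lambda,q) > 0$ be the constant supplied by Proposition \ref{nonhomreg1}; this will be the $\delta$ appearing in the statement. Denote by $K$ the quantity in parentheses on the right-hand side of (\ref{Hoeldest4}). Theorem \ref{finnish} applied on $B_1$ yields
\[
\|u\|_{L^\infty(B_{(1+\sigma)/2})} \leq C_0 K \qquad \text{with } C_0 = C_0(n,s,\lambda,q,\sigma).
\]

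Next I would rescale at a generic point $z \in \overline{B_\sigma}$. Select the radius $r_z > 0$ and the kernel coefficient $A_z \in \mathcal{L}_1(\lambda, B_{r_z}(z))$ furnished by the hypothesis, shrinking $r_z$ if necessary so that $B_{r_z}(z) \subset B_{(1+\sigma)/2}$. Set $A^{(z)}(x,y) := A(r_zx+z, r_zy+z)$ and define $\widetilde A$ to coincide with $A_z(r_zx+z, r_zy+z)$ on $B_1 \times B_1$ and with $A^{(z)}$ elsewhere; then $\widetilde A \in \mathcal{L}_1(\lambda, B_1)$, while $\|A^{(z)} - \widetilde A\|_{L^\infty(\mathbb{R}^n \times \mathbb{R}^n)} = \|A - A_z\|_{L^\infty(B_{r_z}(z) \times B_{r_z}(z))} \leq \delta$. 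Choose a normalising factor $M_z$ as a suitable combination of $C_0 K$, $\delta^{-1} r_z^{2s-n/q}\|f\|_{L^q(B_1)}$ and $r_z^{-n}\bigl(\|u\|_{L^1(B_1)} + \int_{\mathbb{R}^n \setminus B_1}\frac{|u(y)|}{|y|^{n+2s}}dy\bigr)$, and put $u_z(x) := u(r_zx + z)/M_z$, $f_z(x) := r_z^{2s} f(r_zx + z)/M_z$, and $\Phi_{M_z}(t) := \Phi(M_zt)/M_z$. A change of variables shows that $u_z$ is a local weak solution of $L^{\Phi_{M_z}}_{A^{(z)}} u_z = f_z$ in $B_1$, the rescaled nonlinearity $\Phi_{M_z}$ still satisfies (\ref{PhiLipschitz}) and (\ref{PhiMonotone}) with the same $\lambda$, and using Lemmas \ref{tailestz} and \ref{tail} to translate the tail of $u_z$ back to $u$, the choice of $M_z$ ensures $\|u_z\|_{L^\infty(B_1)} \leq 1$, $\int_{\mathbb{R}^n \setminus B_1}\frac{|u_z(y)|}{|y|^{n+2s}}dy \leq 1$ and $\|f_z\|_{L^q(B_1)} \leq \delta$. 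Proposition \ref{nonhomreg1} therefore delivers $[u_z]_{C^\alpha(B_{1/2})} \leq C_1(n,s,\lambda,q)$, which unscales to
\[
[u]_{C^\alpha(B_{r_z/2}(z))} \leq C_1 M_z r_z^{-\alpha} \leq C(n,s,\lambda,q,\sigma,r_z)\, K.
\]

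To conclude I would argue by covering. Since $\overline{B_\sigma}$ is compact and covered by $\{B_{r_z/2}(z)\}_{z \in \overline{B_\sigma}}$, extract a finite subcover $\{B_{r_{z_j}/2}(z_j)\}_{j=1}^N$. For two points $x,y \in B_\sigma$ lying in a common $B_{r_{z_j}/2}(z_j)$, the preceding local estimate at $z_j$ yields the desired $C^\alpha$ bound. For $x,y$ not contained in a common patch, the Lebesgue number $\eta > 0$ of the subcover forces $|x-y| \geq \eta$, so the trivial bound $|u(x)-u(y)| \leq 2\|u\|_{L^\infty(B_{(1+\sigma)/2})} \leq 2C_0 K$ combined with $|x-y|^\alpha \geq \eta^\alpha$ furnishes a $C^\alpha$-type estimate with constant proportional to $\eta^{-\alpha} K$. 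Taking the maximum of the constants arising from the two cases produces (\ref{Hoeldest4}), with $C$ depending on $n,s,\lambda,\alpha,q,\sigma$ and on the finitely many radii $\{r_{z_j}\}_j$, hence on the full family $\{r_z\}_{z \in B_1}$. The principal technical step is the bookkeeping for $M_z$ in the second paragraph: it must simultaneously control the $L^\infty$ norm of $u_z$, its nonlocal tail and the $L^q$ norm of $f_z$, and one has to verify that all three ingredients are bounded by $K$, via Theorem \ref{finnish} and Lemma \ref{tail}, without introducing a circular dependence.
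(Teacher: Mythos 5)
Your proposal is correct and follows essentially the same path as the paper: rescale around each $z\in\overline B_\sigma$ so that Proposition \ref{nonhomreg1} applies (using Theorem \ref{finnish} to bound $\|u\|_{L^\infty}$ and Lemma \ref{tail} to control the rescaled tail, and gluing $A_z$ with $A$ to produce the globally defined $\widetilde A\in\mathcal L_1(\lambda,B_1)$ with $\|A^{(z)}-\widetilde A\|_{L^\infty}\le\delta$), then patch the local $C^\alpha$ estimates together by compactness. The only cosmetic differences are in the precise normalisation $M_z$ — you use global quantities plus the $L^\infty$ bound from Theorem \ref{finnish}, whereas the paper uses the local sup, local tail, and local $L^q$ of $f$, subsequently dominated by the global quantity via Theorem \ref{finnish} — and in the dichotomy step, where you invoke a Lebesgue number of the finite subcover while the paper covers by $B_{r_{z_i}/4}(z_i)$ and splits according to whether $|x-y|$ is below or above $r_{\min}/4$; both give the same conclusion with the same kind of constant dependence.
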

\begin{proof}
Fix $\alpha \in (0,\Theta)$, where as before $\Theta=\min \left \{ 2s-\frac{n}{q},1 \right \}$, set $\varepsilon = \Theta-\alpha$ and let $\delta=\delta(\varepsilon,n,s,\lambda,q)>0$ be given by Proposition \ref{nonhomreg1}. We need to prove that $u \in C^{\Theta-\varepsilon}_{loc}(\overline B_\sigma)$. Let $\delta=\delta(\varepsilon,n,s,\lambda,q)>0$ be the corresponding $\delta$ given by Proposition \ref{nonhomreg1} and fix some $\sigma \in (0,1)$.
Fix some $z \in \overline B_{\sigma}$. Then by assumption, there exists some small enough radius $r_z \in (0,1)$ with $B_{2r_z}(z) \subset B_1$ and some kernel coefficient $A_z \in \mathcal{L}_1(\lambda,B_{r_z}(z))$ such that
$$
	||A- A_z||_{L^\infty(B_{r_z}(z) \times B_{r_z}(z))} \leq \delta.
$$
Then the kernel coefficient
$$ \widetilde A (x,y) := \begin{cases} \normalfont
	A_z(x,y) & \text{if } (x,y) \in B_{r_z}(z) \times B_{r_z}(z) \\
	A(x,y) & \text{if } (x,y) \notin B_{r_z}(z) \times B_{r_z}(z)
\end{cases} $$
also belongs to $\mathcal{L}_1(\lambda,B_{r_z}(z))$ and satisfies
\begin{equation} \label{Asmall8}
	||A- \widetilde A||_{L^\infty(\mathbb{R}^n \times \mathbb{R}^n)} \leq \delta.
\end{equation}
In the case when $u \equiv 0$, the desired H\"older regularity trivially holds. Otherwise, set
$$ M_{z}:= \sup_{x \in B_{r_z}(z)} |u(x)| + r_z^{2s} \int_{\mathbb{R}^n \setminus B_{r_z}(z)} \frac{|u(y)|}{|z-y|^{n+2s}}dy + \frac{r_z^{2s-n/q}}{\delta} ||f||_{L^q(B_{r_z}(z))}>0. $$ Consider the scaled functions $u_1 \in W^{s,2}(B_{1}) \cap L^1_{2s}(\mathbb{R}^n)$ and $f_1 \in L^q(B_1)$ given by
$$ u_1(x):=\frac{1}{M_{z}}u(r_z x+z), \quad f_1(x):=\frac{r^{2s}}{M_z} f(r_z x+z)$$
and also 
$$A_1(x,y):= A(r_z x+z,r_z y+z), \quad \widetilde A_1(x,y):= \widetilde A(r_z x+z,r_z y+z), \quad \Phi_1(t):= \frac{1}{M_{z}} \Phi(M_{z} t).$$
We note that $u_1$ is a local weak solution of $L_{A_1}^{\Phi_1} u_1 = f_1$ in $B_1$. Moreover, observe that $A_1 \in \mathcal{L}_0(\lambda)$ and $\widetilde A_1 \in \mathcal{L}_1(\lambda,B_1)$, while $\Phi_1$ satisfies (\ref{PhiLipschitz}) and (\ref{PhiMonotone}) with respect to $\lambda$. Furthermore, by using changes of variables it is easy to verify that $u_1$ and $f_1$ satisfy
\begin{equation} \label{smallu2}
\sup_{x \in B_1} |u_1(x)| \leq 1, \quad \int_{\mathbb{R}^n \setminus B_1} \frac{|u_1(y)|}{|y|^{n+2s}}dy \leq 1, \quad ||f_1||_{L^q(B_1)} \leq \delta,
\end{equation}
while (\ref{Asmall8}) implies that
\begin{equation} \label{Asmall9}
||A_1-\widetilde A_1||_{L^\infty(\mathbb{R}^n \times \mathbb{R}^n)} \leq \delta.
\end{equation}
Therefore, in view of (\ref{smallu2}) and (\ref{Asmall9}) the assumptions (\ref{boundsxzs}) and (\ref{boundsyzs}) from Proposition \ref{nonhomreg1} are verified with respect to $u_1,f_1,A_1$ and $\widetilde A_1$, so that by Proposition \ref{nonhomreg1} we obtain 
$$[u_1]_{C^{\Theta-\varepsilon}(B_{1/2})} \leq C_1(n,s,\lambda,q).$$
By rescaling and then using Theorem \ref{finnish}, we arrive at the estimate
\begin{equation} \label{preestx}
\begin{aligned}
[u]_{C^{\Theta-\varepsilon}(B_{r_z/2}(z))} \leq & \frac{C_1}{r_z^{\Theta-\varepsilon}} \bigg ( \sup_{x \in B_{r_z}(z)} |u(x)| + r_z^{2s} \int_{\mathbb{R}^n \setminus B_{r_z}(z)} \frac{|u(y)|}{|z-y|^{n+2s}}dy \\ & + \frac{r_z^{2s-n/q}}{\delta} ||f||_{L^q(B_{r_z}(z))} \bigg )\\
\leq & \frac{C_2}{r_z^{\Theta-\varepsilon}} \bigg ( r_z^{-n/2}||u||_{L^2(B_{2r_z}(z))} + r_z^{2s} \int_{\mathbb{R}^n \setminus B_{r_z}(z)} \frac{|u(y)|}{|z-y|^{n+2s}}dy \\ & + r_z^{2s-n/q} ||f||_{L^q(B_{2r_z}(z))} \bigg ),
\end{aligned}
\end{equation}
where $C_2=C_2(n,s,\lambda,q,\Theta,\varepsilon)>0$. Since $\left \{B_{r_z/4}(z) \right \}_{z \in \overline B_\sigma}$ is an open covering of $\overline B_{\sigma}$ and $\overline B_{\sigma}$ is compact, there exists a finite subcover $\left \{B_{r_{z_i}/4}(z_i) \right \}_{i=1}^N$ of $\overline B_{\sigma}$ and hence of $B_{\sigma}$. Set $$r_{\textnormal{min}}:= \min_{i=1,...,N}r_{z_i}>0.$$ 
Fix $x,y \in B_\sigma$ with $x \neq y$. Then $x \in B_{r_{z_i}/4}(z_i)$ for some $i=1,...,N$.
If $|x-y| < r_{\textnormal{min}}/4$, then in particular $y \in B_{r_{z_i}/2}(z_i)$, so that by (\ref{preestx}) and Lemma \ref{tail} we have
\begin{align*}
\frac{|u(x)-u(y)|}{|x-y|^{\Theta-\varepsilon}} \leq &  [u]_{C^{\Theta-\varepsilon}(B_{r_{z_i}/2}(z))} \\ \leq & C_3 \bigg ( ||u||_{L^2(B_{2r_{z_i}}(z_i))} + \int_{\mathbb{R}^n \setminus B_{r_{z_i}}(z_i)} \frac{|u(y)|}{|z_i-y|^{n+2s}}dy + ||f||_{L^q(B_{2r_{z_i}}(z_i))} \bigg ) \\
\leq & C_4 \bigg ( ||u||_{L^2(B_1)} + \int_{\mathbb{R}^n \setminus B_{1}} \frac{|u(y)|}{|y|^{n+2s}}dy + ||f||_{L^q(B_1)} \bigg ),
\end{align*}
where $C_3$ and $C_4$ depend only on $n,s,\lambda,q,\Theta,\varepsilon$ and $r_{\textnormal{min}}$.
If $|x-y| \geq r_{\textnormal{min}}/4$, then in view of Theorem \ref{finnish} and Lemma \ref{tail}, we have
\begin{align*}
\frac{|u(x)-u(y)|}{|x-y|^{\Theta-\varepsilon}} \leq & 2 \left (\frac{4}{r_{\textnormal{min}}} \right )^{\Theta-\varepsilon} \sup_{x \in B_\sigma} |u(x)| \\
\leq & C_5 \left (||u||_{L^2(B_1)} + \int_{\mathbb{R}^n \setminus B_{\sigma}} \frac{|u(y)|}{|y|^{n+2s}}dy + ||f||_{L^q(B_1)} \right ) \\
\leq & C_6 \left (||u||_{L^2(B_1)} + \int_{\mathbb{R}^n \setminus B_{1}} \frac{|u(y)|}{|y|^{n+2s}}dy + ||f||_{L^q(B_1)} \right ),
\end{align*}
where $C_5$ and $C_6$ depend only on $n,s,\lambda,q,\Theta,\varepsilon,\sigma$ and $r_{\textnormal{min}}$.
Recalling that $\alpha=\Theta-\varepsilon$, combining the above estimates now proves the estimate (\ref{Hoeldest4}) and in particular $u \in C^\alpha(\overline B_\sigma)$.
\end{proof}

\begin{proof}[Proof of Theorem \ref{C2srega}]
Fix some $0<\alpha<\min \big \{2s-\frac{n}{q},1 \big\}$ and let $\delta=\delta(\alpha,n,s,\lambda,q)>0$ be given by Theorem \ref{C2sregy}. Fix $x_0 \in \Omega$ and $R>0$ such that $B_R(x_0) \Subset \Omega$, so that by assumption for any $z^\prime \in B_R(x_0)$, there is some small enough radius $r_{z^\prime}>0$ and some $A_{z^\prime} \in \mathcal{L}^1(\lambda,B_{r_{z^\prime}}(z^\prime))$ such that
$$
	||A-A_{z^\prime}||_{L^\infty(B_{r_{z^\prime}}(z^\prime) \times B_{r_{z^\prime}}(z^\prime))} \leq \delta.
$$
Consider the functions $u_1 \in W^{s,2}(B_{1}) \cap L^1_{2s}(\mathbb{R}^n)$ and $f_1 \in L^q(B_1)$ given by
$$ u_1(x):=u(Rx+x_0), \quad f_1(x):=R^{2s} f(Rx+x_0)$$
and also 
$$A_1(x,y):= A(Rx+x_0,Ry+x_0), \quad (A_1)_z(x,y):= A_{Rz+x_0}(Rx+x_0,Ry+x_0), \text{ } z \in B_1,$$ 
where $A_{Rz+x_0}$ exists for any $z \in B_1$ since in this case we have $Rz+x_0 \in B_R(x_0)$.
We note that for any $z \in B_1$ and $r_z:=r_{Rz+x_0}/R>0$, we have $(A_1)_{z} \in \mathcal{L}^1(\lambda,B_{r_z}(z))$ and $$||A_1-(A_1)_{z}||_{L^\infty(B_{r_z}(z) \times B_{r_z}(z))} \leq \delta.$$ In addition, $u_1$ is a local weak solution of $L_{A_1}^{\Phi} u_1 = f_1$ in $B_1$. Therefore, by Theorem \ref{C2sregy} along with some changes of variables, for any $\sigma \in (0,1)$ we obtain the estimate
\begin{align*}
	R^{\alpha}[u]_{C^\alpha(B_{\sigma R}(x_0))} = & [u_1]_{C^\alpha(B_{\sigma})} \\
	\leq & C \bigg ( ||u_1||_{L^2(B_1)} + \int_{\mathbb{R}^n \setminus B_{1}} \frac{|u_1(y)|}{|y|^{n+2s}}dy +  ||f_1||_{L^q(B_1)} \bigg ) \\ = & C \bigg ( R^{-\frac{n}{2}} ||u||_{L^2(B_R(x_0))} + R^{2s} \int_{\mathbb{R}^n \setminus B_{R}(x_0)} \frac{|u(y)|}{|x_0-y|^{n+2s}}dy \\ & + R^{2s-\frac{n}{q}} ||f||_{L^q(B_R(x_0))} \bigg ),
\end{align*}
which proves the estimate (\ref{Hoeldest}). Furthermore, since $x_0 \in \Omega$ is arbitrary, we in particular obtain that $u \in C^\alpha_{loc}(\Omega)$.
\end{proof}	

\begin{proof}[Proof of Theorem \ref{C2sreg}]
Fix some $0<\alpha<\min \big \{2s-\frac{n}{q},1 \big\}$ and let $\delta=\delta(\alpha,n,s,\lambda,q)>0$ be given by Theorem \ref{C2srega}.
Fix some $R>0$ and some $x_0 \in \Omega$ with $B_R(x_0) \Subset \Omega$. Since $A$ satisfies (\ref{contkernel}) in $\Omega$ with respect to some $\varepsilon>0$, there exists some small enough $r_\delta>0$ such that
\begin{equation} \label{Asmall5}
	\sup_{\substack{_{x,y \in B_R(x_0)}\\{|x-y| \leq \varepsilon}}} |A(x+h,y+h)-A(x,y)| \leq \delta \quad \forall h \in B_{r_\delta}. 
\end{equation}
Now fix some $z \in B_R(x_0)$ and some small enough radius $r_z \in (0,1)$ such that $r_z \leq \min\{\varepsilon/2,r_\delta\}$ and $B_{r_z}(z) \subset B_R(x_0)$. Then for all $x,y \in B_{r_z}(z)$ we have $z-y \in B_{r_{\delta}}$ and $z-x \in B_{r_{\delta}}$, so that (\ref{Asmall5}) implies
$$ \sup_{x,y \in B_{r_z}(z)} |A(x-y+z,z)-A(x,y)| \leq \delta, \quad \sup_{x,y \in B_{r_z}(z)} |A(z,y-x+z)-A(x,y)| \leq \delta. $$
Therefore, by additionally taking into account the symmetry of $A$, we see that the kernel coefficient defined by
$$ A_z(x,y) :=
	\frac{1}{2} \left (A(x-y+z,z)+A(y-x+z,z) \right ) $$
satisfies 
$$ ||A-A_z||_{L^\infty(B_{r_z}(z) \times B_{r_z}(z))} \leq \delta
$$
and clearly belongs to the class $\mathcal{L}_1(\lambda,B_{r_z}(z))$. Since $z \in B_R(x_0)$ is arbitrary, all assumptions from Theorem \ref{C2srega} are satisfied with $\Omega$ replaced by $B_R(x_0)$. Therefore, by Theorem \ref{C2srega} we see that the estimate (\ref{Hoeldest}) holds in any ball $B_R(x_0) \Subset \Omega$. In addition, since $x_0 \in \Omega$ is arbitrary, we obtain that $u \in C^\alpha_{loc}(\Omega)$.
\end{proof}

\bibliographystyle{amsplain}

\begin{thebibliography}{10}
	
	\bibitem{BL}
	L. Brasco and E. Lindgren, \emph{Higher {S}obolev regularity for the fractional {$p$}-{L}aplace
		equation in the superquadratic case}, Adv. Math. \textbf{304} (2017), 300--354.
	
	\bibitem{BLS}
	L. Brasco, E. Lindgren and A. Schikkora, \emph{Higher {H}\"{o}lder regularity for the fractional
		{$p$}-{L}aplacian in the superquadratic case}, Adv. Math. \textbf{338} (2018), 782--846.
	
	\bibitem{BP}
	L. Brasco and E. Parini, \emph{The second eigenvalue of the fractional {$p$}-{L}aplacian}, Adv. Calc. Var. \textbf{9} (2016), no.~5, 323--355.
	
	\bibitem{BS}
	L. Brasco and F. Santambrogio, \emph{A sharp estimate \`a la Calder\'{o}n-Zygmund for the $p$-Laplacian}, Commun. Contemp. Math. \textbf{20} (2018), no.~3, 521--573.
	
	\bibitem{CSa}
	L. Caffarelli and L. Silvestre, \emph{Regularity results for nonlocal equations by approximation}, Arch. Ration. Mech. Anal. \textbf{200} (2011), no.~1, 59--88.
	
	\bibitem{Stinga}
	L. Caffarelli and P. Stinga, \emph{Fractional elliptic equations, {C}accioppoli estimates and regularity}, Ann. Inst. H. Poincar\'{e} Anal. Non
	Lin\'{e}aire \textbf{33} (2016), no.~3, 767--807.
	
	\bibitem{Cozzi}
	M. Cozzi, \emph{Interior regularity of solutions of non-local equations in
		{S}obolev and {N}ikol'skii spaces}, Ann. Mat. Pura Appl. (4) \textbf{196}
	(2017), no.~2, 555--578.
	
	\bibitem{finnish}
	A. Di~Castro, T. Kuusi and G. Palatucci, \emph{Local behavior of
		fractional {$p$}-minimizers}, Ann. Inst. H. Poincar\'{e} Anal. Non
	Lin\'{e}aire \textbf{33} (2016), no.~5, 1279--1299.
	
	\bibitem{Hitch}
	E. Di~Nezza, G. Palatucci and E. Valdinoci,
	\emph{Hitchhiker's guide to the fractional {S}obolev spaces}, Bull. Sci.
	Math. \textbf{136} (2012), no.~5, 521--573.
	
	\bibitem{DongKim}
	H. Dong and D. Kim, \emph{On {$L_p$}-estimates for a class of non-local elliptic
		equations}, J. Funct. Anal. \textbf{262} (2012), no.~3, 1166--1199.
	
	\bibitem{Fall1}
	M. Fall, \emph{Regularity results for nonlocal equations and applications}, preprint, arXiv: 1806.09139 (2018).
	
	\bibitem{Fall}
	M. Fall, \emph{Regularity estimates for nonlocal {S}chr\"{o}dinger equations}, Discrete Contin. Dyn. Syst.
	\textbf{39} (2019), no.~3, 1405--1456.
	
	\bibitem{Giaq}
	M. Giaquinta and L. Martinazzi, \emph{An introduction to the regularity theory for elliptic systems,
	harmonic maps and minimal graphs}, Lecture Notes. Scuola Normale Superiore di Pisa (New Series), \textbf{34}, Second edition, (2012)

	\bibitem{Grubb}
	G. Grubb, \emph{Fractional {L}aplacians on domains, a development of
		{H}\"{o}rmander's theory of {$\mu$}-transmission
		pseudodifferential operators}, Adv. Math.
	\textbf{268} (2015), 478--528.
	
	\bibitem{Kassmann}
	M. Kassmann, \emph{A priori estimates for integro-differential operators
		with measurable kernels}, Calc. Var. Partial Differential Equations
	\textbf{34} (2009), no.~1, 1--21.
	
	\bibitem{existence}
	J. Korvenp\"{a}\"{a}, T. Kuusi and G. Palatucci, \emph{The obstacle problem for nonlinear integro-differential
	operators}, Calc. Var. Partial Differential Equations \textbf{55} (2016), no.~3, Art. 63, 29.
	
	\bibitem{mdata}
	T. Kuusi, G. Mingione and Y. Sire, \emph{Nonlocal equations with measure data}, Comm. Math. Phys. \textbf{337} (2015), no.~3, 1317--1368.
	
	\bibitem{selfimpro}
	T. Kuusi, G. Mingione and Y. Sire, \emph{Nonlocal self-improving
		properties}, Anal. PDE \textbf{8} (2015), no.~1, 57--114.
	
	\bibitem{Peral}
	T. Leonori, I. Peral, A. Primo and F. Soria, \emph{Basic
		estimates for solutions of a class of nonlocal elliptic and parabolic
		equations}, Discrete Contin. Dyn. Syst. \textbf{35} (2015), no.~12,
	6031--6068.
	
	\bibitem{Lind}
	P. Lindqvist, 
	\emph{Notes on the $p$-Laplace equation}, Report. University of Jyv\"askyl\"a, Department of Mathematics and Statistics, \textbf{102} (2006).
	
	\bibitem{MSY}
	T. Mengesha, A. Schikorra and S. Yeepo, \emph{Calderon-Zygmund type estimates for nonlocal PDE with H\"older continuous kernel}, preprint, arXiv: 2001.11944 (2020).
	
	\bibitem{Me}
	S. Nowak,
	\emph{$H^{s,p}$ regularity theory for a class of nonlocal elliptic
		equations}, Nonlinear Anal. \textbf{195} (2020), 111730.
	
	\bibitem{NonlocalGeneral}
	X. Ros-Oton and J. Serra, \emph{Regularity theory for general stable
		operators}, Journal of Differential Equations \textbf{260} (2016), no.~12,
	8675 -- 8715.
	
	\bibitem{Schikorra}
	A. Schikorra, \emph{Nonlinear commutators for the fractional {$p$}-{L}aplacian and
		applications}, Math. Ann. \textbf{366} (2016), no.~1-2, 695--720.
	
	\bibitem{Silvestre}
	L. Silvestre, \emph{H\"{o}lder estimates for solutions of integro-differential equations
		like the fractional Laplace}, Indiana Univ. Math. J. \textbf{55} (2006), no.~3, 1155--1174.
	
\end{thebibliography}

\end{document}